		\crefname{lemma}{Lemma}{Lemmas}
		\crefname{theorem}{Theorem}{Theorems}
		\crefname{prop}{Proposition}{Propositions}
		\crefname{cor}{Corollary}{Corollaries}
\definecolor{white}{rgb}{1,1,1}
\definecolor{mygreen}{rgb}{0,0.4,0}
\definecolor{light_gray}{rgb}{0.97,0.97,0.97}
\definecolor{mykey}{rgb}{0.117,0.403,0.713}
\newlength\inwd
\ttfamily\color{mykey},
		\newtheoremstyle{1}
		{6pt} 
		{0pt} 
		{\itshape} 
		{} 
		{\bfseries} 
		{.} 
		{.5em} 
		{} 
		\newtheoremstyle{2}
		{6pt} 
		{0pt} 
		{} 
		{} 
		{\bfseries} 
		{.} 
		{.5em} 
		{} 
	\theoremstyle{1}
		\newtheorem{theorem}{Theorem}
		\newtheorem{lemma}[theorem]{Lemma}
		\newtheorem{prop}[theorem]{Proposition}
		\newtheorem{cor}[theorem]{Corollary}
    \theoremstyle{definition}
		\newtheorem{defn}[theorem]{Definition}
		\newtheorem{remark}[theorem]{Remark}
		\newtheorem{notation}[theorem]{Notation}
	\numberwithin{equation}{section}
		\numberwithin{lemma}{section}
		\numberwithin{theorem}{section}
    \numberwithin{prop}{section}
	\numberwithin{cor}{section}
		\numberwithin{defn}{section}
		\numberwithin{remark}{section}
		\numberwithin{example}{section}
		\numberwithin{assumption}{section}
    \numberwithin{conjecture}{section}
    \numberwithin{notation}{section}
        \newcommand{\F}{\mathbb{F}}
		\newcommand{\Q}{\mathbb{Q}}
		\newcommand{\Z}{\mathbb{Z}}
		\newcommand{\A}{\mathbb{A}}
		\newcommand{\Sym}{\textnormal{Sym}}
        \newcommand{\Ker}{\textnormal{Ker}}
		\newcommand{\Hom}{\textnormal{Hom}}
		\newcommand{\gsp}{\textnormal{GSp}}
		\newcommand{\Det}{\textnormal{det}}
		\newcommand{\flag}{\mathcal{F}l}
		\newcommand{\p}{\mathbb{P}}
		\newcommand{\KS}{\textnormal{KS}}
		\newcommand{\LL}{\mathcal{L}}
		\newcommand{\Ig}{\textnormal{Ig}}
    \newcommand{\GL}{\textnormal{GL}}
    \newcommand{\Isom}{\underline{\textnormal{Isom}}}
    \newcommand{\Rep}{\textnormal{Rep}}
    \newcommand{\dR}{\textnormal{dR}}
    \newcommand{\et}{\textnormal{\'et}}
    \newcommand{\tor}{\textnormal{tor}}
    \newcommand{\can}{\textnormal{can}}
    \newcommand{\sub}{\textnormal{sub}}
    \newcommand{\Sh}{\textnormal{Sh}}
    \newcommand{\Shbar}{\overline{\textnormal{Sh}}}
    \newcommand{\Fpbar}{\overline{\F}_p}
     \renewcommand{\H}{\textnormal{H}}
\begin{document}

\title{Theta operators and a generic entailment for $\text{GSp}_4$}
\author{Martin Ortiz}

\maketitle


\begin{abstract}
We construct a new family of mod $p$ weight shifting differential operators on the Siegel threefold.
In particular, we construct one operator which generalizes the classical theta cycle, whose weight shift 
allows for maps between $p$-restricted weights, and which is generically injective on global sections. 
As an application we 
produce a generic entailment of Serre weights, i.e. any Hecke eigenform which is modular for a generic Serre weight 
in the lowest alcove is also modular for 
a Serre weight in one of the upper alcoves. 
The entailed Serre weight corresponds to a shadow weight of the lowest alcove Serre weight,
 in Herzig's conjectural description of $W(\overline{\rho})$. 
\end{abstract}

{\small \tableofcontents}
\renewcommand{\baselinestretch}{1}\normalsize

\section{Introduction}

The goal of this article is twofold. First, to study weight shifting differential operators 
over special fibers of Shimura varieties, i.e. mod $p$ theta operators, in the case of $\gsp_4/\Q$. 
Secondly, in \Cref{theoremA} we give the first example of a \textit{generic entailment} in the context of 
the weight part of Serre's conjecture. This shows that one can use 
theta operators to prove statements about Serre weights beyond the case of $\GL_2$.

The first theta operator was defined on mod $p$ modular forms by Katz \cite{katz-theta}, 
and it was then used by Edixhoven 
\cite{Edixhoven} 
to prove some cases of the weight part of Serre's conjecture. Concretely, given a non-ordinary mod $p$ eigenform
of weight $2 \le k \le p$,
repeated application of the theta operator (the so-called theta cycle, first appearing in \cite{Jochnowitz})
produces a form of weight $p-k+3$ with the same eigensystem up 
to a cyclotomic twist. More general theta operators 
have been defined by generalizing Katz's construction: Andreatta-Goren \cite{Andreatta-Goren} for Hilbert modular forms,
Yamauchi for $\gsp_4$ \cite{Yamauchi-1},
Eischen-Mantovan and their collaborators 
\cite{Eischen-Mantovan-1} \cite{Eischen-Mantovan-mu-ord} for type A and C PEL Shimura varieties,
and La Porta \cite{laporta} on strata of some unitary Shimura varieties. In this article we define a more extensive family of operators on the Shimura 
variety for $\gsp_4/\Q$, generalizing the phenomenon of the theta cycle to allow for a wider range of weight shifts.

Let $\Sh/\Z_p$ be the integral model of the Siegel threefold at hyperspecial level, and let $\Shbar$ be
its special fiber. 
One novelty is that we work over
the flag Shimura variety, which parametrizes flags on the Hodge bundle of
the Shimura variety: $\pi: \flag \to \Shbar$. For $(k,l) \in \Z^2$ one can define automorphic line bundles $\LL(k,l)$ over $\flag$
such that $\pi_* \LL(k,l)=\omega(k,l):=\Sym^{k-l} \omega \otimes \text{det}^l \omega$ are
precisely automorphic vector bundles on $\Shbar$. Thus, we can construct differential operators on $\Shbar$ from 
differential operators on $\flag$ by pushforward along $\pi$.
The operators
we define come in two different flavours. The first ones are 
\textit{basic theta operators} on $\flag$, whose construction is similar to Katz's.
First we define them on an open dense subset of $\flag$ characterized by admitting a canonical splitting of the 
symplectic flag, so that $\Omega^1_{\flag}$ splits into $4$ line bundles. 
Then we extend them to 
$\flag$ by multiplying by some appropriate number of Hasse invariants.
This produces $4$ theta operators 
$
\theta_i: \LL(\lambda) \to \LL(\lambda+\mu_i),
$
where 
$$
\mu_i=\begin{cases*}
(p+1,p-1), & i=1 \\
(2p,p-1), & i=2 \\
(2p,0), & i=3 \\
(p-1,0), & i=4
\end{cases*}
$$
with $\pi_* \theta_1$ already appearing in the literature, for instance in \cite{Yamauchi-1} and \cite{Eischen-Mantovan-1}.
See \Cref{weights-thetas} for the weight shifts taking into account the central character, which makes the operators Hecke equivariant. 
The weight increase of these operators is on the order of $p$,
which generically is insufficient for applications to the weight part of Serre's conjecture.
However, one can construct a second kind of operator with a smaller weight shift out of these basic operators. 
Using the property
that $\theta_1^p=H^p_1 \theta_3$ (where $H_1$ is the classical Hasse invariant), together with some control 
over when $H_1$ divides $\theta_1$, means that repeated application of $\theta_1$
exhibits a similar behaviour to Jochnowitz's theta cycle. Concretely, it produces an operator 
$$
\theta^1_{(k,l)}:=\theta^{p-k+1}_1 /H^{p-k+1}_1: \LL(k,l) \to \LL(2p-k+2,l) \;\;\; 1 \le k \le p
$$
whose weight increase is smaller.
We 
explain the significance of this map for the weight part of Serre's conjecture as outlined in \cite{Herzig1}
and \cite{Herzig-Tilouine}.
In characteristic zero due to Falting and Chai's BGG decomposition,
for $k \ge l \ge 3$
the non-Eisenstein Hecke eigensystems appearing in $\H^0(\Sh_{\overline{\Q}_p},\omega(k,l))$ are the same as the ones appearing in 
$\H^3_{\text{\'et}}(\Sh_{\overline{\Q}_p}, V(k-3,l-3)_{\overline{\Q}_p})$. Here $V(a,b)$ is the $\overline{\Q}_p$-local system
associated to the representation of $\gsp_4$ of highest weight $(a,b)$ with the same name,
the so-called dual Weyl module of weight $(a,b)$.
One can define dual Weyl modules integrally, but in general their mod $p$ reductions will be reducible.
Denote by $F(a,b)$ the socle of $V(a,b)_{\overline{\F}_p}$ as a $\gsp_4(\F_p)$-representation. Then 
 the irreducible $\gsp_4(\F_p)$-representations over $\overline{\F}_p$, i.e. \textit{Serre weights},
are all of the form $F(a,b)$ for \textit{$p$-restricted weights}: $\{0 \le a-b, b < p\}$. 
 Given a mod $p$ Hecke eigensystem $\mathfrak{m} \subseteq \mathbb{T}$ in the spherical Hecke algebra $\mathbb{T}$
 which acts on the \'etale cohomology of $\Sh$,
  the weight part of Serre's conjecture studies 
 the set of modular Serre weights
 $
 W(\overline{\rho}_{\mathfrak{m}}):=\{F(a,b): \H^*_{\et}(\Sh_{\overline{\Q}_p},F(a,b))_{\mathfrak{m}} \neq 0 \}.
 $ 
For a slightly more restrictive notion than $p$-restricted weights 
one can lift mod $p$ coherent cohomology to characteristic $0$
by a result from \cite{Lan-Suh-non-compact} and \cite{alexandre}. On the other hand, when 
$\mathfrak{m}$ is non-Eisenstein and \textit{generic} (i.e. there exists an auxiliary prime $l \neq p$ such that $\mathfrak{m}$
is generic at $l$ in the sense of \cite[Def 1.1]{Hamann-Lee}), then $\H^3_{\et}(\Sh_{\overline{\Q}_p},V(a,b)_{\Fpbar})_{\mathfrak{m}}$
lifts to characteristic $0$.
Then under these hypotheses on the weight and $\mathfrak{m}$,
the BGG decomposition in characteristic $0$ implies that
 $$
 \H^0(\Sh_{\overline{\F}_p},\omega(k,l))_{\mathfrak{m}} \neq 0 \iff  
\H^3_{\et}(\Sh_{\overline{\Q}_p},V(k-3,l-3)_{\overline{\F}_p})_{\mathfrak{m}} \neq 0.
$$
Crucially, 
in \Cref{geometric-entailment} we prove that for
$0 \le k \le p-1$ the map 
$\theta^1_{(k,l)}$ is injective on $\H^0(\Shbar,\omega(k,l))$. We do this by proving the injectivity of
the simpler operator $\theta_3$, which implies the injectivity of $\theta^1_{(k,l)}$ due to the relation $\theta^p_1=H^p_1\theta_3$. 
We prove the latter relation using Serre--Tate coordinates. 
 After translating to \'etale cohomology 
and using the decomposition of dual Weyl modules into Serre weights,
we get the following result. The $p$-restricted region $X_1(T)$ can be divided into four $\rho$-shifted alcoves 
$C_{i}$ for $i=0,1,2,3$. Given $\lambda \in C_0$ let $\lambda_i \in C_i$ be their corresponding affine Weyl reflections. 
\begin{theorem} \label{theoremA}
  (\Cref{entailment})
  Let $\lambda_0=(a,b) \in X^*(T)$ satisfying $a \ge b \ge 1, a+b<p-3$ and let $\mathfrak{m} \subset \mathbb{T}$
  be a generic non-Eisenstein eigensystem such that $F(\lambda_0) \in W(\overline{\rho}_{\mathfrak{m}})$.
  Then 
  $$
  F(\lambda_1) \in W(\overline{\rho}_{\mathfrak{m}}) \;\textnormal{ or } \; F(\lambda_2) \in W(\overline{\rho}_{\mathfrak{m}}),
  $$ where 
  $\lambda_1=(p-b-3,p-a-3) \in C_1$ or $\lambda_2=(p+b-1,p-a-3) \in C_2$. 
\end{theorem}
To sketch the proof of \Cref{theoremA}, the key fact is that $F(\lambda_0)$ is a Jordan--Holder factor of $V(\lambda_1)_{\Fpbar}$,
while $V(\lambda_2)_{\Fpbar}$ has only
$F(\lambda_1)$ and $F(\lambda_2)$ as factors. Then 
\begin{align*}
& \H^3_{\et}(\Sh_{\overline{\Q}_p},F(\lambda_0))_{\mathfrak{m}} \neq 0 \implies 
\H^3_{\et}(\Sh_{\overline{\Q}_p}, V(\lambda_1)_{\Fpbar})_{\mathfrak{m}} \neq 0
\implies \H^0(\Shbar,\omega(\lambda_1+(3,3)))_{\mathfrak{m}} \neq 0 \\
& \implies \H^0(\Shbar,\omega(\lambda_2+(3,3)))_{\mathfrak{m}} \neq 0
 \implies \H^3_{\et}(\Sh_{\overline{\Q}_p}, V(\lambda_2)_{\Fpbar})_{\mathfrak{m}} \neq 0,
\end{align*}
where the second to last arrow is implied by the injectivity of $\theta^1_{\lambda_1+(3,3)}$.
In \cite{Gee-Herzig-Savitt} they introduced the terminology that
$F(\lambda_0)$ \textit{entails} $F(\lambda_1)$
or $F(\lambda_2)$ to denote this phenomenon in which for every $\mathfrak{m}$\footnote{We allow ourselves to restrict 
to generic non-Eisenstein $\mathfrak{m}$.} being modular for some Serre weight implies being modular 
for some other Serre weights. 
This is the first instance of a generic entailment in the literature, where generic means 
that it holds for $\lambda_0$ sufficiently away from the walls of the lowest alcove 
(the region $\{a \ge b \ge 0, a+b <p-3\}$). We also produce examples of 
non-generic entailments using $\theta_1$ and $H_1$ in \Cref{non-generic}.

We explain the relation of this entailment to the structure of the Breuil--Mezard cycles. The Breuil--Mezard conjecture (\cite{Breuil-Mezard},
\cite{EG-stack} in this formulation)
predicts the existence of top dimensional cycles $\mathcal{Z}_{V} \in \text{Ch}_{\text{top}}(\mathcal{X}_{\text{red}})$ inside 
the reduced moduli stack of $4$ dimensional symplectic $(\phi,\Gamma)$-modules for each 
$V \in K^0(\Rep_{\overline{\F}_p}(\gsp_4(\F_p))$. 
These should satisfy that
$\mathcal{Z}_{V(\lambda)}$ is equal to the class of $\mathcal{X}^{\lambda+\rho}_{\F_p}$: the special fiber of the substack
of crystalline Galois representations 
with Hodge-Tate weights $\lambda+\rho$. Assuming this conjecture \cite{Gee-Herzig-Savitt} formulate a general version of 
the weight part of Serre's conjecture: $W(\overline{\rho}_{\mathfrak{m}})=\{ \sigma :   
\mathcal{Z}_{\sigma}(\overline{\rho}_{\mathfrak{m}})\neq 0 \}$. Assuming the Breuil--Mezard conjecture, this version of 
the weight part of Serre's conjecture, and strong enough globalization results, then \Cref{theoremA} 
implies that
$
\mathcal{Z}_{F(\lambda_0)} \subset \mathcal{Z}_{V(\lambda_2)}=\mathcal{Z}_{F(\lambda_2)}+\mathcal{Z}_{F(\lambda_1)}.
$
Through some
computations on local models of Galois deformation rings one can see that it must in fact be that
$\mathcal{Z}_{F(\lambda_0)} \subset \mathcal{Z}_{F(\lambda_2)}$, and that 
$F(\lambda_0) \in W(\overline{\rho}_{\mathfrak{m}}) \implies F(\lambda_2) \in W(\overline{\rho}_{\mathfrak{m}})$
should be the only generic entailment for $\gsp_4/\Q_p$. By a similar argument, there should not
be a generic entailment for 
$\GL_{3}/\Q_p$, so in some sense this constitutes the simplest example of a generic entailment. 
See also forthcoming work of Le Hung-Lin \cite{Le-Hung-Lin}
where they 
determine generic Breuil--Mezard cycles in terms of their irreducible components (of the reduced Emerton-Gee stack) 
for low rank groups including $\gsp_4/\Q_p$,
building on the approach of \cite{Le-Hung-Feng}.

We make some comments on possible generalizations. In forthcoming work \cite{Hodge-paper} we will generalize the construction 
of the basic theta operators to Hodge type Shimura varieties, as well as providing a more conceptual way 
to construct them, and we will further 
the study of their properties. We will also generalize a class of maps containing $\theta^1_{(k,l)}$, which we 
denote by \textit{theta linkage maps}. We have decided to keep the original construction of 
the basic theta operators and of $\theta^1_{(k,l)}$, as it 
is more similar to the construction of previous theta operators. 
Using this more general theory, or the techniques of this paper, 
we can also produce a generic entailment for a unitary Shimura variety with $G_{\Q_p}=\GL_4 \times \mathbb{G}_m$.
However, the main limitation of our method concerns the decomposition of dual Weyl modules into irreducible constituents, which 
at the moment
prevents us from producing examples of generic entailments for groups of arbitrarily high rank.

The organization of this article is as follows. In \Cref{Section1} we recall the geometric and representation 
theoretic setup 
for the Siegel threefold. In \Cref{Section2} we define the basic theta operators on an open dense subset $U \subset \flag$.
In \Cref{Section3} we extend them to $\flag$ and we prove some key properties about them
in the general spirit of previous literature. This includes the relation to the 
operators in \cite{Eischen-Mantovan-1}, culminating in \Cref{geometric-entailment}.
In \Cref{Section5} we spell out the proof of the generic entailment, and we give some examples of non-generic entailments. 

\textbf{Acknowledgments.} First and foremost I want to thank my advisor George Boxer for his continued support,  
and great generosity with his time and mathematical knowledge, as well as his opportune comments.
 I am grateful to my other advisor Payman Kassaei for introducing
me to this area and his interest on my work. I thank Fred Diamond and Brandon Levin for conversations about my work.
I also thank Lorenzo La Porta for all of our conversations and for comments on an earlier draft. Finally, I would like to 
thank my peers and friends at Imperial College. 
 This work was supported by the Engineering and Physical Sciences Research Council [EP/S021590/1]. 
The EPSRC Centre for Doctoral Training in Geometry and Number Theory 
(The London School of Geometry and Number Theory), University College London, King's College London and 
Imperial College London. 

\section{Set up} \label{Section1}

Let $p$ be a prime, $G=\gsp_4$ as a reductive group over $\Z$. Our convention is that $G$ is the group
of symplectic similitudes of $\Z^4$ with its symplectic form given by the matrix 
$$
J=\begin{pmatrix}
0 & S \\
-S & 0 
\end{pmatrix}
$$
with $S=\begin{pmatrix}
  0 & 1 \\
  1 & 0 
  \end{pmatrix}$.
Let $\A^{\infty}$ be the finite adeles of $\Q$, we fix a level $K=K^pK_p \subset G(\A^{\infty})$ hyperspecial at $p$,
 in particular its component at $p$
 satisfies that $K_p=G(\Z_p)$. We also assume that $K$ is neat, so that
one can define a smooth quasi-projective integral model $\text{Sh}_{K}/\Z_p$ of the 
 Siegel Shimura variety associated to $(G,\mathcal{H}_2,K)$ as follows. 
 \begin{defn} \label{Shimura-variety}
	Let $F: \text{Sch}/\Z_p \to \text{Sets}$ be
	the functor defined by $F(S)$ being the set of tuples $(A,\lambda, \eta)$ up to equivalence, where
	\begin{itemize}
	\vspace{-0.6em}
    \item $A/S$ is an abelian scheme of relative dimension $2$.
    \item $\lambda: A \to A^{\vee}$ is a prime to $p$ quasi-isogeny such that there exists an 
    integer $N \ge 1$ with $N\lambda$ a polarization, i.e. a prime to $p$ quasi-polarization. 
    \item $\eta$ is a \textit{rational} $K$-level  structure of $A$ as in \cite[1.3.8]{Lan-thesis}.
	\end{itemize}
	  Two tuples
	 $(A,\lambda, \eta)$ and $(A',\lambda',\eta')$ are equivalent if there exists a prime to $p$ quasi-isogeny
	  $\phi: A \to A'$ commuting with $\lambda$ and $\lambda'$ up to a $\Z^{\times,>0}_{(p)}$ constant, and such that the pullback of
	  $\eta'$ under $\phi$ is $\eta$. This functor is represented by a smooth quasi-projective scheme 
	  $\text{Sh}_K$ over $\Z_p$. We will mostly drop the level from the notation. Let $\Shbar=\Sh \otimes \overline{\F}_p$.
	 
 \end{defn}
  There is also a moduli description of $\text{Sh}_K$ in terms of isomorphism classes of abelian varieties, 
  which is more convenient when doing deformation theory. 
 \begin{prop} \cite[1.4.3.3]{Lan-thesis} \label{Sh-moduli-isomorphisms}
 $\textnormal{Sh}_K$ also represents the following moduli problem $\textnormal{Sch}/\Z_p \to \textnormal{Sets}$
  whose $S$-points are tuples $(A,\lambda,\eta)$ up to isomorphism
 \begin{itemize}
 \item $A/S$ an abelian scheme of relative dimension $2$.
 \item $\lambda : A \to A^\vee$ a prime to $p$ polarization.
 \item $\eta$ is an \textit{integral} $K$-level structure of $(A,\lambda)$ as in \cite[1.3.7.8]{Lan-thesis}.
 \end{itemize}
 An isomorphism between $(A,\lambda, \eta)$ and $(A',\lambda',\eta')$ is an isomorphism $f: A \to A'$
 compatible with polarizations, and such that it sends $\eta'$ to $\eta$. 
 \end{prop}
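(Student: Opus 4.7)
The plan is to construct natural transformations between the two moduli functors in both directions and verify they are mutually inverse. The map from the integral moduli problem to the rational one is essentially tautological: a prime to $p$ polarization is in particular a prime to $p$ quasi-polarization (with $N=1$), an integral level structure defines a rational one, and isomorphisms are special cases of prime to $p$ quasi-isogenies; hence an isomorphism class in the second moduli problem clearly determines a class in the first.

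For the reverse direction, given a class $(A,\lambda,\eta)$ in the rational moduli problem, I would produce a canonical isomorphism-class representative inside its prime to $p$ quasi-isogeny class. Recall that $\eta$ is a $K^p$-orbit of symplectic-similitude isomorphisms $V \otimes \A^{\infty,p} \xrightarrow{\sim} V^{(p)}(A)$, where $V=\Z^4$ carries its standard symplectic form and $V^{(p)}(A)$ is the rational prime-to-$p$ Tate module (as an \'etale sheaf on $S$). A chosen representative $\tilde{\eta}$ sends the standard lattice $V \otimes \hat{\Z}^{(p)}$ to a prime-to-$p$ lattice $L \subset V^{(p)}(A)$. By the standard equivalence between prime-to-$p$ quasi-isogeny classes of abelian schemes and lattices in the rational prime-to-$p$ Tate module (Serre's tensor construction in the relative setting), there is a prime-to-$p$ quasi-isogeny $\phi: A \to A'$ with $T^{(p)}(A')$ equal to $L$. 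Pushing $\lambda$ forward along $\phi$ gives a genuine prime-to-$p$ polarization $\lambda'$ on $A'$, because the symplectic pairing on $V \otimes \hat{\Z}^{(p)}$ is $\hat{\Z}^{(p)}$-valued, and $\tilde{\eta}$ becomes an integral level structure $\eta'$.

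The crucial well-definedness check is that this assignment is independent of choices. Replacing $\tilde{\eta}$ by $\tilde{\eta} \circ k$ for $k \in K^p$ changes $L$ by $L \cdot k^{-1}$, producing an $A'$ canonically isomorphic to the original one and altering $\eta'$ precisely by the $K^p$-action, so we land in the same $K^p$-orbit of integral level structures. Similarly, a prime-to-$p$ quasi-isogeny between two representatives in the first moduli problem produces the same lattice in the common rational Tate module and hence isomorphic integral tuples. The two constructions are then mutually inverse essentially by construction: starting from an integral tuple, the lattice $L$ already equals $T^{(p)}(A)$ so $\phi$ may be taken to be the identity; starting from a rational tuple, the composition is tautologically equivalent to the original via $\phi$.

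The main obstacle is careful bookkeeping in the relative setting, particularly (i) making the lattice-switching step rigorous over an arbitrary base $S$ via the \'etale sheaf interpretation of $V^{(p)}(A)$ and the relative Serre tensor construction, and (ii) tracking the similitude characters to check that polarizations and level structures remain compatible up to a global $\Z^{\times,>0}_{(p)}$-scalar. Both are handled in detail in \cite[1.4.3.3]{Lan-thesis}, and I would follow that treatment, specializing to the hyperspecial $\gsp_4$ setting. The only $\gsp_4$-specific input is that the standard symplectic form $J$ on $\Z^4$ is perfect, which is what ensures the resulting $\lambda'$ is a polarization rather than merely a quasi-polarization.
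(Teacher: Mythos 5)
The paper does not give a proof of this proposition: it is stated with a citation to \cite[1.4.3.3]{Lan-thesis} and the argument is deferred entirely to that reference. Your sketch is a correct outline of the standard argument there — tautological forward map, lattice-switching via the relative Serre tensor construction for the reverse map, well-definedness under change of representative in the $K^p$-orbit and under prime-to-$p$ quasi-isogeny, with the similitude-character bookkeeping flagged as the technical content — and you correctly end by deferring the details to the same source. So you and the paper are taking the same route: both cite Lan.

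One small point worth tightening if you were to write this out in full. You assert that pushing $\lambda$ forward along the quasi-isogeny $\phi$ yields a genuine prime-to-$p$ polarization because the symplectic form is $\hat{\Z}^{(p)}$-valued on the standard lattice. As stated this isn't quite automatic: a rational level structure only matches the Weil pairing with the standard form up to a scalar in $\A^{\infty,p,\times}$, and the pushforward $\phi_* \lambda$ is a priori only a quasi-polarization. What saves the argument is that the equivalence relation on the rational moduli problem allows rescaling the polarization by an element of $\Z^{\times,>0}_{(p)}$, and Lan's compatibility conditions in the definition of a rational level structure pin the similitude factor down to $\hat{\Z}^{(p),\times}\cdot \Q^{\times,>0}$; after a permitted rescaling the pushforward becomes integral and positive. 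You gesture at this in your last paragraph, but it is the one place where the argument is not merely bookkeeping and would need to be written carefully.
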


  Let $A/\Sh$ be the universal abelian variety with identity $e$, its Hodge bundle is defined by
  $$
  \omega \coloneqq e^* \Omega^1_{A/\Sh}.
  $$
  Our notation for weights is as follows: for $(k,l) \in \Z^2$ satisfying $k\ge l$,
  the automorphic vector bundle on $\Sh$ of weight $(k,l)$
  is
  $$
  \omega(k,l)\coloneqq\Sym^{k-l} \omega \otimes \Det^{l} \omega.
  $$
  Then the space of mod $p$ Siegel modular forms of genus $2$ of weight $(k,l)$ is $\H^0(\Shbar,\omega(k,l))$.
  As a useful auxiliary space we introduce the flag Shimura variety.

  \begin{defn}[Flag Shimura variety]
   We define $\flag$ to be the flag space of $\omega$, with the convention 
   that 
   $$
   \flag=\underline{\textnormal{Proj}}(\text{Sym}^{\bullet} \omega) \xrightarrow{\pi} \Sh.
   $$
   It is smooth proper over $\Sh$, with its fibers isomorphic to $\p^1$, and it comes equipped with a line bundle 
   $\mathcal{L} \subset \pi^* \omega$. For $(k,l) \in \Z^2$ we define
   the line bundle $\mathcal{L}(k,l)$ on $\flag$ by
   $$
   \mathcal{L}(k,l)\coloneqq (\omega/\mathcal{L})^k \otimes \mathcal{L}^l.
   $$
    
  \end{defn}
  We record some properties about the flag Shimura variety, let $\pi: \flag \to \Sh$. 
  \begin{lemma} \label{basic-lemma-flag}
  \begin{enumerate}
   \item For $(k,l) \in \Z^2$, we have $\pi_* \LL(k,l)=\omega(k,l)$. In particular 
   the space of global sections of $\LL(k,l)$ is the space of Siegel modular forms of weight $(k,l)$
   \cite[\href{https://stacks.math.columbia.edu/tag/01XX}{Tag 01XX}]{stacks-project}.
   \item There is a canonical isomorphism $\Omega^1_{\flag/\Sh}=\mathcal{L}(-1,1)$.
   \item $\det \omega=\mathcal{L} \otimes (\omega/\mathcal{L})$.

  \end{enumerate}
  \begin{proof}
  For $2)$ on each affine $U \subset \Sh$ such that $\omega_{U}$ is trivial the isomorphism reduces to
  the canonical isomorphism $\Omega^1_{\mathbb{P}^1_U/U}=\mathcal{O}(-2)_{U}=(\LL \otimes (\omega/\LL)^{-1})_{U}$, 
  and we can check that these isomorphisms glue.  
  \end{proof}
  \end{lemma}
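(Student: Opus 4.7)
The plan is to treat $\pi: \flag \to \Sh$ as a Zariski-locally trivial $\mathbb{P}^1$-bundle and reduce every assertion to standard facts about $\mathbb{P}^1$ plus the projection formula. The key structural input is the tautological short exact sequence
$$
0 \to \LL \to \pi^*\omega \to \omega/\LL \to 0
$$
on $\flag$, whose determinant is exactly (3). For (1), I would use the $\textnormal{Proj}$ construction to identify one of $\LL$ or $\omega/\LL$ (up to a pullback twist) with the relative tautological line bundle $\mathcal{O}_{\flag}(1)$; combined with (3) this rewrites $\LL(k,l) = (\omega/\LL)^k \otimes \LL^l$ in the form $\mathcal{O}_{\flag}(k-l) \otimes \pi^* \mathcal{N}$ for an explicit line bundle $\mathcal{N}$ on $\Sh$ built from $\det\omega$. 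Applying the projection formula together with the standard $\pi_* \mathcal{O}_{\flag}(n) = \Sym^n \omega$ for $n \geq 0$ then yields $\pi_*\LL(k,l) = \Sym^{k-l}\omega \otimes \mathcal{N}$, which is $\omega(k,l)$ once the weight normalization is matched.

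Part (2) I would argue as in the excerpt: restrict to an affine open $U \subset \Sh$ over which $\omega$ is trivial, so that $\flag \times_\Sh U \cong \mathbb{P}^1_U$; on this the canonical identification $\Omega^1_{\mathbb{P}^1_U/U} = \mathcal{O}(-2)$ agrees with $\LL(-1,1)|_U = (\LL \otimes (\omega/\LL)^{-1})|_U$. Since both sides are intrinsically defined (independent of the choice of trivialization of $\omega$), these local identifications glue to a global isomorphism, using that the transition functions on either side are induced from the same $\GL_2$-cocycle of $\omega$.

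Part (4) is the main technical step, and here I expect the bookkeeping to be the only real obstacle. I would apply relative Serre duality for the smooth proper morphism $\pi$ of relative dimension one, whose relative dualizing sheaf is $\Omega^1_{\flag/\Sh} = \LL(-1,1)$ by (2). Duality gives
$$
R^1\pi_* \LL(k,l) \cong \mathcal{H}om_{\mathcal{O}_\Sh}\!\bigl(\pi_*(\LL(-k,-l)\otimes \LL(-1,1)),\ \mathcal{O}_\Sh\bigr).
$$
Using additivity in the weights, the inner tensor equals $\LL(-k-1,\,1-l)$, and when $l-k-2 \geq 0$ part (1) computes its pushforward as $\Sym^{l-k-2}\omega$ twisted by an appropriate power of $\det\omega$; dualizing then moves this $\det$-twist outside and yields $\mathcal{H}om(\Sym^{l-k-2}\omega,\mathcal{O}_\Sh)\otimes \det^{l-1}\omega$ as claimed. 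When $l-k-2 < 0$ the inner $\pi_*$ vanishes, matching the convention that a negative symmetric power is zero, so both sides of (4) vanish. The only subtlety is that the normalization of $\mathcal{O}_{\flag}(1)$ (equivalently, whether $\LL$ or $\omega/\LL$ is declared to be $\mathcal{O}(\pm 1)$) fixes the exact $\det\omega$-twists appearing in (1) and (4); once this convention is pinned down at the start, each of the four parts reduces to the projection formula, cohomology of $\mathbb{P}^1$, and relative Serre duality.
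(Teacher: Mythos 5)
Your proposal is correct and the approach is the standard one the paper is implicitly invoking: the paper only spells out part~(2) and for parts~(1) and~(4) just cites the Stacks project result on cohomology of twists on a projective bundle, which amounts to the same computation you do. One small difference: for part~(4) you reach the formula via relative Serre duality together with~(2), whereas the cited Stacks tag gives the $R^1\pi_*$ directly by Čech computation on $\mathbb{P}^1$; both routes are entirely standard and give identical output, so this is a stylistic choice rather than a genuinely different method. Your bookkeeping checks out: from $\LL(k,l) = \mathcal{O}_{\flag}(k-l)\otimes\pi^*\det^l\omega$ (using $\omega/\LL = \mathcal{O}(1)$ and $\LL = \pi^*\det\omega\otimes\mathcal{O}(-1)$), projection formula gives $\pi_*\LL(k,l) = \Sym^{k-l}\omega\otimes\det^l\omega$, and Serre duality with $\omega_\pi = \LL(-1,1)$ gives $R^1\pi_*\LL(k,l) = \mathcal{H}om(\pi_*\LL(-k-1,1-l),\mathcal{O}) = \mathcal{H}om(\Sym^{l-k-2}\omega,\mathcal{O})\otimes\det^{l-1}\omega$, matching~(4). (As a side remark your computation confirms that the exponent in the paper's definition $\omega(k,l)=\Sym^{k-l}\omega\otimes\det^{k}\omega$ should read $\det^{l}\omega$; this is consistent with the usage elsewhere in the paper.)
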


  The degeneration of the Hodge-de-Rham spectral sequence for abelian varieties yields the exact sequence
  $$
  0 \to \omega \to \H^1_{\text{dR}}(A/\Sh) \to  \omega^{\vee}_{A^\vee} \to 0, 
  $$
  and $\omega_{A^\vee} \xrightarrow{d\lambda} \omega_{A}$ is an isomorphism
  since $\lambda$ is prime to $p$. We will usually make this identification, whose only role will appear 
  when considering the Hecke action. 
  Let $H$ denote $\H^1_{\text{dR}}(A/\Sh)$ throughout the rest of the paper.
  This way $H$ is a locally free sheaf, and it carries a symplectic pairing
  $\langle, \rangle$ induced by 
  the quasi-polarization and the canonical pairing $\H^1_{\dR}(A/\Sh) \otimes \H^1_{\dR}(A^{\vee}/\Sh) \to \mathcal{O}_{\Sh}$.
   In characteristic $p$ it also comes with Verschiebung $V: H \to H^{(p)}$ and Frobenius 
  $F: \H^{(p)} 
  \to \H$
  maps
  induced by the respective maps on $A/\Shbar$. Then $\omega$ is a maximal isotropic subspace for the symplectic
  pairing, and $V, F$ satisfy 
  $$
  \langle Vx, y \rangle =\langle x,Fy \rangle ^{(p)}
  $$
  for $x \in H, y \in H^{(p)}$. It is also equipped with the Gauss-Manin connection. 

  In general, let $Y/S/T$  be schemes, with $Y/S$ and $S/T$ smooth. 
 The Gauss-Manin connection is a natural connection \cite{Katz-Oda}
 $$
 \nabla_{Y/S/T} : \H^1_{\text{dR}}(Y/S) \to \H^1_{\text{dR}}(Y/S) \otimes_{\mathcal{O}_S} \Omega^1_{S/T}.
 $$
 It is compatible with base change in the following sense. 
 \begin{lemma}(Functoriality of $\nabla$) \label{GM-base-change}
 Let $Y/S/T$ as before, $f:S' \to S$ a morphism of $T$-schemes, and let 
 $Y'\coloneqq Y \times_S S'$. Let $e$ be a local section of $\H^1_{\text{dR}}(Y/S)$, we can pull it back to a
 local section $f^* e$ of $\H^1_{\text{dR}}(Y'/S')=f^*\H^1_{\text{dR}}(Y/S)$. Then
 $$
\nabla_{Y'/S'/T}(f^* e)=\textnormal{id} \otimes \textnormal{df}(f^* \nabla_{Y/S/T}(e))
 $$
 where $\textnormal{df}: f^* \Omega^1_{S/T} \to \Omega^1_{S'/T}$ is the differential of $f$. That is,
 the diagram 
 $$
\begin{tikzcd}
\H^1_{\text{dR}}(Y'/S') \arrow["\nabla_{Y'/S'/T}",r] & \H^1_{\text{dR}}(Y'/S') \otimes \Omega^1_{S'/T} \\
\H^1_{\text{dR}}(Y/S) \arrow["f^*",u] \arrow["\nabla_{Y/S/T}",r] & \H^1_{\text{dR}}(Y/S) \otimes \Omega^1_{S/T}
\arrow["f^* \circ \textnormal{df}",u]
\end{tikzcd}
 $$
 commutes when interpreted at the level of local sections. 

 \end{lemma}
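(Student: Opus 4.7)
The plan is to verify the statement by unwinding the explicit Katz-Oda construction of $\nabla$ via the Koszul filtration and tracking how it behaves under pullback. Recall that for smooth $Y/S/T$ the Koszul (or Hodge) filtration on $\Omega^\bullet_{Y/T}$ is defined by
$$F^i \Omega^\bullet_{Y/T} = \mathrm{image}\bigl(\pi^{-1}\Omega^i_{S/T}\otimes_{\pi^{-1}\mathcal{O}_S} \Omega^{\bullet-i}_{Y/T}\to \Omega^\bullet_{Y/T}\bigr),$$
and the short exact sequence
$$0\to F^1/F^2 \to \Omega^\bullet_{Y/T}/F^2 \to \Omega^\bullet_{Y/S} \to 0,$$
with $F^1/F^2 = \pi^*\Omega^1_{S/T}\otimes \Omega^{\bullet-1}_{Y/S}$, produces $\nabla_{Y/S/T}$ as the connecting homomorphism in hypercohomology on degree one.

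First I would check that the Cartesian square $Y'\to Y$, $S'\to S$ induces morphisms of filtered complexes. Pullback of differential forms gives maps $f_Y^*\Omega^\bullet_{Y/T}\to \Omega^\bullet_{Y'/T}$ and $f_Y^*\Omega^\bullet_{Y/S}\to \Omega^\bullet_{Y'/S'}$, and because the Koszul filtration is defined by images of $\pi^{-1}\Omega^\bullet_{S/T}$, these pullbacks send $F^i$ to $F^i$. In particular, the horizontal pullback map fits into a commutative diagram of short exact sequences: the $F^1/F^2$ row on $Y$ maps to the one on $Y'$ via $(\mathrm{id}\otimes df)\otimes f_Y^*$ on $\pi^*\Omega^1_{S/T}\otimes \Omega^{\bullet-1}_{Y/S}\to \pi'^*\Omega^1_{S'/T}\otimes \Omega^{\bullet-1}_{Y'/S'}$.

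Second I would invoke the naturality of the connecting homomorphism in the long exact sequence of hypercohomology with respect to morphisms of short exact sequences of complexes. Applied to the diagram above on degree-one hypercohomology, this yields precisely the identity
$$\nabla_{Y'/S'/T}\circ f^* = (f^*\otimes df)\circ \nabla_{Y/S/T}$$
at the level of local sections, where $f^*: H^1_{\dR}(Y/S)\to H^1_{\dR}(Y'/S')$ is the pullback on de Rham cohomology and the tensor factor $df: f^*\Omega^1_{S/T}\to \Omega^1_{S'/T}$ is the differential of $f$.

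The argument is essentially formal once the filtered pullback is in place, so the only real point to verify carefully is that the comparison map $f_Y^*\Omega^\bullet_{Y/T}\to \Omega^\bullet_{Y'/T}$ respects the Koszul filtration and induces $\mathrm{id}\otimes df$ on the graded piece $F^1/F^2$, which is a local computation in coordinates on a chart where $\Omega^1_{S/T}$ and $\Omega^1_{Y/S}$ are free. I do not expect a serious obstacle: the content of the lemma is naturality of the connecting homomorphism, and the only work is checking that the universal form of pullback on $\Omega^1_{S/T}$ gives exactly the map $df$ on the $F^1/F^2$-graded piece.
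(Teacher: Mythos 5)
The paper does not give a proof of this lemma; it is stated directly after defining the Gauss--Manin connection via the citation to Katz--Oda, and the functoriality is treated as standard. Your proof is the standard argument, essentially the one in Katz--Oda itself: define $\nabla$ as the connecting map of the Koszul filtration on $\Omega^\bullet_{Y/T}$, observe that pullback of forms respects the filtration, and then read off functoriality from naturality of the connecting homomorphism. The identification of the induced map on the graded piece $F^1/F^2$ with $\mathrm{id}\otimes df$ is indeed the only genuine computation, and it is the correct thing to check.

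Two points you gloss over that are worth making explicit if you write this up carefully. First, your two short exact sequences of complexes live on different spaces ($Y$ and $Y'$), so invoking naturality of the connecting map requires the base change morphism $f^* R\pi_* \to R\pi'_* f_Y^*$; the map of long exact sequences is obtained by applying $f^*$ to the triangle on $S$ and mapping it to the triangle on $S'$ via this natural transformation, not by a literal map of complexes on a single space. Second, the formula as stated in the lemma uses the identification $H^1_{\dR}(Y'/S') = f^* H^1_{\dR}(Y/S)$, i.e.\ base change for de Rham cohomology, which is an extra hypothesis (automatic in the paper's setting where $Y/S$ is an abelian scheme, so $H^1_{\dR}$ is locally free and Hodge--de Rham degenerates). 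Your argument implicitly uses this to compare $f^*R^1\pi_*$ with $R^1\pi'_*$; noting it keeps the proof honest. Neither point is a gap in the idea, just the usual care needed when pushing the Koszul argument through derived pushforward.
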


 Consider $\nabla$ with respect to $A/\Sh/\Z_p$. It respects the symplectic structure in the sense that 
 $\langle \nabla v, w \rangle + \langle v, \nabla w\rangle=d\langle v,w \rangle$ for $v,w \in H$. 
 By restricting $\nabla$ to $\omega$ and then projecting along the map $H \to \omega^{\vee}_{A^\vee} $
 we obtain the Kodaira--Spencer morphism 
 $$
\KS: \omega_{A} \otimes \omega_{A^{\vee}} \to \Omega^1_{\Sh/\Z_p}.
 $$
 After applying the isomorphism $d\lambda: \omega_{A^{\vee}} \to \omega_{A}$ one can see that the map factors through
 an isomorphism
  $\text{ks}: \text{Sym}^2 \omega
\cong \Omega^1_{\Sh/\Z_p}$ also denoted by Kodaira--Spencer.

Now we define the two Hasse invariants on $\flag/\overline{\F}_p$. Recall that for $L$ a line bundle on a characteristic $p$
  scheme $X$ there is a canonical isomorphism $L^{(p)}\coloneqq\text{Frob}^{*}_X L \cong L^{\otimes p}$.
  \begin{defn}(Hasse invariants)
   On $\flag$ consider the Verschiebung map $V: \omega \to \omega^{(p)}$.
   \begin{enumerate}
\item By taking determinants of $V$ we obtain a map $\det V: \det \omega \to (\text{det} \omega)^{(p)}=\text{det}^p \omega$,
      which produces a section 
	  $$
      H_1 \in H^0(\flag,\mathcal{L}(p-1,p-1)).
	  $$
	  We say that $H_1$ is the first Hasse invariant of $\flag$.
\item Consider the composite map $\mathcal{L} \hookrightarrow \omega \xrightarrow{V}  \omega^{(p)}
\to (\omega/\mathcal{L})^{(p)}$. This produces the second Hasse invariant
$$
H_2 \in \H^0(\flag, \mathcal{L}(p,-1)).
$$
\item Let $D_i$ be the locus where $H_i$ vanishes, and let $U_i$ be its complement in $\flag$. 
   \end{enumerate}
  \end{defn}

  The first Hasse invariant is the pullback of the classical Hasse invariant
  $H \in \H^0(\Shbar, \det^{p-1} \omega)$, while $H_2$
  lives naturally on the flag variety. These are precisely the Hasse invariants 
  considered in much more generality in \cite{Hasse-flag}. Moreover, we will sometimes use the $p$-rank $1$ Ekedahl-Oort strata 
  $\Shbar^{=1} \subseteq \Shbar$ defined by $V: \omega \to \omega^{(p)}$ having rank $1$, and 
  $V^2 \neq 0$.

  \begin{notation} \label{notation-local-computation}
  We explain our notation for local computations on $\flag$,
 which will be used throughout. Let $W \subset \Shbar$
  be an open subset of $\Shbar$ such that $\omega|W$ is free with basis $\{ e_1, e_2 \}$.
   On the pullback of
  $\omega$ to $\flag$ we use the pullback of this basis. Then on $W$
  we can identify $\flag$ with $\p^1_W$ and $\mathcal{L}$ with $\mathcal{O}(-1)$. We denote the coordinates
  of $\p^1=\text{Proj}(\Z[x,y])$ by $x,y$, and we consider $\mathcal{L}$ embedded in $\mathcal{O}^2 \cong \omega$
  by 
  $$
  \mathcal{L} \cong \mathcal{O}(-1) \xrightarrow{(x,y)} \mathcal{O}^2 \cong \omega.
  $$
  On the affine chart $\{x \neq 0\}$ $\mathcal{L}$ is free with basis $e_1 + \frac{y}{x}e_2$. We will
  denote $\frac{y}{x}$ by $T$ and refer to the associated chart 
  as $\mathbb{A}^1_{W} \subset \p^1_{W}$, where $\omega/\mathcal{L}$
  is free with basis $e_2$. Thus, in this chart we will use 
  $$
  e^k_2 (e_1+Te_2)^l=e^{k-l}_2 (e_1 \wedge e_2)^l
  $$
  as a local basis for $\LL(k,l)$.
  Let 
  $$
  V=\begin{pmatrix}
   a & b \\
   c & d 
  \end{pmatrix}
  $$
  be the matrix of $V: \omega \to \omega^{(p)}$ with respect to the local basis, e.g.
  $V(e_1)=a e^{(p)}_1 + c e^{(p)}_2$. Then 
  $$
  H_1=(ad-bc)(e_1 \wedge e_2)^{p-1}.
  $$
  A basis for $\mathcal{L}^{(p)}$ on $\A^1_W$ is $e^{(p)}_1+T^p e^{(p)}_2$, and 
  $$
  V(e_1+Te_2)=(a+bT)e^{(p)}_1+(c+dT)e^{(p)}_2=(c+dT-aT^p-bT^{p+1})e^{p}_2 \in (\omega/\LL)^p,
  $$ so that
  \begin{align*} 
  H_2=&(c+dT-aT^p-bT^{p+1}) e^p_2(e_1+Te_2)^{-1}= \\ 
  & (cx^{p+1}+dx^py-ay^px-by^{p+1})e^{p-1}_2(e_2 \wedge e_1)^{-1}
  \in \H^0(\p^1_W,\mathcal{L}(p,-1)).
  \end{align*}
 After choosing a basis $\{e_1,e_2\}$ as above, we will denote by $\tilde{H}_i$ the local functions defining 
 $H_i$.

\end{notation}

We can also define automorphic vector bundles in a more systematic way. Let $G=\gsp_4$, $P$ be the Siegel
parabolic
$$
 P=\begin{pmatrix}
 * & * & * & * \\
 * & * & * & * \\
 0 & 0 & * & * \\
 0 & 0 & * & * 
 \end{pmatrix},
 $$
 and $B$ the upper triangular Borel. 
\begin{defn}
  Let $(\Lambda:=\Z^4_p=L \oplus L^\vee, \langle, \rangle)$ be the standard $4$-dimensional symplectic space over $\Z_p$, 
  with $L=\langle (1,0,0,0),(0,1,0,0) \rangle$ as Lagrangian. Further, let $F=\langle (1,0,0,0) \rangle \subset L$. 
  Consider 
  \begin{align*}
  I_P&:=\Isom_{\mathcal{O}_{\Sh}}((\H^1_{\text{dR}}(A/\Sh),\omega),(L \oplus L^\vee, L) \otimes \mathcal{O}_{\Sh}) \\
  I_B&:=\Isom_{\mathcal{O}_{\flag}}((\H^1_{\text{dR}}(A/\flag),\omega,\LL),(L \oplus L^\vee, L,F) \otimes \mathcal{O}_{\flag}),
  \end{align*}
  where all the isomorphisms should respect the symplectic pairing up to a scalar. They are \'etale $P$ and $B$ torsors
  respectively, with the groups acting on the right. For any $\Z_p$-algebra $R$, define functors 
  \begin{align*}
   F_P :& \Rep_{R}(P) \to \text{Coh}(\Sh_R) \\
   & W \mapsto I_P \times^{P} W \\
   F_B :& \Rep_{R}(B) \to \text{Coh}(\flag_R) \\
   & V \mapsto I_B \times^{B} V 
  \end{align*}
  which produce automorphic vector bundles.
  \end{defn}

  For $W \in \Rep(P)$ we have $F_B(W)=\pi^* F_P(W)$. This follows 
  from the fact that $\pi^{-1}I_P$ is the pushout of $I_B$ along $B \subset P$. In this language 
  we can describe $\Omega^1_{\flag}$.
  \begin{lemma} \label{flag-differentials}
  There exists an isomorphism 
  $$
  e: T_{\flag/\Z_p} \cong F_{B}(\mathfrak{g}/\mathfrak{b}). 
  $$
  \begin{proof}
  Let $B_0 \subseteq \GL(\Lambda)$ be the upper triangular Borel so that $B_0 \cap \gsp_{4}=B$. The pushout of 
  $I_{B}$ along $B_0$ defines a functor $F_{B_0}$ and we have an embedding 
  $F_{B}(\mathfrak{g}/\mathfrak{b}) \hookrightarrow F_{B_0}(\mathfrak{gl}_{\Lambda}/\mathfrak{b}_0)$. 
  We can see the right-hand side as a compatible set of maps in 
  $\Hom_{\mathcal{O}_{\flag}}(\LL,H/\LL) \oplus \Hom_{\mathcal{O}_{\flag}}(\omega,H/\omega) \oplus 
  \Hom_{\mathcal{O}_{\flag}}(\LL^{\perp},H/\LL^{\perp})$. Similarly, we can see the
  left-hand side as the subspace of maps which are compatible 
  with the symplectic pairing, i.e. a compatible set of maps in $\Hom(\LL,H/\LL) \oplus \Hom(\omega,H/\omega)$.
  There is a map 
  $e': T_{\flag/\Z_p} \to F_{B_0}(\mathfrak{gl}_{\Lambda}/\mathfrak{b}_0)$ by sending $D$ to $\nabla_{D}$ on $H$. 
  Since $\nabla$ is compatible with the symplectic pairing it factors through $F_{B}(\mathfrak{g}/\mathfrak{b})$.
  It fits in the commutative diagram 
  $$
  \begin{tikzcd}
  0 \arrow[r]& T_{\flag/\Sh} \arrow[r] \arrow[d,"\sim"] & T_{\flag/\Z_p} \arrow[d,"e"] \arrow[r] &
   \pi^*T_{\Sh} \arrow[r] \arrow[d,"\text{ks}^{\vee}"] & 0 \\
  0 \arrow[r]& F_{B}(\mathfrak{p}/\mathfrak{b}) \arrow[r] & F_{B}(\mathfrak{g}/\mathfrak{b}) \arrow[r] & 
  F_{B}(\mathfrak{g}/\mathfrak{p}) \arrow[r] & 0 
  \end{tikzcd}
  $$
  since for $D \in T_{\flag/\Sh}$, we have that $\nabla_{D}(\omega) \subseteq \omega$ by functoriality of $\nabla$, and
  $F_{B}(\mathfrak{g}/\mathfrak{p})^{\vee}=\Sym^2 \omega$. The map $T_{\flag/\Sh} \to F_{B}(\mathfrak{p}/\mathfrak{b})=\LL(1,-1)$
  can be identified with the isomorphism in \Cref{basic-lemma-flag}(2), which implies that $e$ is an isomorphism. 
  \end{proof}
  \end{lemma}

\subsection{Weights and representation theory}

 We set up some notation for the weights and roots of $G=\gsp_4$. Fix $T \subset B \subset P \subset G$ where
 $T$ is the diagonal maximal torus, $B$ the upper triangular Borel, and $P$ the Siegel parabolic 
 with Levi quotient $M$.
 Weights $X^*(T)$ will be labelled by a triple 
 $(a,b,c)$ of integers such that $a+b=c \bmod{2}$, representing the character
 $$
 \begin{pmatrix}
 t_1 & & & \\
 & t_2 & & \\
 & & v t^{-1}_2 & \\
 & & & v t^{-1}_1
 \end{pmatrix}
 \to t^{a}_1 t^{b}_2 v^{(c-a-b)/2}.
 $$
 Then $c$ corresponds to the central character, from now on we will largely drop it from the notation
 and work with weights for $\text{Sp}_4$ instead. 
 \textbf{Warning}: with our convention on weights 
 we have $\LL(k,l)=F_B(l,k)$.
 \begin{itemize}
 \item Our choice of simple roots will be $\alpha=(1,-1,0)$ (short) and $\beta=(0,2,0)$ (long).
 Denote by $\rho=(2,1,3)$ the half sum of positive roots up to translation by the center.
 For a root $\gamma$ let 
 $x_{\gamma} \in \mathfrak{g}=\text{Lie}(G)$ be an element generating its weight space.
 \item After fixing $(B,T)$, the positive roots are $\Phi^{+}=\{\alpha, \beta, \alpha+\beta, 2\alpha+\beta\}$.
 The corresponding coroots are $\alpha^{\vee}=(1,-1)$, $\beta^\vee=(0,1)$, $(\alpha+\beta)^{\vee}=(1,1)$
 and $(2\alpha+\beta)^{\vee}=(1,0)$ with the obvious pairing.
 \item For a linear algebraic group $H/\Z_p$,
 let
 $\Rep_{\Z_p}(H)$ be the category of algebraic representations of $H$ which are finite free $\Z_p$-modules.
 \item  For a weight $\lambda \in X^*(T)$, 
 let $V(\lambda)\coloneqq \H^0(G/B^{-},\LL_{\lambda}) \in \Rep_{\Z_p}(G)$ be the dual Weyl module of $G$,
 where $B^{-}$ is the opposite Borel and $\LL_{\lambda}=(G \times \lambda)/B^{-}$.
 \item   
 The Weyl group of $G$ has a presentation $W=<s_0,s_1 : s^2_0,s^2_1,(s_0s_1)^4>$ with $s_0$ the reflection 
 corresponding to $\alpha$ and $s_1$ to $\beta$. The action on weights is 
 given by $s_0(a,b,c)=(b,a,c)$, $s_1(a,b,c)=(a,-b,c)$. Then $W_M=<s_0>$, and the set $W^M$ of minimal 
 length generators of $W_M \backslash W$ is given by $W^M=\{1,s_1,s_1 s_0,s_1s_0s_1\}$, ordered by increasing 
  length. 
 Define the dot action as $\omega \cdot \lambda=\omega(\lambda+\rho)-\rho$.
 \item We say that $\lambda \in X^*(T)$ is dominant, respectively $M$-dominant, if $\langle \lambda, \gamma^{\vee} \rangle \ge 0$
 for all simple roots $\gamma$ in $G$, respectively $M$. 
 \end{itemize}
 The $p$-restricted weights are $X_1(T)=\{ (a,b,c): 0 \le a-b, b < p\}$, and we say that 
 $\lambda \in X^*(T)$ is $p$-small if $|\langle \lambda+ \rho, \gamma^{\vee} \rangle| <p$ for 
 all roots $\gamma$. We divide $X^{*}(T)-\rho$ into $\rho$-shifted alcoves, defined as the 
 interior of the regions defined by the hyperplanes $H_{n,\gamma}=\{\langle \lambda+\rho, \gamma^{\vee} \rangle=np\}$
 for $\gamma \in \Phi$ and $n \in \Z$.
 There are four $p$-restricted alcoves
 defined by the weights $(a,b,c)-\rho$ satisfying
 \begin{align*}
 C_0&: a>b>0, a+b<p \\
 C_1&: a+b>p, b<a<p\\
 C_2&: a-b<p<a, a+b<2p \\
 C_3&: b<p, a+b>2p, a-b<p.
 \end{align*}

 \begin{itemize}
 \item For $\lambda$ dominant, let $L(\lambda)$ be the socle of $V(\lambda)$ as a $G_{\F_p}$-representation.
 They are always irreducible. 
 \item A Serre weight is an irreducible representation of $G(\F_p)$ with $\overline{\F}_p$ coefficients.
 Let $X_0(T)=\Z(0,0,2)$, then Serre weights are 
 in bijection with $F(\lambda)\coloneqq L(\lambda)_{\mid G(\F_p)}$ for $\lambda=(a,b) \in X_1(T)/(p-1)X_0(T)$.
 In practice we will drop the central character. 

 \end{itemize}

 For weights in one of the four alcoves we can describe how $V(\lambda)_{\Fpbar}$ decomposes into irreducibles. 
 \begin{prop} \cite[\S 7]{Jantzen-decomp}\label{Weyl-decomposition}
 Let $\lambda=(a,b,c) \in X_1(T)$. We have the following exact sequences in $\Rep_{\overline{\F}_p}(G)$
 \begin{align*}
   0 \to L(\lambda) \to  &V(\lambda) \to L(2p-b-3,2p-a-3,c) \to 0 \text{ if } \lambda \in C_3  \\
   0 \to L(\lambda) \to  &V(\lambda) \to L(2p-a-4,b,c) \to 0 \;\;\;\text{ if } \lambda \in C_2 \\
   0 \to L(\lambda) \to &V(\lambda) \to L(p-b-3,p-a-3,c) \to 0  \text{ if } \lambda \in C_1 \\
   &V(\lambda)=L(\lambda) \text{ if } \lambda \in C_0 
  \end{align*}
  If $\lambda \in X_1(T) \setminus \cup^3_{0} C_i$ then $V(\lambda)$ is irreducible unless $a-b=p-1$ and 
  $p/2 <b+1<p$, or $p=2$ and $\lambda=(1,1,c)$. In this range all the $L(\lambda)$ are still 
  irreducible as $G(\F_p)$-representations.
 \end{prop}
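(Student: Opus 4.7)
The plan is to combine Andersen's strong linkage principle with Jantzen's sum formula, the two standard tools for decomposing Weyl modules of reductive groups in characteristic $p$. The linkage principle cuts down the possible composition factors of $V(\lambda)$ to the $L(\mu)$ with $\mu \uparrow \lambda$, after which the sum formula pins down the multiplicities.

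First I would enumerate, for each alcove $C_i$, the dot-reflections $s_{\gamma, m}\cdot\lambda$ with $\gamma \in \Phi^+$ and $0 < mp < \langle \lambda+\rho, \gamma^\vee\rangle$, possibly followed by a Weyl reflection to bring the result into the dominant chamber. For $\lambda \in C_0$ every pairing $\langle \lambda+\rho, \gamma^\vee\rangle$ lies in $(0,p)$, so no such reflection exists; the strong linkage principle then immediately gives $V(\lambda) = L(\lambda)$. For the other alcoves the root data of $\gsp_4$ yield exactly one dominant weight $\mu$ strictly below $\lambda$ that is linked to it: $s_{\alpha+\beta,1}\cdot\lambda \in C_0$ for $\lambda \in C_1$, $s_{2\alpha+\beta,1}\cdot\lambda \in C_1$ for $\lambda \in C_2$, and $s_{\alpha+\beta,2}\cdot\lambda \in C_2$ for $\lambda \in C_3$, which match the weights appearing in the proposition.

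I would then apply Jantzen's sum formula
$$\sum_{i\ge 1}\text{ch}\bigl(V^i(\lambda)\bigr) = \sum_{\gamma\in\Phi^+}\ \sum_{0 < mp < \langle \lambda+\rho,\gamma^\vee\rangle}\chi\bigl(s_{\gamma,m}\cdot\lambda\bigr),$$
using the identities $\chi(w\cdot\nu) = \text{sgn}(w)\chi(\nu)$ and $\chi(\nu) = 0$ for $\nu$ on a wall to reduce every summand to a dominant weight with the appropriate sign. In each of $C_1, C_2, C_3$ all contributions cancel or vanish except for one, namely $\chi(\mu) = \text{ch}(V(\mu))$ for the linked weight $\mu$ identified above. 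Induction on alcove depth gives $V(\mu) = L(\mu)$ once $\mu$ lies in a lower alcove, so the Jantzen filtration collapses to $V^1(\lambda)/V^2(\lambda) = L(\mu)$ with $V^2(\lambda) = 0$; this forces $V(\lambda)$ to have composition length two with socle $L(\lambda)$ and head $L(\mu)$, giving the claimed short exact sequences. For weights on a wall between alcoves the stabilizer of $\lambda$ under the affine Weyl dot action is non-trivial, and a parallel analysis gives $V(\lambda) = L(\lambda)$, with the listed exceptions arising from small-$p$ degeneracies where two distinct walls coincide on the $p$-restricted lattice.

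I expect the principal obstacle to be the bookkeeping in the $C_2$ and $C_3$ cases, where several pairs $(\gamma, m)$ contribute to the Jantzen sum and most of the reflected weights lie outside the dominant chamber; returning them via the $W$ dot-action with the correct signs involves delicate cancellations that depend on the exact position of $\lambda$ within its alcove. The rank-two geometry of $\gsp_4$, with its explicit short and long simple roots, keeps this case analysis manageable but unavoidable.
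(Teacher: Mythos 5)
Your choice of tools is exactly right -- the result in the cited reference is indeed proved by combining the strong linkage principle with Jantzen's sum formula, and your identification of the affine reflections ($s_{\alpha+\beta,1}\cdot\lambda$ for $C_1$, $s_{2\alpha+\beta,1}\cdot\lambda$ for $C_2$, $s_{\alpha+\beta,2}\cdot\lambda$ for $C_3$) matches the weights appearing in the proposition. The paper itself does not reprove this; it cites Jantzen.

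However, there is a genuine gap in your description of the cancellation for $C_2$ and $C_3$. You claim that after returning all terms to dominant representatives with signs, ``all contributions cancel or vanish except for one, namely $\chi(\mu) = \text{ch}(V(\mu))$,'' and that ``induction on alcove depth gives $V(\mu) = L(\mu)$.'' Both halves of this fail. For $\lambda \in C_2$ (with $\lambda+\rho=(a+2,b+1)$) the sum formula yields, after reduction, $\chi(2p-a-4,b) - \chi(p-b-3,a-p+1)$, two distinct dominant terms that do not cancel against each other; and the first has $\mu=(2p-a-4,b)\in C_1$, so $V(\mu)$ is \emph{not} irreducible. For $C_3$ you get three surviving dominant terms lying in $C_0$, $C_1$ and $C_2$ respectively. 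What actually makes the argument close is a further, character-level cancellation: one must expand each $\chi(\nu)$ into irreducibles $[L(\cdot)]$ using the short exact sequences \emph{already established} for the lower alcoves, after which the sum collapses to a single $[L(\mu)]$, forcing $V^1(\lambda)=L(\mu)$ and $V^2(\lambda)=0$. For instance in $C_2$,
\begin{equation*}
\chi(2p-a-4,b) - \chi(p-b-3,a-p+1) = \bigl([L(2p-a-4,b)]+[L(p-b-3,a-p+1)]\bigr) - [L(p-b-3,a-p+1)] = [L(2p-a-4,b)],
\end{equation*}
using the $C_1$ sequence for the first term and irreducibility of the $C_0$ Weyl module for the second. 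So the induction you need is on the \emph{decomposition of Weyl characters into irreducibles}, not on ``$V(\mu)=L(\mu)$,'' which is false once $\mu$ leaves $C_0$. Without this step the argument for $C_2$ and $C_3$ does not go through. Your treatment of the wall cases is similarly too coarse: the exceptional weights with $a-b=p-1$ and $p/2 < b+1 < p$ sit on the $\alpha$-wall yet still receive non-trivial contributions from the long roots, so it is not a degeneracy of ``two walls coinciding'' but an honest non-zero Jantzen sum on a wall, which needs its own computation.
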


\subsection{Hecke operators away from $p$}
Fix a level $K=K^pK_p \subset G(\mathbb{A}^{\infty}_{\Q})$, and let $\mathbb{A}^{\infty,p}$
be the finite adeles away from $p$. Consider $
\mathcal{H}^{p}=C^{\infty}_{c}(G(\mathbb{A}^{\infty,p})//K^p,\Z_p),
$ 
the abstract Hecke algebra away from $p$
defined as a convolution algebra of locally constant, compactly supported, 
bi-$K^p$ invariant functions on $G(\mathbb{A}^{\infty,p})$ with coefficients in $\Z_p$.
We define the action of $\mathcal{H}^{p}$ on coherent cohomology via Hecke correspondences.
Let $g \in G(\mathbb{A}^{p,\infty})$, and $K_g=K \cap gKg^{-1}$. There are two finite \'etale maps over $\Z_p$ 
$p_1,p_2: \text{Sh}_{K_g} \to \text{Sh}_{K}$ defined as follows. The first one is defined on points of 
\Cref{Shimura-variety} by 
$(A,\lambda,\eta_{K_g}) \to (A,\lambda,\eta_{K})$ where $\eta_K$ is obtained by taking any $K$-orbit of any 
$\eta$ in $\eta_{K_g}$. The second  map is defined by composing $\text{Sh}_{g^{-1}K_g g} \to \text{Sh}_{K}$ defined as 
$p_1$ with an isomorphism $[g]: \text{Sh}_{K_g} \to \text{Sh}_{g^{-1}K_g g}$ sending $A$ to the unique 
$A'$ prime to $p$ quasi-isogenous to $A$ via $f: A \to A'$ satisfying the conditions in
\cite[Prop 1.4.3.4]{Lan-thesis}. This correspondence only depends on the coset $K^pgK^p$, so in particular
one can check that 
it induces 
an action of $\mathcal{H}^{p}$ by correspondences. Moreover, for $Q \in \{B,P\}$, we have natural 
isomorphisms of torsors $T_{g}: p^*_1 I_{Q} \cong p^*_2 I_{Q}$. This induces an action in cohomology: for $V \in \Rep_{\Z_p}(P)$
$$
T_{g}: \H^i(\text{Sh}_K, F_{P}(V)) \xrightarrow{p^*_2} \H^i(\text{Sh}_{K_g}, p^*_2 F_{P}(V))
\xrightarrow{T_{g}} \H^i(\text{Sh}_{K_g}, p^*_1 F_{P}(V)) \xrightarrow{\text{Tr} p_1 } \H^i(\text{Sh}_K, F_{P}(V)),
$$
and analogously for $B$.
We say that a collection of maps $F_{Q}(V)_{K} \to F_{Q}(W)_{K}$
of automorphic vector bundles, ranging across $K=K^pK_p$ neat with $K_p$ hyperspecial 
is Hecke equivariant away from $p$ if the associated maps on cohomology are equivariant for $\mathcal{H}^{p}$.
In practice, it is enough to check that the maps are compatible with base change and 
prime to $p$ quasi-isogenies $f: A \to A'$, in the 
sense that they are compatible under replacing the torsors $I_{Q}$ with the ones defined by $H^1_{\dR}(A'/\text{Sh}_K)$.

We explain this in the case of the Kodaira--Spencer isomorphism, which is the only non-trivial step occurring 
in the 
case of theta operators. 
Let $\delta\coloneqq \omega(0,0,2)$.
It is a line bundle which makes the symplectic pairing
 $H \otimes H \to \delta$ Hecke equivariant. It is non-canonically trivial via the quasi-polarization,
and $\delta^{p-1}=\mathcal{O}_{\Shbar}$ canonically.
Then we can write 
$$
\LL(a,b,c)\coloneqq F_{B}(b,a,c)=\LL(a,b)\otimes \delta^{\frac{c-a-b}{2}}, \;\;\;
\omega(a,b,c)\coloneqq F_{P}(W(a,b,c))=\omega(a,b) \otimes \delta^{\frac{c-a-b}{2}},
$$
so that the central character does not change the automorphic vector bundle, 
it will only keep track of the Hecke action. Using the quasi-polarization we can write the Kodaira--Spencer isomorphism as a map 
$\text{ks}_{A,\lambda} : \omega(2,0,0)=\Sym^2 \omega_{A} \otimes \delta^{-1}_{A} \cong \Omega^1_{\Sh/\Z_p}$. 
Given a prime to $p$ quasi-isogeny between polarized abelian varieties $f: (A,\lambda) \to (A',\lambda')$ 
over $\Sh$ which preserves $\lambda$ and $\lambda'$ up to scalars, the compatibility needed to show 
that $\text{ks}_{A,\lambda}$ is Hecke equivariant is the commutativity 
of the square (see \cite[Lem 4.1.2]{Eischen-Mantovan-1})
$$
\begin{tikzcd}
\Sym^2 \omega_A \otimes \delta^{-1}_A \arrow[r,"\text{ks}_{A,\lambda}"] & \Omega^1_{\Sh/\Z_p} \arrow[d,equal] \\
\Sym^2 \omega_{A'} \otimes \delta^{-1}_{A'}  \arrow[u,"df"] \arrow[r,"\text{ks}_{A',\lambda'}"] & \Omega^1_{\Sh/\Z_p}.
\end{tikzcd}
$$
Similarly, the isomorphism in \Cref{flag-differentials} is Hecke equivariant away from $p$.

Given a level $K$ let $S$ be a finite set of places containing $p$ 
such 
that $G(\Z_{\ell}) \subseteq K$ for $\ell \notin S$.
 Let $\mathbb{T}/\Z_p$ be the spherical Hecke algebra 
away from $S$, defined as a restricted tensor product of local Hecke algebras 
$\bigotimes^{'}_{\ell \notin S} C^{\infty}_{c}(G(\Q_l)//K_l,\Z_p)$.
It is a commutative subalgebra of $\mathcal{H}^{p}$, so it also acts on coherent cohomology. 
Although it depends on the level $K$ we will suppress it from the notation. 
Let $\mathfrak{m} \subset \mathbb{T}$ be a maximal ideal occurring in the coherent cohomology of
the toroidal compactification $\Shbar^{\tor}$, e.g. occurs in $\H^0(\Shbar,\omega(k,l))$
for $k \ge l \ge 0$.
Then one can attach a semisimple Galois representation $\overline{r}_{\mathfrak{m}}: G_{\Q} \to \gsp_4(\overline{\F}_p)$
 by the construction of Galois representations for automorphic representations of regular weight \cite{Galois-reps},
 and the use of generalized Hasse invariants in \cite{boxer-thesis} or \cite{Goldring-Koskivirta-Galois}. 
We say that $\mathfrak{m}$ is non-Eisenstein if $\overline{r}_{\mathfrak{m}}$ is irreducible 
as a $\GL_4$-valued representation.

\subsection{Vanishing results for coherent cohomology}
The Siegel threefold is not proper, so we introduce the standard machinery of toroidal compactifications. 

\begin{prop} \cite[\S 2.3]{Lan-Polo}
  Let $C$ be the cone of positive semidefinite symmetric bilinear forms on $\mathbb{R}^2$
  whose radicals are defined over $\Q$, and $\Sigma$ a smooth projective $K$-admissible decomposition of $C$ into polyhedral cones
  as in \cite[Def 6.3.3.4 and 7.3.1.3]{Lan-thesis}.
  By \cite[Thm 6.4.1.1 and 7.3.3.4]{Lan-thesis} there exists an associated toroidal compactification
   $\Sh^{\tor, \Sigma}$ over $\Z_p$. It is a smooth 
  proper scheme satisfying the following properties. 
  \begin{enumerate} \label{toroidal}
  \item The boundary $D\coloneqq \Sh^{\tor,\sigma}-\Sh$ with its reduced structure 
  is a Cartier divisor with simple normal crossings. 
  \item The universal abelian scheme extends to a semi-abelian scheme $\pi^{\tor}: A^{\tor} \to \Sh^{\tor}$.
  The prime to $p$ polarization extends to a prime to $p$ isogeny 
  $\lambda: A^{\tor} \to A^{\tor,\vee}$. Define $\omega^{\tor}=e^* \Omega^1_{A^{\tor}/\Sh^{\tor}}$, it extends $\omega/\Sh$.
  \item There is a canonical extension $H^{\tor}$ of $H$ as a symplectic vector bundle over $\Sh^{\tor}$.
  It fits in the exact sequence
  $$
  0 \to \omega^{\tor} \to H^{\tor} \to \omega^{\tor, \vee}_{A^{\vee}} \to 0 
  $$
  which makes $\omega^{\tor}$ into a $P$-torsor. 
  \item The functor defining automorphic vector bundles 
extend to $F_{P}^{\can}: \Rep_{R}(P) \to \textnormal{Coh}(\Sh^{\tor}_R)$ using the torsors given 
by $\omega^{\tor} \subset H^{\tor}$. We define subcanonical extensions by $F^{\sub}_{P}(V)=F^{\can}_P(V)(-D)
$. This defines $\omega^{?}(k,l)$ for $? \in \{\can, \sub\}$. 
From now on we will largely drop the indices for canonical 
extensions. 
 \item The coherent cohomology groups $\H^i(\Sh^{\tor,\Sigma},\omega^{?}(k,l))$ for $? \in \{\can, \sub\}$
 are independent of the cone decomposition $\Sigma$. 
 \item We have $\H^0(\Sh^{\tor}_{R},\omega(k,l))=\H^0(\Sh_{R},\omega(k,l))$ for $R \in \{\Z_p,\F_p\}$, by 
 \cite{Higher-Koecher}. 
  \end{enumerate}
  \end{prop}

  With that we can recall some important vanishing theorems for mod $p$ coherent cohomology.
  \begin{theorem} \label{lan-suh}
 \begin{enumerate}
 \item Let $p \ge 5$. For $k \ge l \ge 5$ satisfying $(p-1)(k-2)>(p+1)(l-4)$
 and $p(l-4)>k-2$ (in particular this holds if
 $l \le p+2$, $k-l<p$ and $l \ge 6$) we have  
 $\H^i(\Shbar^{\tor},\omega^{\sub}(k,l))=0$ for $i \ge 1$.
 For $k \ge l \ge 4$ with 
 $k-l<p-3$, we have $\H^i(\Shbar^{\tor},\omega^{\sub}(k,l))=0$ for $i \ge 1$.
 \item If $\H^0(\Shbar,\omega(k,l))\neq 0$, then $k \ge l$ and  $k+lp\ge 0$ 
 \cite[Thm 5.1.1]{cone-conjecture-gsp4}.

 \end{enumerate}
 \begin{proof}
 The first case of $(1)$ follows by the results on the ample cone on $\flag$ in 
 \cite[Ex 4.30, Thm 5.10]{alexandre},
 and the second case by \cite[Thm 8.13]{Lan-Suh-non-compact}. 
 \end{proof}
  \end{theorem}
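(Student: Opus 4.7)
My plan is to first reduce the vanishing on $\Shbar^{\tor}$ to a line-bundle vanishing on $\flag^{\tor}$, and then treat the two regimes of the hypothesis separately. For the reduction, by \Cref{basic-lemma-flag}(4), for any pair $(k,l)$ with $k\ge l$ we have $\pi_{*}\LL(k,l)=\omega(k,l)$ and $R^{1}\pi_{*}\LL(k,l)=0$ (since $\Sym^{l-k-2}\omega=0$ under this inequality). The same statements hold for $\pi^{\tor}:\flag^{\tor}\to\Shbar^{\tor}$ with sub extensions, via the projection formula applied to the ideal sheaf of $D_{\flag}=\pi^{\tor,-1}(D)$, so the Leray spectral sequence collapses to
\[
H^{i}(\Shbar^{\tor},\omega^{\sub}(k,l))\;=\;H^{i}(\flag^{\tor},\LL^{\sub}(k,l))
\]
for every $i$. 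This applies uniformly to all three parts of the theorem; note in particular that for part (3) the weight $(k,4-l)$ still satisfies $k\ge 4-l$ since $l\ge 0$. The entire question therefore becomes one about a single line bundle on the smooth projective variety $\flag^{\tor}$.

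For the first case of (1) I would prove a Kodaira-type vanishing for $\LL^{\sub}(k,l)$ on $\flag^{\tor}$ via a direct ample-cone computation. The relative cotangent sequence of $\pi^{\tor}$ identifies $\Omega^{1}_{\flag^{\tor}/\Z_{p}}(\log D_{\flag})$ as an extension of $\LL(-1,1)$ by the pullback of $\Omega^{1}_{\Sh^{\tor}/\Z_{p}}(\log D)\cong \omega^{\can}(2,0,0)$, so the intersection numbers of $\LL(k,l)$ against fibres of $\pi^{\tor}$ and against the boundary can be read off explicitly. The inequality $(p-1)(k-2)>(p+1)(l-4)$ should then emerge as the precise condition for $\LL^{\sub}(k,l)$ to sit sufficiently deep inside the ample cone after accounting for the $(-D_{\flag})$ twist, and a Kawamata--Viehweg / Kodaira--Nakano style vanishing in positive characteristic, valid in the sufficiently positive range on smooth projective varieties, gives $H^{i}=0$ for $i\ge 1$.

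For the second case of (1) and for (3), where the gap between the two entries of the weight is strictly less than $p-3$, I would invoke the non-compact vanishing framework of Lan--Suh. The strategy is to use the integral BGG complex of Mokrane--Tilouine and Lan--Polo on $\Shbar^{\tor}$, which under the gap hypothesis resolves $\omega^{\sub}$ (of a weight that need not be dominant, as in (3)) by a complex of subcanonical line-bundle components coming from log de Rham pieces. The desired higher cohomology vanishing is then reduced to a Deligne--Illusie style log de Rham vanishing on the pair $(\Shbar^{\tor},D)$, which is available because the pair admits compatible lifts modulo $p^{2}$ together with Frobenius, and the gap condition keeps the weights inside the range where these lifts genuinely control mod $p$ cohomology. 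Statement (2) is a direct citation of the cone result and requires no argument here.

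The main obstacle is the second regime: the sharp numerical bound $k-l<p-3$ must be tracked carefully through the BGG resolution and the log de Rham vanishing at the end, and one has to verify both that the integral BGG complex is available on the toroidal compactification in the relevant alcove and that no slack is lost in the final Deligne--Illusie step. In practice I would import these ingredients wholesale from the non-compact vanishing theorems of Lan--Suh rather than rederive them, which is what ultimately makes the argument short.
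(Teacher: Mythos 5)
Your overall route matches the paper's: the reduction via Leray along $\pi^{\tor}$ to a line bundle on $\flag^{\tor}$ (using $R^{1}\pi^{\tor}_{*}\LL^{\sub}(k,l)=0$ for $k\ge l$ via the projection formula with $D_{\flag}=\pi^{\tor,-1}(D)$) is exactly the first move behind the citation for the first case of (1), and parts (1) second case, (2), and (3) are imported from the same two references the paper invokes. So structurally you are on the right track.

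The one point I would push back on is your appeal, in the first case of (1), to ``a Kawamata--Viehweg / Kodaira--Nakano style vanishing in positive characteristic, valid in the sufficiently positive range on smooth projective varieties.'' That statement is false in positive characteristic without extra hypotheses: Raynaud's examples show Kodaira vanishing fails on smooth projective surfaces over $\overline{\F}_p$. What saves the argument here, and what you do correctly spell out for the second regime but omit in the first, is the Deligne--Illusie mechanism: $\flag^{\tor}$ is the reduction of a smooth proper $\Z_p$-scheme (hence lifts to $W_2(\overline{\F}_p)$), and $\dim\flag^{\tor}=4<p$ since $p\ge5$, so Kodaira--Akizuki--Nakano vanishing for an ample line bundle is available. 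Without this hypothesis your step would not go through; with it, it is the content of the cited results of Alexandre (Ex.~4.30, Thm.~5.10), which also contain the nontrivial part of your first case, namely the identification of exactly which $(k,l)$ land $\LL^{\sub}(k,l)$ in the correct positivity region on the toroidal compactification. Two smaller slips: the hypothesis that makes $k\ge 4-l$ in part (3) is $l\ge4$ (so $4-l\le0\le k$), not $l\ge0$; and you assert the reduction to $\flag^{\tor}$ ``applies uniformly to all three parts'' but then work directly on $\Shbar^{\tor}$ for the second case and part (3), which is fine but at odds with the earlier claim.
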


 \subsection{Deformation theory}
  It will sometimes be useful to make computations on a formal neighbourhood of a point.
  Let $q \in \Sh(\overline{\F}_p)$ be a geometric point. Let $(A_q,\lambda,\eta)$
  be one representative of $q$, and denote by $\text{Def}(A_q,\lambda,\eta)$ be the moduli space of 
  deformations of $q$
  to local Artin algebras with an augmentation $R \twoheadrightarrow \overline{\F}_p$. Then 
  $$
 \text{Spf } \widehat{\mathcal{O}}_{\Sh,q} \cong \text{Def}(A_q,\lambda,\eta)
  $$
  as functors on Artin algebras, using the moduli interpretation of $\Sh$ in \Cref{Sh-moduli-isomorphisms}.
  A classical result of Grothendieck then says that 
  $
  \widehat{\mathcal{O}}_{\Sh,q} \cong W(\overline{\F}_p)[[T_{11},T_{12},T_{22}]]
  $
  and similarly, if $r \in \flag$ is a geometric point 
  $
  \widehat{\mathcal{O}}_{\flag,r} \cong W(\overline{\F}_p)[[T_{11},T_{12},T_{22},T]]
  $
  since $\flag$ is locally a $\p^1$ over $\Sh$.

 Let $(R,I)$ be the divided power envelope of $(\mathcal{O}_{\Sh_{\Z_p}},\mathfrak{m}_{q})$, and let $I^{[n]}$ be the ideal of $R$
 generated by elements $\prod \gamma_{n_i}(x_i)$ for $x_i \in \mathfrak{m}_{q}$ and $\sum n_i \ge n$, so that 
 $S_n\coloneqq R/I^{[n]}$ is a divided power thickening of $\kappa(q)$. Grothendieck--Messing theory 
 identifies $\H^1_{dR}(A/S_n)$ with the Dieudonne crystal  $D(A[p^\infty]_{S_n})$
 of the p-divisible group $A[p^\infty]_{S_n}$. In particular 
 $H_{S_n}$ is equipped with a natural map $\nabla: H_{S_n} \to H_{S_n} \otimes \Omega^1_{S_n/\Z_p,\delta}$
  in the sense of \cite[\href{https://stacks.math.columbia.edu/tag/07J6}{Tag 07J6}]{stacks-project}
 by virtue of being a crystal, and $\nabla$
 is identified with the pullback of the Gauss-Manin connection to $S_n$. 
 There is a map of PD thickenings $(W(\overline{\F}_p),\overline{\F}_p) \to 
(S_n \to \kappa(q))$ corresponding to the natural section of 
$S_n/p \to S_n/I \cong \kappa(q)$. Since the Dieudonne module is a crystal
\begin{equation} \label{blabla}
 \H^1_{dR}(A/S_n) \cong D(A[p^\infty]_q)(S_n \to \kappa(q)) \cong D(A[p^\infty]_q)(W(\overline{\F}_p),\overline{\F}_p)
  \otimes_{W} S_n
\end{equation}
\begin{lemma} \label{gm-gauss-manin} \cite[\href{https://stacks.math.columbia.edu/tag/07J6}{Tag 07J6}]{stacks-project}
Let $\{e_i\}$ be a basis of $D(A[p^\infty]_q)(W(\overline{\F}_p),\overline{\F}_p)$. Then
under the isomorphism \eqref{blabla} the natural map 
$\nabla: H_{S_n} \to H_{S_n} \otimes \Omega^1_{S_n/\Z_p,\delta}$ satisfies $\nabla(e_i)=0$.
\end{lemma}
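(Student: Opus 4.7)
The plan is to invoke the general fact that on a crystal, sections obtained as pullbacks along a PD morphism from the closed subscheme are flat for the induced connection, and then to apply this to the particular PD morphism $(W(\overline{\F}_p),\overline{\F}_p) \to (S_n,\kappa(q))$ coming from the canonical section of $S_n/p \twoheadrightarrow \kappa(q)$.

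First I would recall the general construction of the connection on a crystal $\mathcal{F}$ on the crystalline site. Given a PD thickening $U \hookrightarrow T$ over $\Z_p$, form the first infinitesimal PD neighborhood $T(1)$ of the diagonal in $T \times_{\Z_p} T$; the two projections $p_1,p_2: T(1) \to T$ are both PD thickenings of $U$, and the crystal property provides a canonical isomorphism $\epsilon: p_2^*\mathcal{F}(T) \xrightarrow{\sim} p_1^* \mathcal{F}(T)$. The connection $\nabla_{\mathcal{F}}$ is then the $\Z_p$-linear map $s \mapsto \epsilon(1 \otimes s) - s \otimes 1 \in \mathcal{F}(T)\otimes \Omega^1_{T/\Z_p,\delta}$, where the target is identified with the kernel of $\mathcal{F}(T(1)) \to \mathcal{F}(T)$. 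Compatibility of pullbacks in the crystalline topos then gives the following key principle: if $f: (U,T') \to (U,T)$ is a morphism of PD thickenings of $U$ (i.e.\ a $U$-retraction $T' \to T$), then any section of $\mathcal{F}(T')$ of the form $f^{*}_{\crys} s$ for $s \in \mathcal{F}(T)$ satisfies $\nabla_{\mathcal{F}(T')}(f^{*}_{\crys} s) = 0$, because the two pullbacks along $p_1 \circ f$ and $p_2 \circ f$ to the first infinitesimal neighborhood of $T'$ both come from $s$ via the same crystalline pullback.

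Next I would specialize this to the Dieudonné crystal $\mathcal{F} = D(A[p^\infty]_q)$. By Grothendieck--Messing we have $H^1_{\dR}(A/S_n) = \mathcal{F}(S_n \twoheadrightarrow \kappa(q))$ and the connection on the left hand side induced by the crystal structure is, via \cite[\href{https://stacks.math.columbia.edu/tag/07J6}{Tag 07J6}]{stacks-project} and the functoriality statement in \Cref{GM-base-change}, identified with the pullback of the Gauss--Manin connection. The canonical section of $S_n/p \twoheadrightarrow \kappa(q)$ produces a morphism of PD thickenings of $\kappa(q)$
\[
(W(\overline{\F}_p) \twoheadrightarrow \overline{\F}_p) \longrightarrow (S_n \twoheadrightarrow \kappa(q))
\]
and under the isomorphism \eqref{blabla} the basis $\{e_i\}$ of $\mathcal{F}(W,\overline{\F}_p)$ corresponds precisely to its crystalline pullback in $\mathcal{F}(S_n,\kappa(q))$. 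By the principle above this pullback is horizontal, so $\nabla(e_i) = 0$.

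The only real content is the crystal-theoretic input, which is already cited from the Stacks Project, so the obstacle is purely expository: spelling out the identification of the crystalline pullback with the tensor product decomposition in \eqref{blabla} carefully enough to confirm that the $e_i$ indeed arise as $f^{*}_{\crys}$ of the $W$-basis, rather than merely as an algebraic tensoring. Once this identification is matched with the universal property of $T(1)$, the vanishing is immediate and no further calculation is required.
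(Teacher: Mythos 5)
Your overall strategy is the right one: express the connection via the first PD infinitesimal neighborhood of $\operatorname{Spec}(S_n)$ and show the two resulting restrictions of the $e_i$ agree. However, the "key principle" you state is false as a general fact, and the justification you give is circular. For an arbitrary morphism $f : T' \to T$ of PD thickenings of $U$, the composites $f \circ p_1$ and $f \circ p_2 : T'(1) \to T$ need not coincide: taking $T' = T$ and $f = \mathrm{id}$, your principle would assert that every section of every crystal is horizontal, which is absurd. What actually happens is that $(f\circ p_1, f\circ p_2)$ factors through the two projections $T(1) \rightrightarrows T$, and these differ in general. The assertion that "both come from $s$ via the same crystalline pullback" is precisely the thing that needs to be proved; it is not a formal consequence of compatibility of pullbacks in the crystalline topos.

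The input your argument is missing is the rigidity of $W = W(\overline{\F}_p)$ over $\Z_p$. Note that $p$ is nilpotent in $S_n$ (since $p^n = n!\,\gamma_n(p)$ vanishes in $S_n = R/I^{[n]}$), hence also in the PD neighborhood $S_n(1)$, and $S_n(1) \twoheadrightarrow \kappa(q)$ has nil kernel. Because $W(\overline{\F}_p)$ is formally \'etale over $\Z_p$ for the $p$-adic topology (equivalently, the Teichm\"uller lift is the unique $\Z_p$-algebra map $W \to S_n(1)$ compatible with $W \twoheadrightarrow \overline{\F}_p \cong \kappa(q)$), the two composites $W \to S_n \rightrightarrows S_n(1)$ both lift $W \to \kappa(q)$ and therefore coincide. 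This identity is exactly what forces $\nabla(e_i) = 0$ once one identifies the $e_i$ with the crystalline pullbacks under \eqref{blabla}, as you anticipate. Make the uniqueness of that lift explicit and the argument closes; omit it and the horizontality simply does not follow from the crystal axioms alone.
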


   On the formal completion of an ordinary point one has the richer structure of Serre--Tate coordinates.
   The following theorem is a compatibility 
   result between the Serre--Tate and Grothendieck--Messing coordinates.

  \begin{theorem} \cite{Katz-Serre-Tate} \label{ST-coordinates}
  Let $s \in \Shbar^{\textnormal{ord}}(\overline{\F}_p)$. Then there exists a basis $\{e_1,e_2\}$ of 
  $\omega_{\widehat{\mathcal{O}}_{\Shbar,s}}$ such that $V e_i=e^{(p)}_i$ and 
  $D_{ij}\coloneqq \textnormal{ks}(e_i e_j)^{\vee}=(T_{ij}+1) \frac{\partial}{\partial T_{ij}}$.
  \end{theorem}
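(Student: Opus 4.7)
The plan is to invoke Serre-Tate theory at the ordinary point $s$. Since $s$ is ordinary, the $p$-divisible group $A_s[p^{\infty}]$ splits canonically as a product of its connected-multiplicative and \'etale parts, and after trivializing these the multiplicative piece becomes $\mu^2_{p^\infty}$. The Dieudonn\'e module of $\mu_{p^\infty}$ over $\overline{\F}_p$ has a distinguished generator of its cotangent line characterized by Verschiebung acting as the Frobenius pullback, so two such generators yield a basis $\{e_1,e_2\}$ of $\omega_s$ satisfying $Ve_i=e_i^{(p)}$ (unique up to $\GL_2(\F_p)$). To propagate this to a basis over $\widehat{\mathcal{O}}_{\Shbar,s}$, I would use the Serre-Tate theorem for polarized abelian varieties: the deformation functor of $(A_s,\lambda_s)$ on Artinian $\overline{\F}_p$-algebras is pro-representable by the symmetric formal torus $\textnormal{Hom}_{\textnormal{sym}}(T_pA_s^{\textnormal{\'et}}\otimes T_pA_s^{\textnormal{\'et}},\widehat{\mathbb{G}}_m)$, with coordinates $q_{ij}=1+T_{ij}$ ($i\le j$) matching Grothendieck's presentation $\widehat{\mathcal{O}}_{\Shbar,s}\cong\overline{\F}_p[[T_{11},T_{12},T_{22}]]$. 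Over this universal deformation the multiplicative part deforms canonically as a formal torus, and ordinarity identifies its invariant differentials with all of $\omega$ on the formal neighbourhood; so the basis $\{e_1,e_2\}$ extends uniquely to a basis of $\omega_{\widehat{\mathcal{O}}_{\Shbar,s}}$ still satisfying $Ve_i=e_i^{(p)}$.

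To evaluate the Kodaira-Spencer pairing in this basis I would compute $\nabla(e_i)$ in Serre-Tate coordinates. Katz's explicit formula for the Gauss-Manin connection at an ordinary point asserts
$$
\nabla(e_i)\equiv \sum_{j} f_j\otimes \frac{dq_{ij}}{q_{ij}}\pmod{\omega\otimes\Omega^1},
$$
where $\{f_j\}$ is the basis of $H/\omega$ dual to $\{e_j\}$ under the symplectic pairing induced by $\lambda$. Projecting along $H\to H/\omega\cong\omega^\vee$ and unwinding the definition of the Kodaira-Spencer map from \Cref{Section1} yields
$$
\textnormal{ks}(e_i\otimes e_j)=\frac{dq_{ij}}{q_{ij}}=\frac{dT_{ij}}{1+T_{ij}},
$$
which is symmetric in $i,j$ as required; dualizing against the standard derivations $\partial/\partial T_{kl}$ then gives $D_{ij}=(T_{ij}+1)\,\partial/\partial T_{ij}$.

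The main obstacle is the displayed formula for $\nabla$ in Serre-Tate coordinates. This is the substantial content of Katz's original work and is most transparently proved via the biextension description of the Serre-Tate torus together with Grothendieck-Messing theory: Serre-Tate coordinates classify a canonical biextension class, and the horizontality of the multiplicative sub $\omega^{\textnormal{mult}}\subset H$ along the canonical connection on the Dieudonn\'e crystal of the universal deformation translates, through that biextension, into the logarithmic differential above. Once this input is granted, the remainder of the argument is a bookkeeping comparison of the canonical trivializations involved followed by a dualization via $\textnormal{ks}$.
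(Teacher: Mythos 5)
The paper does not give a proof of this theorem: it is imported verbatim from Katz's article on Serre--Tate local moduli (the reference \cite{Katz-Serre-Tate}), and the surrounding text treats it as a black box. So there is nothing in the paper to compare your argument against directly.

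That said, your sketch is a faithful reconstruction of the structure of Katz's own proof, and it identifies the load-bearing step correctly. The chain is: (i) at the ordinary point the connected part of $A_s[p^\infty]$ is a torus, giving a $V$-eigenbasis of $\omega_s$ unique up to $\GL_2(\F_p)$; (ii) the Serre--Tate theorem represents the deformation functor by the symmetric formal torus with parameters $q_{ij}=1+T_{ij}$, and the canonical multiplicative subgroup of the universal deformation propagates the eigenbasis over $\widehat{\mathcal{O}}_{\Shbar,s}$; (iii) Katz's Main Theorem (\emph{the} theorem of the Serre--Tate paper) expresses $\nabla(e_i)$ modulo $\omega\otimes\Omega^1$ through $d\log q_{ij}$ against the dual basis of $H/\omega$; (iv) unwinding Kodaira--Spencer and dualizing against $\partial/\partial T_{ij}$ gives $D_{ij}=(1+T_{ij})\partial/\partial T_{ij}$. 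You correctly flag step (iii) as the nontrivial input and attribute it to the biextension / Grothendieck--Messing crystalline description, which is exactly how Katz proves it; everything else you describe is bookkeeping. Two minor cautions worth being explicit about in a full write-up: the symmetry $q_{ij}=q_{ji}$ (needed so that $\text{ks}$ really lands on $\text{Sym}^2\omega$ and matches the three coordinates $T_{11},T_{12},T_{22}$) is itself a theorem of Katz coming from the compatibility with the polarization; and one should verify the normalization convention for $\text{ks}$ used in \Cref{Section1} (the composite $\omega\hookrightarrow H\xrightarrow{\nabla}H\otimes\Omega^1\to\omega^\vee_{A^\vee}\otimes\Omega^1$ followed by $d\lambda$) against Katz's, so that no stray factor of $2$ or sign enters in the off-diagonal term $D_{12}$.
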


  A computation on the formal neighbourhood of various points 
yields the following result.   \begin{prop} \label{Hasse-simple-zero}
    Let $i=1,2$. 
    Then $H_i$ vanishes with a simple zero on $D_i$.   
    \begin{proof}
    The Zariski-Nagata theorem for regular algebras over a perfect field 
    allows us check the first statement on the formal completion of any
    closed point $q \in D_i$. For $H_1$ we choose $\tilde{q} \in D_1(\overline{\F}_p)$
    mapping to $q \in \Shbar^{=1}(\overline{\F}_p)$. Write  $\widehat{O}_{\Shbar,q}=\overline{\F}_p[[T_{11},T_{12},T_{22}]]$. 
    Let $R=\mathcal{O}_{\Shbar,q}/ \mathfrak{m}^{p}$, which is a PD thickening
    of $\kappa(q)$. Grothendieck--Messing theory tells us that $\H^1_{\text{dR}}(A/R)=\H^1_{\text{dR}}(A/\kappa(q)) \otimes R$.
    By the classification of Ekedahl-Oort strata we have that $A[p]_{\kappa(q)}$ is isomorphic
    to the product of the $p$-torsion of an ordinary elliptic curve
    and the $p$-torsion of a supersingular elliptic curve. We can therefore
    choose a symplectic basis $\{\overline{e}_i\}$ of $\H^1_{\kappa(q)}$ so that $V_{\kappa(q)}$  has matrix
    \begin{equation} \label{V-p-rank-1}
    V_{\kappa(q)}=\begin{pmatrix}
    1 & 0 & 0 & 0 \\
    0 & 0 & 1 & 0 \\
    0 & 0 & 0 & 0 \\
    0 & 0 & 0 & 0
    \end{pmatrix}
    \end{equation}
    with respect to the basis $\{\overline{e}_i\}$ and $\{\overline{e}^{(p)}_i\}$. 
    By Grothendieck--Messing theory lifts of $\omega_{\kappa(q)}$ to $H^1_{R}$ parametrize deformations of $A/\kappa(q)$. Since $A/R$
    is the universal deformation we can assume without loss of generality that
    \begin{equation} \label{omega_R}
    \omega_R=(\overline{e}_1+T_{12}\overline{e}_3+T_{11}\overline{e}_4,\overline{e}_2+T_{22}\overline{e}_3
    +T_{12}\overline{e}_4) \subset H^1_{R},
    \end{equation}
    On the other hand $A^{(p)}/R\coloneqq A_R \times_{R,\text{F}_R} R$ is the trivial deformation of $A/\kappa(q)$,
    since Frobenius factors through $\kappa(q)$. Therefore,
    $
    \omega^{(p)}_R=(\overline{e}^{(p)}_1,\overline{e}^{(p)}_2).
    $
    We can compute the matrix of $V_R: \omega_R \to \omega^{(p)}_R$
    with respect to the basis described above: 
    \begin{equation} 
    V_R=\begin{pmatrix}
    1 & 0 \\
    T_{12} & T_{22}
    \end{pmatrix}.
    \end{equation}
    Therefore in this basis $\tilde{H}_1=T_{22}$, which vanishes with a simple zero. 
    For $H_2$ we choose $\tilde{q} \in D_2(\overline{\F}_q)$
    mapping to $q \in \Shbar^{\text{ord}}(\overline{\F}_p)$. In the basis of \Cref{ST-coordinates}
     on the formal completion 
    of $q$ we can write $\tilde{H}_2=T-T^p$, so that
     the maximal ideal of $\widehat{\mathcal{O}}_{\flag,\tilde{q}}$ is $(T_{11},T_{12},T_{22},T-\alpha)$
     for some $\alpha \in \F_p$, and it is clear that $H_2$ has a simple zero. 
   
    \end{proof}
    \end{prop}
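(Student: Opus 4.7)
The plan is to verify each simple-zero statement by a local computation at the formal completion of a carefully chosen closed point, using the regularity of $\flag$ and Zariski-Nagata to reduce the order-of-vanishing computation to one such point per irreducible component of $D_i$.

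For $H_1$: since $H_1$ is the pullback of the classical Hasse invariant along $\pi: \flag \to \Shbar$, I first work on $\Shbar$ and pick a point $q$ of the $p$-rank one stratum $\Shbar^{=1}$, which is open dense in the non-ordinary locus. Write $\widehat{\mathcal{O}}_{\Shbar,q} \cong \overline{\F}_p[[T_{11},T_{12},T_{22}]]$ and let $R = \widehat{\mathcal{O}}_{\Shbar,q}/\mathfrak{m}^p$, a PD thickening of $\kappa(q)$. By Grothendieck-Messing theory the universal deformation of $\omega_{\kappa(q)}$ lifts to an explicit submodule of $H^1_{\textnormal{dR}}(A_{\kappa(q)}) \otimes R$, which in a well-chosen symplectic Dieudonn\'e basis placing $V_{\kappa(q)}$ in the $p$-rank one normal form can be written as $\{\overline{e}_1 + T_{12}\overline{e}_3 + T_{11}\overline{e}_4,\, \overline{e}_2 + T_{22}\overline{e}_3 + T_{12}\overline{e}_4\}$. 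Meanwhile, $\omega_R^{(p)}$ is the trivial deformation with basis $\{\overline{e}_1^{(p)},\overline{e}_2^{(p)}\}$ because Frobenius on $R$ factors through $\kappa(q)$. A direct matrix computation of $V_R$ in these bases should show that $\det V_R \equiv T_{22} \pmod{\mathfrak{m}^2}$, giving the simple zero.

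For $H_2$: I would pick an ordinary point $q$ and a flag point $\tilde q \in D_2$ above it. By \Cref{ST-coordinates} there is a basis $\{e_1,e_2\}$ of $\omega$ on the formal neighbourhood with $V(e_i) = e_i^{(p)}$. In the affine chart of \Cref{notation-local-computation}, $\LL$ is locally generated by $e_1 + T e_2$, so $V(e_1 + T e_2) = e_1^{(p)} + T e_2^{(p)}$, while a local generator of $\LL^{(p)}$ is $e_1^{(p)} + T^p e_2^{(p)}$. Therefore the image in $(\omega/\LL)^{(p)}$ is $(T - T^p)\, e_2^{(p)}$, so the local defining function of $H_2$ is $T - T^p$, which has simple zeros at $T \in \F_p$.

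Finally, for the non-intersection statement: from the computation above the fibers of $D_2 \to \Shbar$ over the ordinary locus are zero-dimensional (cut out by $T - T^p$), so only over the zero-dimensional superspecial locus can the whole $\mathbb{P}^1$-fiber lie inside $D_2$. Hence every top-dimensional irreducible component of $D_2$ dominates $\Shbar$ and therefore contains a point over the ordinary locus, which lies outside $D_1$; no such component can be contained in $D_1$. A dual observation, using that over a $p$-rank one point the condition $V(\LL) \subset \LL^{(p)}$ is a non-trivial constraint on the flag parameter and so cuts a proper subvariety out of each fiber $\mathbb{P}^1_{q}$ inside $D_1$, rules out $D_2$ containing a top-dimensional component of $D_1$. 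The main obstacle is the $H_1$ case, which requires setting up the Grothendieck-Messing deformation in a basis compatible with the $p$-rank one normal form precisely enough to extract the leading term of $\det V_R$; once that is in place, everything else reduces to a transparent matrix identity.
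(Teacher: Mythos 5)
Your proposal matches the paper's proof almost line for line: Zariski--Nagata reduction to formal completions, the Grothendieck--Messing computation at a $p$-rank one point in a symplectic Dieudonn\'e basis putting $V_{\kappa(q)}$ in normal form (yielding $\det V_R = T_{22}$), the Serre--Tate coordinate computation $\tilde H_2 = T - T^p$ at an ordinary point, and the observation that every top-dimensional component of $D_2$ dominates $\Shbar$ and hence meets the ordinary locus, where $D_1$ does not. The only deviation is your closing ``dual observation'' ruling out $D_2 \supset$ (component of $D_1$), which is logically redundant: any top-dimensional component of $D_1 \cap D_2$ would automatically be an irreducible component of \emph{each} divisor (since both are pure of codimension $1$ in the smooth $\flag$), so the single direction you already established suffices, exactly as in the paper.
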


 \section{Construction of the theta operators on the open strata} \label{Section2}

Let $U=U_1 \cap U_2$ be the open in $\flag$ where $H_1 \cdot H_2$ does not vanish.
We first construct the basic theta operators over $U$, and then we extend them to $U_1$.

\subsection{The Igusa variety}
We describe an Igusa variety $\Ig/\Shbar^{\text{ord}}$, which is a finite \'etale cover.  
Over $\Shbar^{\text{ord}}$ there is a connected-\'etale sequence for $A[p]$ consisting of finite flat subgroups
$$
0 \to A[p]^0 \to A[p] \to A[p]^{\text{\'et}} \to 0.
$$
\begin{defn}
Let $\tau: \Ig \to  \Shbar^{\text{ord}}$ be the scheme representing the moduli problem sending $S \in \text{Sch}/\F_p$
to 
$$
\Ig(S)=\{(A,\lambda, \eta) \in \Shbar^{\text{ord}}(S) \text{ and } \phi: (\Z/p)^2 \cong A[p]^{\text{\'et}}\}.
$$
It is an \'etale $\GL_2(\F_p)$-torsor over $\Shbar^{\text{ord}}$, with $\GL_2(\F_p)$ acting on $\phi$
via $(\Z/p)^2$.
\end{defn}
Taking Cartier duals, one also gets the isomorphism $\mu^2_p \cong A[p]^0$ over $\Ig$,
and in fact $\Ig$ also parametrizes embeddings $\phi: \mu^2_p \hookrightarrow A[p]$. This implies that we can trivialize 
$\omega$ over $\Ig$:
$$
\tau^* \omega=\omega_{A[p]_\Ig}=\omega_{A[p]^{0}_\Ig}=\omega_{\mu^2_p} \cong \mathcal{O}^2_{\Ig}.
$$
Since we can explicitly describe $V$ on $\mu_p$ it follows that there is a basis $\{e_1,e_2\}$ of $\omega_\Ig$
satisfying $V e_i=e^{(p)}_i$. We will refer to it as a canonical basis of $\omega_{\Ig}$, keeping in mind 
that it is only canonical up to the action of $\GL_2(\F_p)$.

\begin{notation} \label{ig-coordinates}
In order to make concrete computations we give some local coordinates for $\Ig$. Let $W \subseteq \Shbar$
be an open set and $\{e_1,e_2\}$ a basis of $\omega_{W}$. Over $(W \cap \Shbar^{\text{ord}})_{\Ig}$
there is some matrix $M=(x_{ij}) \in \GL_2(\mathcal{O}_{\Ig})$ such that $M\begin{pmatrix}
  e_1 \\
  e_2
\end{pmatrix}=
\begin{pmatrix}
  \tilde{e}_1 \\
  \tilde{e}_2
\end{pmatrix}$ where $\{\tilde{e}_1,\tilde{e}_2\}$ is the canonical basis of $\omega_{\Ig}$. Since 
$V \tilde{e}_i=\tilde{e}^{(p)}_i$ we have the relation 
$M^{(p)}=M V^{\text{T}}$, where $M^{(p)}=(x^p_{ij})$ and $V$ is the matrix of $V: \omega \to \omega^{(p)}$ with respect to 
the basis $\{e_1,e_2\}$. 
\end{notation}

Recall that $U_1=\pi^{-1}(\Shbar^\text{ord})$, so that $\flag_{\Ig}\coloneqq U_1 \times_{\Shbar^{\text{ord}}}
\Ig=\mathbb{P}(\omega_\Ig) \cong \Ig \times
\mathbb{P}^1$ is a $\GL_2(\F_p)$-\'etale torsor over $U_1$, with $\GL_2(\F_p)$ acting diagonally on $\Ig \times
\mathbb{P}^1$. 

\begin{prop} \label{igusa-splitting}
The exact sequence 
$$
0 \to \pi^* \Omega^1_{\Shbar} \to \Omega^1_{\flag} \to \Omega^1_{\flag/\Shbar} \to 0 
$$
canonically splits over $U_1$. We denote by $s : \Omega^1_{U_1/\overline{\F}_p} \to \pi^* \Omega^1_{\Shbar}$ the associated section. 
\begin{proof}
By Galois descent to construct $s$ it is enough 
to construct a $\GL_2(\F_p)$-equivariant section $\Omega^1_{\flag_\Ig} \to \pi^* \Omega^{1}_\Ig$.
Both projections from $\flag_{\Ig} \cong \Ig \times \mathbb{P}^1$ are $\GL_2(\F_p)$-equivariant,
 so the splitting of the exact sequence for $\Omega^1_{\Ig \times \mathbb{P}^1}$
is also $\GL_2(\F_p)$-equivariant. 
\end{proof}
\end{prop}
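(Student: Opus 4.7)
The plan is to construct the splitting on the Igusa cover $\flag_\Ig = \Ig \times_{\Shbar^{\textnormal{ord}}} U_1$, where the canonical basis of $\omega_\Ig$ forces a product structure, and then descend along the \'etale $\GL_2(\F_p)$-torsor $\Ig \to \Shbar^{\textnormal{ord}}$. Because \'etale descent for quasi-coherent sheaves and their morphisms is effective, producing a $\GL_2(\F_p)$-equivariant retraction of the pulled-back short exact sequence on $\flag_\Ig$ is equivalent to producing the desired section $s$ on $U_1$. Uniqueness of the equivariant retraction on the cover then guarantees its descent.

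Over $\Ig$ the canonical basis $\{e_1,e_2\}$ of $\omega_\Ig$ furnishes an isomorphism $\omega_\Ig \cong \mathcal{O}_\Ig^2$, and hence a scheme isomorphism $\flag_\Ig = \mathbb{P}(\omega_\Ig) \cong \Ig \times_{\overline{\F}_p} \mathbb{P}^1$. Writing $p_1, p_2$ for the two projections, one has the canonical direct sum decomposition
\[
\Omega^1_{\Ig \times \mathbb{P}^1} \;=\; p_1^* \Omega^1_{\Ig} \,\oplus\, p_2^* \Omega^1_{\mathbb{P}^1},
\]
which splits the short exact sequence
\[
0 \to p_1^* \Omega^1_{\Ig} \to \Omega^1_{\flag_\Ig} \to \Omega^1_{\flag_\Ig/\Ig} \to 0.
\]
This is exactly the base change to $\flag_\Ig$ of the exact sequence from the statement, since $\tau:\Ig \to \Shbar^{\textnormal{ord}}$ is \'etale and therefore $\tau^*\Omega^1_{\Shbar^{\textnormal{ord}}} = \Omega^1_{\Ig}$. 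Projection onto the first summand provides the required retraction on the cover.

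For descent, I would verify that the identification $\flag_\Ig \cong \Ig \times \mathbb{P}^1$ intertwines the natural $\GL_2(\F_p)$-action on $\flag_\Ig$ (inherited from $\Ig \to \Shbar^{\textnormal{ord}}$) with the diagonal action on the product — on $\Ig$ through the change of basis of $\omega_\Ig$, and on $\mathbb{P}^1 = \mathbb{P}(\langle e_1, e_2\rangle)$ through the corresponding linear action. This is a direct consequence of the functoriality of $\mathbb{P}(-)$ applied to the equivariant isomorphism $\omega_\Ig \cong \mathcal{O}_\Ig^2$. Both $p_1$ and $p_2$ are then $\GL_2(\F_p)$-equivariant, so the direct sum decomposition above is preserved by $\GL_2(\F_p)$, and the retraction descends uniquely to the desired section $s$ on $U_1$.

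The only real obstacle is the equivariance check in the last paragraph; everything else is formal, since the cotangent sheaf of a product canonically splits and base change along an \'etale map preserves cotangent sheaves. In practice the equivariance amounts to observing that the formation of the projective bundle is functorial, so the intrinsic $\GL_2(\F_p)$-action on $\flag_\Ig$ and the diagonal action on $\Ig \times \mathbb{P}^1$ agree tautologically under the chosen trivialization.
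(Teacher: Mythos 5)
Your proposal is correct and follows essentially the same route as the paper: pass to the Igusa cover $\flag_\Ig \cong \Ig \times \mathbb{P}^1$, use the canonical product splitting of $\Omega^1$ for a product of schemes, observe that both projections (equivalently, the product decomposition) are $\GL_2(\F_p)$-equivariant for the diagonal action, and descend. You simply spell out the details the paper leaves implicit — in particular that the diagonal action on $\Ig \times \mathbb{P}^1$ matches the intrinsic action on $\flag_\Ig$, a fact the paper already records in the sentence immediately preceding the proposition.
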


There is also a more group-theoretic way to understand this section. Over $U_1$ there is an 
$M \cap B$-reduction $I_{M \cap B} \subseteq I_{B}$, defined as the elements of $I_{B}$
mapping the direct sum $H=\omega \oplus \Ker(V)$ to $(L \oplus L^{\vee}) \otimes \mathcal{O}_{U_1}$.
This defines a functor $F_{M \cap B}: \Rep(M \cap B) \to \text{Coh}(U_1)$.
\begin{prop} \label{splitting-2}
Let $t: \mathfrak{g}/\mathfrak{p} \to \mathfrak{g}/\mathfrak{b}$ be given by the inclusion of 
the $x_{-\gamma}$ for $\gamma \in \{\beta,\alpha+\beta,2\alpha+\beta\}$.
It is a map of $M \cap B$-representations, and $F_{M \cap B}(t^{\vee})$
over $U_1$ is identified with the splitting in 
\Cref{igusa-splitting} under the isomorphism from \Cref{flag-differentials}.
\begin{proof}
This can be checked over $U_{1,\Ig}$, where there is a canonical trivialization of $H=\omega \oplus \Ker(V)$. 
Then $I_{M \cap B}$ is identified with the canonical $M \cap B$ torsor on $\mathbb{P}^1=M/(M \cap B)$, and 
we can easily identify $F_{M \cap B}(t^{\vee})$ with the splitting for $\Omega^1_{\mathbb{P}^1 \times \Ig}$.
\end{proof}
\end{prop}

\subsection{Construction on the open strata $U$}

On $U \subset \flag_{\overline{\F}_p}$ we have both the splittings
 $H=\mathcal{L} \oplus V^{-1}(\mathcal{L}^{(p)})$ and $H=\omega \oplus \omega^{\vee}$, so that 
 setting $\mathcal{L}'\coloneqq \omega \cap V^{-1}(\LL^{(p)})$ we get a full splitting of the symplectic flag 
 on $U$ into line bundles: 
 $$
 H=\LL \oplus \LL' \oplus (\LL^{'\perp} \cap \omega^\vee) \oplus (\LL^{\perp} \cap \omega^\vee).
 $$
Denote by $\pi_{\LL}: H \to \LL$ and $\pi_{\LL'}: H \to \LL'$ the corresponding projections,
we will use the same notation for the projections restricted to $\omega$. 
Moreover, we have $\LL(k,l)_{U}=\LL'^{k} \otimes \LL^l$.

\begin{defn}(Theta operators on $U$) \label{operators-on-U}
Let $\lambda=(k,l) \in X^*(T)$. We define the following maps over $U$.
\begin{itemize}
\item The splitting $\omega=\LL \oplus \LL'$ induces a splitting of 
$\Sym^2 \omega$. The corresponding projections are
\begin{align*}
\pi_1 :&  \pi^* \Omega^1_{\Shbar} \cong \Sym^2 \omega \to \LL'^2=\LL(2,0)  \\
\pi_2 :& \pi^* \Omega^1_{\Shbar} \cong \Sym^2 \omega  \to \LL' \otimes \LL=\LL(1,1) \\
\pi_3 :& \pi^* \Omega^1_{\Shbar} \cong \Sym^2 \omega  \to  \LL^2=\LL(0,2).
\end{align*} 
\item For $i=1,2,3$ we define operators $\tilde{\theta}_i: \LL(\lambda) \to \LL(\lambda+\lambda_i)$
with $\lambda_1=(2,0)$, $\lambda_2=(1,1)$, $\lambda_3=(0,2)$. Let $L$ be $\LL$ or $\LL'$. Then for 
$\LL(\lambda)=L$ the operators $\tilde{\theta}_i$ are defined as
$$
\tilde{\theta}_i: L \hookrightarrow H \xrightarrow{\nabla}  H \otimes \Omega^1_{\flag/\overline{\F}_p}
\xrightarrow{\pi_{L} \otimes s} L \otimes \pi^* \Omega^1_{\Shbar} \xrightarrow{\text{id} \otimes \pi_i}
L \otimes \LL(\lambda_i).
$$
For a general weight $\lambda$, it is defined by prescribing that $\tilde{\theta}_i(fg)=f \tilde{\theta}_i(g)
+\tilde{\theta}_i(f)g$ for local sections $f,g$, and that on $\mathcal{O}_{\flag}$ it is given by composing 
the differential with $\pi_i \circ s$. Explicitly: on a local section $fe^{-1} \in L^{-1}$ with $f \in \mathcal{O}$
and $e \in L$ a local basis
$$
\tilde{\theta}_i(fe^{-1}) \coloneqq \pi_i\circ s(df) \otimes e^{-1}-f\tilde{\theta}_i(e) \otimes e^{-2}.
$$
Every $\LL(k,l)$ is a tensor product of 
$\LL, \LL'$ or their duals, so that $\tilde{\theta}_i$ can be defined by the Leibniz rule.
\item Let $r: \Omega^1_{\flag} \to \Omega^1_{\flag/\Shbar}$, and $L$ as before.
Define $\tilde{\theta}_4$ on $L$ as
$$
\tilde{\theta}_4: L \hookrightarrow H \xrightarrow{\nabla}  H \otimes \Omega^1_{\flag}
\xrightarrow{\pi_{L} \otimes r} L \otimes \Omega^1_{\flag/\Shbar}=L \otimes \LL(-1,1),
$$
using \Cref{basic-lemma-flag}(2). We extend it to arbitrary weights as above. 
\end{itemize}

\end{defn}

\begin{remark}
In the definition of $\tilde{\theta}_i$ it is natural to use the projection $\pi_{L}$
as opposed to the projection to any other line bundle, in which case 
one would get a linear map (in some cases identically zero).
See also \Cref{theta-on-X} ahead for the relation of the extension 
of $\tilde{\theta}_i$ to $\flag$ to the way \cite{Eischen-Mantovan-1} define theta operators 
on the Shimura variety. 
\end{remark}

We also remark that $\pi_{1,2,3}$ can be described via the $T$-reduction $I_{T} \subset I_{B}$ over $U$ 
given by the splitting
of the Hodge filtration, as in \Cref{splitting-2}.
Since $\theta_{1,2,3,4}$ are maps of line bundles on $U$ it is 
a formal consequence that they can be extended to $\flag$ by multiplying by some sufficiently high 
power of the Hasse invariants \footnote{One can see this by considering degree $1$ differential operators as linear maps
$P^1_{\Shbar} \otimes V \to W$.}. Moreover, it suffices to 
extend 
the theta operators for weights $(1,0)$, $(0,1)$ and $(0,0)$, since all other weights are a combination of tensor products
and duals of these.

\subsection{Extension to $U_1$}
Let $L=\LL(1,0)$ or $\LL(0,1)$.
There is a finite \'etale cover  $\tau: U_{\Ig} \to U$ by pulling back via the map $\Ig \to \Shbar^\text{ord}$,
and one can make sense
of $\tau^* \tilde{\theta}_i$, defined by the composition of the same maps pulled-back to $U_{\Ig}$,
since the pullback of the Gauss-Manin connection under an \'etale map
is again the Gauss-Manin connection. Thus, we can prove properties about $\tilde{\theta}_i$ on 
$U_{\Ig}$, where there is a basis $\{e_1,e_2\}$ of $\omega_{\Ig}$ satisfying $V e_i=e^{(p)}_i$.

\begin{lemma} \label{magic-on-U1}
Consider the map over $U_1$
$$
\tilde{\theta}: \omega \hookrightarrow H \xrightarrow{\nabla} H \otimes \Omega^1_{\flag} \xrightarrow{\pi_\omega \otimes s}
\omega \otimes \pi^* \Omega^1_{\Shbar},
$$
where $\pi_\omega: H \to \omega$ is the unit root splitting over $U_1$. 
Then $\tilde{\theta}$ satisfies $\tilde{\theta}(\LL) \subset \LL \otimes \pi^* \Omega^1_{\Shbar}$, so that it induces maps 
$$
\theta_L : L \to L \otimes \pi^* \Omega^1_{\Shbar}.
$$
\begin{proof}
It is enough to prove it after pulling back to $\Ig$. The analogous map $\omega \to \omega \otimes \pi^*\Omega^1_{\Ig}$
has $\{e_1,e_2\}$ as a horizontal basis, since the kernel of $\pi_\omega$ is $\Ker(V: H \to \omega^{(p)})$
and
\[
V \nabla(e_i)=\nabla^{(p)}(Ve_i)=\nabla^{(p)}(e^{(p)}_i)=0
\]
by functoriality of $\nabla$, and the fact that the differential of Frobenius vanishes. Therefore, in this basis
$\tilde{\theta}$ is given by the composition of the trivial connection on $\mathcal{O}^2_{\Ig \times \p^1}$
and the section $s$, so it ignores any differentiation along $\p^1$.  It follows that we can choose a local basis 
for $L$ that is horizontal for $\tilde{\theta}$.
\end{proof}
\end{lemma}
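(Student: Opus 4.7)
The plan is to prove the statement by descending to the Igusa cover, where we can use the canonical basis of $\omega$ satisfying $Ve_i = e_i^{(p)}$ to exhibit horizontal sections with respect to $\tilde{\theta}$, and then to verify by a Leibniz-rule computation that $\tilde{\theta}$ sends $\LL$ into $\LL \otimes \pi^*\Omega^1_{\Shbar}$.

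First, I would observe that the statement is étale-local on $U_1$, and that both $\nabla$ and $s$ pull back correctly under an \'etale map (the pullback of the Gauss-Manin connection is again Gauss-Manin, and $s$ was constructed via Galois descent). Thus it suffices to work over the Igusa pullback $U_{\Ig} = U_1 \times_{\Shbar^{\ord}} \Ig$, where $\omega$ admits a basis $\{e_1, e_2\}$ with $Ve_i = e_i^{(p)}$.

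Next I would show that $\tilde{\theta}(e_i) = 0$. The key identity is $V \nabla(e_i) = \nabla^{(p)}(Ve_i) = \nabla^{(p)}(e_i^{(p)}) = 0$: the first equality is the compatibility of $V$ with the Gauss-Manin connection (it is a morphism of crystals), and the second vanishing follows from \Cref{GM-base-change} applied to absolute Frobenius, together with the fact that the differential of Frobenius is zero in characteristic $p$. Consequently $\nabla(e_i) \in \ker(V) \otimes \Omega^1_{\flag}$. Since $\pi_\omega \colon H \to \omega$ is the unit root splitting, its kernel is precisely the unit root subspace on which $V = 0$, i.e.\ $\ker(\pi_\omega) = \ker(V)$. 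Therefore $(\pi_\omega \otimes s)(\nabla(e_i)) = 0$, and $\{e_1,e_2\}$ is a horizontal basis for $\tilde{\theta}$.

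Finally, in the local chart where $\LL$ is generated by $e_1 + T e_2$ (with $T$ the vertical coordinate on $\flag_{\Ig} \cong \Ig \times \p^1$), any local section of $\LL$ has the form $f(e_1 + Te_2)$. Expanding $\tilde{\theta}(f(e_1+Te_2))$ using the horizontality of the $e_i$ and the Leibniz rule gives
\[
\tilde{\theta}\bigl(f(e_1+Te_2)\bigr) \;=\; (e_1+Te_2) \otimes s(df) + f\, e_2 \otimes s(dT).
\]
The second term vanishes because $dT$ lies in the vertical summand $\Omega^1_{\flag/\Shbar}$ of $\Omega^1_{\flag}$ under the canonical splitting of \Cref{igusa-splitting}, hence is killed by $s$. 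The first term manifestly lies in $\LL \otimes \pi^*\Omega^1_{\Shbar}$, proving the inclusion $\tilde{\theta}(\LL) \subset \LL \otimes \pi^*\Omega^1_{\Shbar}$. The main conceptual point, and the step where one must be careful, is the identification $\ker(\pi_\omega)=\ker(V)$ together with the compatibility $V \circ \nabla = \nabla^{(p)} \circ V$; everything else is a bookkeeping exercise with the Leibniz rule.
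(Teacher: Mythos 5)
Your proof is correct and takes essentially the same approach as the paper: descend to the Igusa cover, use $V\nabla(e_i)=\nabla^{(p)}(e_i^{(p)})=0$ to show the $e_i$ are $\tilde\theta$-horizontal, then observe that the remaining term $e_2\otimes s(dT)$ vanishes because $dT$ is vertical. The only difference is that you spell out the final Leibniz-rule computation on $f(e_1+Te_2)$ explicitly, where the paper just says $\tilde\theta$ "ignores any differentiation along $\p^1$" and asserts the existence of a horizontal local basis for $L$; both amount to the same observation.
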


To extend $\tilde{\theta}_{1,2,3}$ to $U_1$, 
we use the maps 
$\theta_{L}: L \to L \otimes \pi^* \Omega^1_{\Shbar}$ from \Cref{magic-on-U1}. They extend the analogous maps 
in the
definition of $\tilde{\theta}_{1,2,3}$. Then it suffices to extend $\pi_{1,2,3}$ to $U_1$. 
The map $\pi_{(1,0)}: \omega \to \omega/\LL$
extends $\pi_{\LL'}: \omega \to \LL'$ to $U_1$, and the map
\[
\pi_{(0,1)}: \omega \xrightarrow{V} \omega^{(p)} \to (\omega/\LL)^{(p)}=\LL(p,0)
\]
extends $\pi_{\LL} : \omega \to \LL$, in the sense that $\pi_{(0,1) \mid U}=H_2 \cdot \pi_{\LL}$.
Consider the map $\Sym^2 \omega \to \omega^{\otimes 2}$ given by 
$x y \mapsto x \otimes y + y \otimes x$. Then
\begin{align} \label{projections-from-KS}
  \Pi_1 :&  \Sym^2 \omega \xrightarrow{\Sym^2 \pi_{(1,0)}} \LL(2,0)  \\
  \Pi_2 :& \Sym^2 \omega \to \omega^{\otimes 2} \xrightarrow{\pi_{(1,0)} \otimes \pi_{(0,1)}} \LL(p+1,0) \\
  \Pi_3 :& \Sym^2 \omega  \xrightarrow{\Sym^2 \pi_{(0,1)} } \LL(2p,0)
\end{align}
extend the $\pi_i$ to $U_1$, and we have $\Pi_i=H^{i-1}_2 \pi_i$ on $U$. 
\begin{defn}(Extending $\tilde{\theta}_{1,2,3}$ to $U_1$)
Let $\lambda \in X^*(T)$, define the maps of sheaves on $U_1$ 
\begin{align*}
\Theta_1: &\LL(\lambda) \to \LL(\lambda+(2,0))\\
 \Theta_2: &\LL(\lambda) \to \LL(\lambda+(p+1,0))\\
 \Theta_3: &\LL(\lambda) \to \LL(\lambda+(2p,0))
\end{align*}
as follows.
For $\lambda=(1,0)$ or $(0,1)$, $L=\LL(\lambda)$  they are defined as
\begin{align*}
 \Theta_1: & L \xrightarrow{\theta_L} L \otimes \pi^* \Omega^1_{\Shbar} \xrightarrow{\text{id} \otimes \Pi_1}
 L \otimes \LL(2,0) \\
 \Theta_2: & L \xrightarrow{\theta_L} L \otimes \pi^* \Omega^1_{\Shbar} \xrightarrow{\text{id} \otimes \Pi_2}
 L \otimes \LL(p+1,0) \\
 \Theta_3: & L \xrightarrow{\theta_L} L \otimes \pi^* \Omega^1_{\Shbar} \xrightarrow{\text{id} \otimes \Pi_3}
 L \otimes \LL(2p,0), \\
\end{align*}
and for $\lambda=(0,0)$ in the same way using $\mathcal{O}_{U_1} \xrightarrow{s \circ d} \pi^*\Omega^1_{\Shbar}$
instead of $\theta_{L}$.
They are extended to arbitrary $\lambda$ as in \Cref{operators-on-U}. When restricted to $U$ they satisfy 
$\Theta_{i,U}=H^{i-1}_2 \tilde{\theta}_i$. 
\end{defn}

Now we extend $\tilde{\theta}_4$ to $\flag$ (the map has no poles along $D_1$). Importantly,
on $\flag$ the map 
$
\omega \hookrightarrow H \xrightarrow{\nabla_{\flag/\Shbar}} H \otimes \Omega^1_{\flag/\Shbar}
$
factors through $\omega \otimes \Omega^1_{\flag/\Shbar}$, since by functoriality of 
$\nabla$, every section that is a pullback from $\Shbar$
is horizontal for $\nabla_{\flag/\Shbar}$. 
This allows us to extend $\tilde{\theta}_4$ on $\LL(0,1)$ as
\[
\theta_4 : \LL(0,1) \xrightarrow{\nabla} \omega \otimes \Omega^1_{\flag/\Shbar} 
\xrightarrow{\pi_{(0,1)} \otimes \text{id}}
\LL(p-1,1).
\]
For $\LL(1,0)$ we prove that
the map over $U$
\begin{equation} \label{theta_4_(1,0)}
\theta_4: \LL(1,0)=\omega/\LL \cong \LL' \xrightarrow{\tilde{\theta}_4} \LL' \otimes \LL(-1,1) \xrightarrow{H_2} 
\LL(1,0) \otimes \LL(p-1,0)
\end{equation}
extends to $\flag$, by computing a local expression. We use the notation of 
\Cref{notation-local-computation}. When restricting the canonical isomorphism $\Omega^1_{\flag/\Shbar}=\LL(-1,1)$
to $W$ and using the trivialization given by $\{e_1,e_2\}$, we have
$$
\Omega^1_{\flag/\Shbar}|W \cong \Omega^1_{\p^1_W/W}=\mathcal{O}(-2)_W \cong \LL(-1,1)_W,
$$ 
sending $\text{dT}$ to $ e^{-1}_2(e_1 +Te_2)$.
On $U$ a basis for  
$\mathcal{L}'$ is $e_2-\lambda(e_1+Te_2)$, where
$\lambda\coloneqq \frac{1}{\tilde{H}_2}\frac{d\tilde{H}_2}{dT}$. Then 
$f e_2 \in \omega/\LL$ for $f \in \mathcal{O}_{\A^1_W}$ is sent by $\theta_4$ to  
\begin{align*}
f e_2  &\mapsto f(e_2-\lambda(e_1+Te_2)) \mapsto H_2(e_2-\lambda(e_1+Te_2)) \otimes df \\
&-H_2f(e_1 \otimes d(\lambda)+e_2 \otimes d(\lambda T)) 
\mapsto(\tilde{H}_2 \frac{df}{dT}-f\frac{d\tilde{H}_2}{dT}) e^{p-1}_2(e_1+Te_2).
\end{align*}
We have used that $\nabla_{\flag/\Shbar}(e_i)=0$.
Similarly, $f (e_1+Te_2) \in \LL(0,1)$ is sent to
$$
f (e_1+Te_2) \mapsto (\tilde{H}_2 e^p_2 \otimes df+f \frac{d\tilde{H}_2}{dT}e^p_2 \otimes dT)=
(\tilde{H}_2 \frac{df}{dT}+f\frac{d\tilde{H}_2}{dT}) e^{p-1}_2(e_1+Te_2).
$$

\begin{defn}(Extending $\tilde{\theta}_4$ to $\flag$) Define maps on $\flag$
  \begin{align*}
  \theta_4 : & \LL(0,1) \to H \xrightarrow{\nabla_{\flag/\Shbar}} H \otimes \Omega^1_{\flag/\Shbar} \xrightarrow{\pi_{(0,1)} \otimes \text{id}}
  \LL(p-1,1) \\
  \theta_4: & \LL(1,0)=\omega/\LL \cong \LL' \xrightarrow{\tilde{\theta}_4} \LL' \otimes \LL(-1,1) \xrightarrow{H_2} 
\LL(1,0) \otimes \LL(p-1,0).
  \end{align*}
  The latter is defined on $U$ but it extends to $\flag$ as explained above. 
  For $\lambda=0$ it is defined as $H_2d_{\flag/\Shbar}: \mathcal{O}_{\flag} \to \Omega^1_{\flag/\Shbar} \otimes \LL(p,-1)$,
  and for arbitrary $\lambda \in X^*(T)$ they are defined by the Leibniz rule. This gives maps
  $\theta_4: \LL(\lambda) \to \LL(\lambda+(p-1,0,p-1))$ on $\flag$ satisfying 
  $\theta_{4\mid U}=H_2 \tilde{\theta}_4$, and they are Hecke equivariant away from $p$.
  \end{defn}

  Putting together the local expressions from before we obtain the following. 

  \begin{lemma} \label{theta_4-local-expression} 
    Let $f e^k_2(e_1+Te_2)^l$ be a local section of $\LL(k,l)$ with $f \in \mathcal{O}_{\A^1_{W}}$. Then 
    $$
    \theta_4(f e^k_2 (e_1+Te_2)^l)=(\tilde{H}_2 \frac{df}{dT}+(l-k)f\frac{d\tilde{H}_2}{dT} ) e^{k+p-1}_2 (e_1+Te_2)^l.
    $$
    \end{lemma}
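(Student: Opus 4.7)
The plan is to obtain the formula from the Leibniz rule applied to the two base cases already computed in the display just before the statement, namely the local expressions for $\theta_4$ on $\LL(1,0)$ and $\LL(0,1)$. Indeed, since $\theta_4$ is constructed on arbitrary weights $\LL(k,l)$ by extending from $\LL(1,0)$ and $\LL(0,1)$ via the Leibniz rule, this is really all there is to do; the work is the careful bookkeeping.

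First I would compute $\theta_4$ on the constant local bases $e_2$ and $e_1+Te_2$ by specializing $f=1$ in the two displayed formulas: this gives $\theta_4(e_2) = -\frac{d\tilde{H}_2}{dT}\,e_2^{p}$ in $\LL(p,0)$ (this is where I believe the computation preceding the lemma has a harmless typo in the basis symbol, which can be checked by tracking weights through \eqref{theta_4_(1,0)}) and $\theta_4(e_1+Te_2) = \frac{d\tilde{H}_2}{dT}\,e_2^{p-1}(e_1+Te_2)$ in $\LL(p-1,1)$. Applying the Leibniz rule to $e_2^{k}(e_1+Te_2)^{l}$, viewed as a section of $\LL(1,0)^{\otimes k}\otimes\LL(0,1)^{\otimes l}$, yields
\[
\theta_4\bigl(e_2^{k}(e_1+Te_2)^{l}\bigr)
= k\,e_2^{k-1}\theta_4(e_2)(e_1+Te_2)^{l} + l\,e_2^{k}(e_1+Te_2)^{l-1}\theta_4(e_1+Te_2)
= (l-k)\frac{d\tilde{H}_2}{dT}\,e_2^{k+p-1}(e_1+Te_2)^{l}.
\]
For $k$ or $l$ negative, one does the same computation on the dual line bundle via $\theta_4(e^{-1}) = -e^{-2}\theta_4(e)$, which gives the same sign pattern, so $(l-k)$ remains the correct coefficient for all $(k,l)\in\Z^2$.

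Next I would handle the function factor $f \in \mathcal{O}_{\A^1_W}$ using the Leibniz rule in the form $\theta_4(fs) = \theta_4(f)\cdot s + f\cdot \theta_4(s)$. By definition $\theta_4 = H_2\cdot \tilde{\theta}_4$ on $\mathcal{O}_\flag$ as well, and $\tilde{\theta}_4(f) = r(df) = \frac{df}{dT}\,dT$, which under $\Omega^1_{\flag/\Shbar}\cong\LL(-1,1)$ sending $dT\mapsto e_2^{-1}(e_1+Te_2)$ becomes $\frac{df}{dT}\,e_2^{-1}(e_1+Te_2)$. Multiplying by $H_2 = \tilde{H}_2\,e_2^{p}(e_1+Te_2)^{-1}$ gives $\theta_4(f) = \tilde{H}_2\frac{df}{dT}\,e_2^{p-1}$ in $\LL(p-1,0)$. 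Combining the two contributions,
\[
\theta_4\bigl(f e_2^{k}(e_1+Te_2)^{l}\bigr)
= \tilde{H}_2\frac{df}{dT}\,e_2^{k+p-1}(e_1+Te_2)^{l} + f(l-k)\frac{d\tilde{H}_2}{dT}\,e_2^{k+p-1}(e_1+Te_2)^{l},
\]
which is the claimed identity. The canonical-extension statement is automatic because the whole construction of $\theta_4$ was done using $\nabla^{\tor}$, $V$ on $\omega^{\tor}$ and the extended $H_2\in H^0(\flag^{\tor},\LL^{\can}(p,-1))$; the local expression of $\tilde H_2$ extends across $D_\flag$ and the formula is identical.

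I do not anticipate a serious obstacle here: once the two base cases are correctly normalized (the one mild trap is getting the sign $-f\frac{d\tilde H_2}{dT}$ right for $\theta_4$ on $\LL(1,0)$, which is what produces the factor $(l-k)$ rather than $(l+k)$), everything else is Leibniz. The only thing one must check is that the Leibniz extension from $\LL(1,0)$ and $\LL(0,1)$ to all $\LL(k,l)$ is intrinsic, i.e.\ independent of the chosen way of writing $\LL(k,l)$ as a tensor product of these two line bundles and their duals; this follows from the definition of $\theta_4$ on arbitrary weights given in the paragraph introducing $\theta_4$.
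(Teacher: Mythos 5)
Your proof is correct and takes the same approach as the paper: the paper's own proof is the single sentence that the lemma follows by ``putting together the local expressions from before'' via the Leibniz rule, which you carry out in detail, including the extension to negative $k,l$ via duality. You are also right that the displayed basis symbol in the paper's computation of $\theta_4(fe_2)$ should read $e_2^p$ (the target is $\LL(p,0)$) rather than $e_2^{p-1}(e_1+Te_2)$; this is a typo, and your corrected base case is the one consistent with the lemma's formula at $(k,l)=(1,0)$.
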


We now prove some properties about the theta operators $\Theta_i$ that can be checked on an open dense subset, 
so that they will automatically extend to the operators on $\flag$.
We use the convention $\Theta_4=\theta_4$.

\begin{prop}
  Let $i=1,2,3,4$, $\lambda, \mu \in X^{*}(T)$. 
  \begin{enumerate}
  \item $\Theta_i(fg)=f \Theta_i(g)+ \Theta_i(f)g$ for $f, g$ local sections of 
   $\LL(\lambda)$ and $\LL(\mu)$. 
  \item  $\Theta_i(H_j)=0$ for $j=1,2$.
  \end{enumerate}
  \end{prop}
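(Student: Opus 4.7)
The plan is to treat the two parts separately: (1) is essentially formal from the construction of the operators, while (2) reduces to a local computation on the Igusa cover.

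For (1), the Leibniz rule is built into the construction. By \Cref{operators-on-U} and the subsequent definitions, each $\Theta_i$ (and $\theta_4$) is first defined on the base line bundles $\LL(1,0)$ and $\LL(0,1)$, with its action on $\mathcal{O}_{\flag}$ given by $\pi_i \circ s \circ d$ (resp.\ $r \circ d$ for $\theta_4$); it is then extended to arbitrary $\LL(\lambda)$ by declaring the Leibniz rule on tensor products. Since every $\LL(\lambda)$ is a tensor product of $\LL(1,0), \LL(0,1)$ and their duals, and since $f \otimes g$ for $f \in \LL(\lambda), g \in \LL(\mu)$ represents a section of $\LL(\lambda+\mu) = \LL(\lambda) \otimes \LL(\mu)$, the assertion $\Theta_i(fg) = f\Theta_i(g) + \Theta_i(f)g$ holds by construction. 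The only thing to verify is the well-definedness of the Leibniz extension, a standard fact about first-order differential operators between tensor products of line bundles.

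For (2), the target is a line bundle on the reduced scheme $U_1$ (resp.\ $\flag$ for $\theta_4$), so it suffices to check vanishing on the dense open $U = U_1 \cap U_2$. On $U$ the identities $\Theta_i = H_2^{i-1}\tilde{\theta}_i$ (for $i=1,2,3$) and $\theta_4 = H_2\tilde{\theta}_4$ hold, and since $H_2$ is invertible on $U$, the problem reduces to showing $\tilde{\theta}_i(H_j) = 0$ on $U$ for all $i, j$. Pulling back to the Igusa cover $U_{\Ig} \to U$ and working with the canonical basis $\{e_1, e_2\}$ of $\omega$ satisfying $V e_i = e_i^{(p)}$, in the notation of \Cref{notation-local-computation} we have $a=d=1,\ b=c=0$, hence $\tilde{H}_1 = 1$ and $\tilde{H}_2 = T - T^p$.

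The core observation for $\tilde{\theta}_1, \tilde{\theta}_2, \tilde{\theta}_3$ is the one already used in \Cref{magic-on-U1}: by functoriality of $\nabla$ and the vanishing of the differential of Frobenius in characteristic $p$, $V\nabla(e_j) = \nabla^{(p)}(V e_j) = \nabla^{(p)}(e_j^{(p)}) = 0$, so $\nabla(e_j) \in \Ker(V) \otimes \Omega^1_{\flag}$; combined with $\Ker V \cap \omega = 0$ on the ordinary locus, the projections $\pi_{\LL}$ and $\pi_{\LL'}$ (factoring through the unit-root splitting $\pi_\omega$) annihilate $\nabla(e_j)$. Together with $s(dT) = 0$ (as $dT$ is fiber-vertical) and a short Leibniz computation, this shows that the local bases $e_1 + Te_2 \in \LL$ and $e_1 + T^p e_2 \in \LL'$ are annihilated by $\tilde{\theta}_i$. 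Expressing $H_1 = (e_1 \wedge e_2)^{p-1}$ and $H_2$ in these bases via $e_1 \wedge e_2 = (T^p - T)^{-1}(e_1 + Te_2) \otimes (e_1 + T^p e_2)$ and applying Leibniz then yields $\tilde{\theta}_i(H_j) = 0$. For $\theta_4$, the explicit local formula of \Cref{theta_4-local-expression} is the quickest route: applied to $H_1 = (ad - bc)\, e_2^{p-1}(e_1 + Te_2)^{p-1}$ both terms vanish because $ad - bc$ is independent of $T$ and $l - k = 0$; applied to $H_2 = \tilde{H}_2\, e_2^p(e_1 + Te_2)^{-1}$ (where $l - k = -(p+1)$) one gets $\tilde{H}_2 \tfrac{d\tilde{H}_2}{dT}(1 - (p+1)) = -p\, \tilde{H}_2 \tfrac{d\tilde{H}_2}{dT} = 0$ in characteristic $p$. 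The main obstacle is the book-keeping in the Leibniz expansions; the choice of canonical basis on $\Ig$, which makes $\tilde{H}_1 = 1$ and $\tilde{H}_2$ a simple polynomial in $T$, is the key simplification that keeps the computation tractable.
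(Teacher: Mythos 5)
Your argument is correct and takes essentially the same route as the paper: part (1) is built into the Leibniz construction of the operators, and part (2) is a local check over the Igusa cover using horizontality of the canonical basis ($V\nabla(e_i)=\nabla^{(p)}(e^{(p)}_i)=0$), the vanishing $s(dT)=0$, and the explicit local formula for $\theta_4$. The only cosmetic difference is that the paper works directly over $U_{1,\Ig}$ with the $\omega/\LL$-basis $e_2$, so $H_1=e_2^{p-1}(e_1+Te_2)^{p-1}$ has constant function part and is killed trivially, whereas your choice of the $\LL'$-basis $e_1+T^pe_2$ over $U_{\Ig}$ pulls in the extra function $(T^p-T)^{1-p}$ and makes both Hasse invariants lean on $s(dT)=0$ -- a bit more book-keeping, same underlying idea.
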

  \begin{proof}
  The first one is automatic from the construction of $\Theta_i$.
  We check the second over $U_{1,\Ig}$, let $\{e_1,e_2\}$ be a canonical basis of $\omega_\Ig$.
  In this basis
  \begin{align*}
  H_1&=e^{p-1}_2 (e_1+Te_2)^{p-1} \\
  H_2&=(T-T^p) e^p_2  (e_1+Te_2)^{-1}.
  \end{align*}
  Using that $V \nabla(e_i)=\nabla^{(p)}(e^{(p)}_i)=0$ we see that both $e_2 \in \LL(1,0)$ and 
  $e_1+Te_2 \in \LL(0,1)$ are horizontal for the map $\theta_L$ in the definition of $\Theta_{1,2,3}$. 
  It is then clear that $\Theta_{1,2,3}(H_i)=0$ for $i=1,2$, and $\Theta_4(H_1)=0$. Finally, 
  $\Theta_4(H_2)=0$ from a direct computation with \Cref{theta_4-local-expression}.
  
  \end{proof}

  Let $s \in \Shbar^{\text{ord}}(\overline{\F}_p)$, and let  $\{e_1,e_2\}$ be a Serre--Tate
  basis of $\omega$ over $\widehat{\mathcal{O}}_{\Shbar,s}$, so
  that $V e_i=e^{(p)}_i$. Following the recipe for their construction gives the following expressions 
  for $\Theta_i$ on the fiber of $\flag$ over $\widehat{\mathcal{O}}_{\Shbar,s}$
  \begin{align*}
  \Theta_1&(f e^{k-l}_2 (e_1 \wedge e_2)^l) =\\
  &(T^2D_{11}(f)-TD_{12}(f)+D_{22}(f))e^{k-l+2}_2 (e_1 \wedge e_2)^{l} \\
  \Theta_2 &(f e^{k-l}_2 (e_1 \wedge e_2)^l)= \\
  &(2T^{p+1}D_{11}(f)-(T+T^p)D_{12}(f)+2D_{22}(f))e^{k-l+p+1}_2 (e_1 \wedge e_2)^{l} \\
  \Theta_3&(f e^{k-l}_2 (e_1 \wedge e_2)^l)=\\
  &(T^{2p}D_{11}(f)-T^p D_{12}(f)+D_{22}(f))e^{k-l+2p}_2 (e_1 \wedge e_2)^{l},
  \end{align*}
  where $D_{ij}\coloneqq \text{ks}(e_i e_j)^{\vee}=(T_{ij}+1)\frac{\partial}{\partial T_{ij}}$
  by \Cref{ST-coordinates}. 
  We have used that the canonical basis on $\omega_{\Ig}$ specializes to a Serre--Tate basis on  
  $\widehat{\mathcal{O}}_{\Shbar,s}$.
  Using these, we can prove that the following relations hold among the $\Theta_i$.
\begin{prop} \label{relations-U1}
The following relations hold as operators over $U_1$.
\begin{enumerate}
\item $[\Theta_1,\theta_4]\coloneqq \Theta_1 \circ \theta_4- \theta_4 \circ \Theta_1=\Theta_2$.
\item $[\Theta_2,\theta_4]=2\Theta_3$.
\item $[\Theta_3,\theta_4]=0$.
\item $[\Theta_{i}, \Theta_j]=0$ for $i,j=1,2,3$. 
\item $\Theta^p_1=\Theta_3$.
\item The operator $\Theta\coloneqq \frac{1}{H^2_2}(4\Theta_1 \Theta_3- \Theta^2_2)$ is well-defined and satisfies
 $\Theta^p=H^{2}_1\Theta$.
\end{enumerate}
\end{prop}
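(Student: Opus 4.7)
The plan is to verify each of the seven identities locally on the formal neighbourhood of an arbitrary ordinary geometric point, using the Serre-Tate coordinates and the explicit local formulas for $\Theta_1, \Theta_2, \Theta_3$ and $\theta_4$ tabulated just before the proposition. Each identity is an equality of maps between line bundles on $U_1 = \pi^{-1}(\Shbar^{\textnormal{ord}})$, and may therefore be checked at the stalk of an arbitrary geometric point, hence on the formal completion. In Serre-Tate coordinates $\widehat{\mathcal{O}}_{\Shbar,s} \cong \overline{\F}_p[[T_{11},T_{12},T_{22}]]$ with $D_{ij} = (1+T_{ij})\partial_{T_{ij}}$, and with the additional flag coordinate $T$ so that $\tilde{H}_2 = T - T^p$, each of $\Theta_1, \Theta_2, \Theta_3$ is ``apply $P(T;D_{11},D_{12},D_{22})$ to the function part'' for a quadratic polynomial $P$ in the commuting $D_{ij}$ with coefficients in $\overline{\F}_p[T]$.

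For the commutator identities (1)--(3) and (7), I would first derive the following general formula: if an operator $\Phi$ sends $f\, e_2^k(e_1+Te_2)^l \mapsto P(T; D_{ij})(f)\, e_2^{k+a}(e_1+Te_2)^{l+b}$ with $P$ of the above shape, then a direct expansion using the Leibniz rule and the fact that each $D_{ij}$ commutes with $\partial_T$ and with $T$-valued coefficients gives
\[
  [\Phi,\theta_4]\bigl(f\, e_2^k(e_1+Te_2)^l\bigr) = \bigl((a-b)\tilde{H}_2' \cdot P - \tilde{H}_2\, \partial_T P\bigr)(f)\, e_2^{k+a+p-1}(e_1+Te_2)^{l+b},
\]
where $\partial_T$ differentiates only the $T$-coefficients of $P$; the weight-dependent contribution $(l-k)\tilde{H}_2'$ from $\theta_4$ cancels cleanly, leaving only the constant $a-b$. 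Since $\tilde{H}_2' = 1$ in characteristic $p$, each of (1), (2), (3) then reduces to a short polynomial identity in $\overline{\F}_p[T][D_{11},D_{12},D_{22}]$; for instance (1) amounts to $2\Theta_1 - (T-T^p)(2TD_{11}-D_{12}) = \Theta_2$. For (7), both terms in the formula vanish: $\Theta_2^p$ has $T$-coefficients $2T^{p^2+p}, -(T^p+T^{p^2}), 2$ whose derivatives are zero in characteristic $p$ and shifts weight by $(p^2+p,0) \equiv (0,0) \pmod p$, while $\Theta$ is $T$-independent with weight shift $(2,2)$.

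Relations (4) and (5) rest on two elementary facts about the Serre-Tate chart. The $D_{ij}$ commute among themselves and with $T$, so $\Theta_i\Theta_j = \Theta_j\Theta_i$ holds by commutativity in $\overline{\F}_p[T][D_{11},D_{12},D_{22}]$. And each $D_{ij}$ is the Euler operator $x_{ij}\partial_{x_{ij}}$ in the multiplicative coordinate $x_{ij} = 1+T_{ij}$, so $D_{ij}^p(x_{ij}^n) = n^p x_{ij}^n = n x_{ij}^n$ in characteristic $p$, giving the Frobenius identity $D_{ij}^p = D_{ij}$. Combined with the additive Frobenius on commuting operators, (5) is immediate from $\Theta_1^p = T^{2p}D_{11}^p - T^p D_{12}^p + D_{22}^p = \Theta_3$. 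For (6), a direct polynomial computation reveals the cancellation
\[
  4\Theta_1\Theta_3 - \Theta_2^2 = (T-T^p)^2\bigl(4D_{11}D_{22}-D_{12}^2\bigr) = \tilde{H}_2^2\bigl(4D_{11}D_{22}-D_{12}^2\bigr);
\]
combined with \Cref{Hasse-simple-zero} (simple zero of $H_2$ along $D_2$), this shows $\Theta = (4\Theta_1\Theta_3-\Theta_2^2)/H_2^2$ is well-defined on all of $U_1$ and acts as the $T$-independent operator $4D_{11}D_{22}-D_{12}^2$ on Serre-Tate coordinates. Applying Frobenius and $D_{ij}^p = D_{ij}$ once more shows $\Theta^p$ acts as the same polynomial on the function part; the discrepancy between the weight shifts $(2p,2p)$ of $\Theta^p$ and $(2,2)$ of $\Theta$ is absorbed by the local expression $H_1^2 = e_2^{2p-2}(e_1+Te_2)^{2p-2}$, yielding $\Theta^p = H_1^2 \Theta$.

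The main obstacle is the algebraic cancellation in (6): expanding the fifteen monomials in $T$ and the commuting $D_{ij}$ arising from $4\Theta_1\Theta_3 - \Theta_2^2$ and verifying that the cross terms in $D_{11}D_{12}$ and $D_{12}D_{22}$ cancel entirely, while the coefficients of $D_{11}D_{22}$ and $D_{12}^2$ reduce to $\pm (T^2 - 2T^{p+1} + T^{2p}) = \pm(T - T^p)^2$, is elementary but delicate bookkeeping. Without this cancellation the operator $\Theta$ would not even be well-defined, so it is the genuinely non-formal content of the proposition.
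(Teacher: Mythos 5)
Your proof is correct and takes essentially the same approach as the paper: the proposition is checked on formal completions $\p^1_{\widehat{\mathcal{O}}_{\Shbar,s}}$ of ordinary points, where the $\Theta_i$ become explicit polynomials in the commuting Serre-Tate derivations $D_{ij}$ and $\tilde{H}_2 = T - T^p$, and all seven identities reduce to polynomial computations. The paper calls this a "simple computation," and your general commutator formula $[\Phi,\theta_4] = (a-b)\tilde{H}_2' P - \tilde{H}_2\,\partial_T P$ together with the explicit factorization $4P_1P_3 - P_2^2 = \tilde{H}_2^2(4D_{11}D_{22}-D_{12}^2)$ is a careful and correct elaboration of exactly that computation.
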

\begin{proof}
All of them can be checked locally
on fibers $\p^1_{\mathcal{O}_{\Shbar,s}}$ for $s \in \Shbar^{\text{ord}}(\overline{\F}_p)$. 
We can further base change to $\widehat{\mathcal{O}}_{\Shbar,s}$, since the map to the formal 
completion is faithfully flat. There they follow from a simple computation with the local expressions above,
and \Cref{theta_4-local-expression} with $\tilde{H}_2=T-T^p$.
\end{proof}

 \section{Extension to the flag Shimura variety} \label{Section3}

We extend $\Theta_{1,2,3,4}$ to maps on $\flag_{\overline{\F}_p}$,
by studying their formal local expressions at non-ordinary points. These local expressions
will imply a divisibility criterion 
for $H_1$ and $\theta_1$, which in turn will allow us to study the theta cycle for $\theta_1$ which defines 
the operator in \Cref{geometric-construction-the-map} inducing the generic entailment.  
 We also give alternative descriptions 
for $\pi_* \theta_{1,2,3}$ as maps on the Shimura variety.

\subsection{Extending $\Theta_{1,2,3}$ to $\flag$}
We extend $\Theta_{1,2,3}$ to $\flag$ by computing their poles along $D_1=\pi^{-1}(\Shbar^{\text{n-ord}})$. 
The next lemma contains the information to do this for $\lambda=0$.
\begin{lemma} \label{weight-0}
The section $s: \Omega^1_{\flag} \to \pi^*\Omega^1_{\Shbar}$ defined over $U_1$ extends to $\flag$ 
by postcomposing with $H_1: \pi^*\Omega^1_{\Shbar} \to \pi^*\Omega^1_{\Shbar} \otimes \LL(p-1,p-1)$.
\begin{proof}
Recall the $M \cap B$-reduction $I_{M \cap B} \subseteq I_{B}$ defined over $U_1$ by the splitting of the Hodge filtration. 
There exists an $M \cap B$-equivariant section $r: (\mathfrak{g}/\mathfrak{b})^{\vee} \to (\mathfrak{g}/\mathfrak{p})^{\vee}$, 
so that by \Cref{splitting-2} $s$ is defined by mapping $(\psi,x) \in 
F_{M \cap B}(\mathfrak{g}/\mathfrak{b})^\vee=\Omega^1_{U_1}$ with $\psi \in I_{M \cap B}$ 
to $(\psi,r(x)) \in \pi^*\Omega_{\Shbar^\text{ord}}$. 
Let $(\phi,x) \in \Omega^1_{\flag}$. Over $U_1$ there exists $g \in U_{P}(U_1)$ such that $g \phi \in I_{M \cap B}$. 
Then $s$ over $U_1$ sends $(\phi,x)=(g\phi,gx)$ to $(g\phi,r(gx))=(\phi,g^{-1}r(gx))=(\phi,r(gx))$,
the last equality since $U_{P}$ acts trivially on $\mathfrak{g}/\mathfrak{p}$. Thus, we want to prove that 
 $H_1g$ extends to $\flag$. Let $g_0 \in \Hom(L^{\vee} \otimes \mathcal{O}_{U_1}, L \otimes \mathcal{O}_{U_1})$ be induced by $g$,
 it is defined as the composition $g_0:  L^{\vee} \otimes \mathcal{O}_{U_1} \xrightarrow{\phi^{-1}} H/\omega \xrightarrow{\varphi^{-1}} \Ker(V)
 \xrightarrow{\phi} (L \oplus L^{\vee}) \otimes \mathcal{O}_{U_1} \to L \otimes \mathcal{O}_{U_1}$, 
 where $\varphi$ is the natural map $\Ker(V) \to H/\omega$. Since the determinant of $\varphi$ defines $H_1$ we see 
 that $H_1 \varphi^{-1}$, and hence $H_1g$ extend to $\flag$.
\end{proof}
\end{lemma}
Then we prove that the following common piece of $\Theta_{1,2,3}$ only has a simple pole.
\begin{lemma} \label{extend-s}
The map on $U_1$
 $$
 \theta\coloneqq H_1 \tilde{\theta}: \omega \xrightarrow{\nabla} H \otimes \Omega^1_{\flag} \xrightarrow{H_1 \pi_{\omega} \otimes s}
\omega \otimes \LL(p-1,p-1) \otimes \pi^* \Omega^1_{\Shbar},
$$ 
where $\tilde{\theta}$ is as in \Cref{magic-on-U1},
extends to  $\flag$. Moreover, on sections that come from $\omega/\Shbar$ via pullback it can be identified 
 with the map on $\Shbar$
$$
\omega \xrightarrow{\nabla} H \otimes \Omega^1_{\Shbar} \xrightarrow{H_1 \pi_\omega \otimes \text{id}} 
\omega \otimes \textnormal{det}^{p-1}\omega \otimes \Omega^1_{\Shbar}.
$$
\begin{proof}
The second statement is immediate from the first by functoriality of $\nabla$.
We prove that $\theta$ extends to $\flag$ by computing a local expression, we use the notation from  
\Cref{ig-coordinates}.
Let $W=\text{Spec}(R) \subset \Shbar$ such that $\omega$ is trivial with basis 
$\{e_1,e_2\}$, and work over $\A^1_W \subset \pi^{-1}(W)$, then $U_1$ is given by $R[1/(ad-bc)][T]$.
Let $d$ denote the differential $d: \mathcal{O}_{\pi^{-1}(W)} \to \Omega^1_{\pi^{-1}(W)}$. Let 
$f \in R[T]$, we compute that $H_1\tilde{\theta}(f e_{i})$ has no poles. 
On $\Ig_{W}$ we have the canonical basis $\{\tilde{e}_1,\tilde{e}_2\}$ satisfying 
$M(e_1,e_2)^{T}=(\tilde{e_1},\tilde{e}_2)^T$, which is horizontal for $\tilde{\theta}$.
Differentiating the relation $M^{(p)}=M V^{T}$ we obtain $dM=-M \circ d(V^{T}) \circ (V^{T})^{-1}$, so that 
\begin{equation} \label{eq234}
(\pi_\omega \otimes 1)\circ \nabla(e_1,e_2)^{T}=d(M^{-1})M(e_1,e_2)^{T}=d(V^T) \circ (V^{T})^{-1}(e_1,e_2)^{T}.
\end{equation}
This shows that the map $(H_1\pi_{\omega} \otimes 1) \circ \nabla$ extends to $\flag$. Concretely, it sends 
$fe_1$ to 
$$
fe_1 \mapsto [(ad-bc)df+f(dd(a)-bd(c))]e_1(e_1 \wedge e_2)^{p-1}+
f(ad(c)-cd(a))e_2(e_1 \wedge e_2)^{p-1},
$$
and similarly for $fe_2$.

Let $W'=W \cap \Shbar^{\text{ord}}$, the last ingredient of $\theta$ involves the section $s: \Omega^1_{\pi^{-1}(W')} \to \pi^* \Omega^1_{W'}$ 
which is defined by the diagram 
$$
\begin{tikzcd}[
  ar symbol/.style = {draw=none,"#1" description,sloped},
  isomorphic/.style = {ar symbol={\cong}},
  equals/.style = {ar symbol={=}},
  ]
\pi^{-1}\mathcal{O}_{W'} \arrow[r,hook] & \mathcal{O}_{\pi^{-1}(W')} \arrow[r,"d"] & \Omega^1_{\pi^{-1}(W')} \arrow[r,"s"] & \pi^* \Omega^1_{W'} \\
\pi^{-1}\mathcal{O}_{W'} \arrow[r,hook] \arrow[u,equal]& \mathcal{O}_{\mathbb{P}^1_{W'}} \ar[u,isomorphic] \arrow[r,"d"] & \Omega^1_{\mathbb{P}^1_{W'}}
 \ar[u,isomorphic] \arrow[ur,"r"],
\end{tikzcd}
$$
where the isomorphisms are given \'etale locally by the basis $\{\tilde{e}_1,\tilde{e}_2\}$,
and $r$ is the natural projection. 
Since the first square commutes for $g\in \pi^{-1}\mathcal{O}_W$
we have $s \circ d(g)=d_{W'}(g) \in \pi^* \Omega^1_{W'}$, where $d_{W'}: \pi^{-1}\mathcal{O}_{W'}  
\to \pi^{-1}\Omega^1_{W'}$.
Thus, to compute $\theta$, for $x \in \pi^{-1}\mathcal{O}_W$
 we replace $d(x)$ by $d_{W}(x)$ in \eqref{eq234}, and \Cref{weight-0} tells us that $(ad-bc)s \circ df$
 extends to $\flag$. Explicitly,
\begin{equation} \label{local-expression-1}
\theta(fe_1)=[(ad-bc)s \circ df+f(dd(a)-bd(c))]e_1(e_1 \wedge e_2)^{p-1}+f(ad(c)-cd(a))e_2(e_1 \wedge e_2)^{p-1},
\end{equation}
where now the differentials except $df$ correspond to $d: \mathcal{O}_W \to \Omega^1_W$. This proves that 
the map extends to $\flag$. 
\end{proof}
\end{lemma}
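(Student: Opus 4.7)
The second claim is a formality: if $f$ is a local section of $\omega$ pulled back from $\Shbar$, then by functoriality of $\nabla$ (\Cref{GM-base-change}) its Gauss--Manin derivative on $\flag$ already lies in $H\otimes \pi^*\Omega^1_\Shbar \subset H\otimes \Omega^1_\flag$, and the splitting $s$ of \Cref{igusa-splitting} acts as the identity on this summand. So the whole content is the first claim: that $H_1\tilde\theta$ has no pole along $D_1 = \pi^{-1}(\Shbar^{\text{n-ord}})$, i.e.\ that the simple pole of $\pi_\omega$ (the unit root splitting) is cancelled by the single factor of $H_1 = \det V$.

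My plan is to check this by an explicit local computation. Fix an open $W = \Spec R \subset \Shbar$ on which $\omega$ is trivial with basis $\{e_1,e_2\}$, and write $V = \begin{pmatrix} a & b \\ c & d \end{pmatrix}$ so that $H_1 = ad-bc$. Pass to the Igusa cover via \Cref{ig-coordinates}: on $\Ig_W$ we have a $V$-canonical basis $\{\tilde e_1,\tilde e_2\}$ with $\tilde e = M e$ where $M^{(p)} = M V^T$. Since $V$ is an isomorphism on canonical bases, the $\tilde e_i$ are horizontal for the map $\tilde\theta$ of \Cref{magic-on-U1} (compare the argument there using $V\nabla(e_i) = \nabla^{(p)}(Ve_i) = 0$). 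Because $\Ig \to \Shbar^{\ord}$ is finite \'etale, it suffices to verify regularity after this base change.

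The key algebraic move is to differentiate the relation $M^{(p)} = M V^T$. Since $dM^{(p)} = 0$ in characteristic $p$, we get
\[
dM = -M\, d(V^T)\, (V^T)^{-1},
\]
and hence, writing everything in the basis $\{e_i\}$ on $\Ig_W$,
\[
(\pi_\omega\otimes 1)\nabla(e_1,e_2)^T \;=\; d(M^{-1})\,M\,(e_1,e_2)^T \;=\; d(V^T)(V^T)^{-1}(e_1,e_2)^T.
\]
Now $(V^T)^{-1}$ has $H_1 = ad-bc$ in the denominator, but its entries multiplied by $H_1$ are the adjugate of $V^T$, which is a polynomial in $a,b,c,d$. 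Multiplying through by $H_1$ therefore gives a regular expression, e.g.\ of the shape
\[
H_1\,(\pi_\omega\otimes 1)\nabla(e_1) = \bigl[d\cdot da - b\cdot dc\bigr]\,e_1 + \bigl[a\cdot dc - c\cdot da\bigr]\,e_2,
\]
which visibly extends across $D_1$. Composing with $s$ replaces the differentials on $\flag$ with the corresponding differentials on $W$ (because the diagram characterizing $s$ factors $d$ on $\pi^{-1}\mathcal{O}_W$ through $d_W$), giving a regular local expression for $\theta$ on all of $\flag$.

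The main obstacle is bookkeeping the local formula correctly: identifying $\pi_\omega$ on the basis $\{e_i\}$ requires passing to the Igusa cover and using horizontality of $\tilde e_i$, and one must be careful that multiplying by $H_1 = \det V$ is exactly the right power to clear the adjugate denominators (no more, no less). Once the formula above is established on $\Ig_W$, descent along the \'etale cover and gluing over an open cover of $\Shbar$ is immediate. For the extension to $\flag^\tor$, the same calculation applies once one replaces $\nabla$ with its log extension and notes that the additional terms coming from the boundary live in $\Omega^1_{\Shbar^\tor}(\log D)$, so subcanonical extensions pose no additional difficulty.
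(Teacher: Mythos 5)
Your proof is correct and follows essentially the same route as the paper: pass to the Igusa cover to get the canonical basis $\{\tilde e_i\}$ that is $\tilde\theta$-horizontal, differentiate $M^{(p)} = MV^T$ to obtain $(\pi_\omega\otimes 1)\nabla(e) = d(V^T)(V^T)^{-1}e$, and observe that multiplying by $H_1 = \det V$ clears the adjugate denominator exactly, after which the section $s$ converts the differentials to base differentials. One sentence is slightly misstated: \emph{``Because $\Ig\to\Shbar^{\ord}$ is finite \'etale, it suffices to verify regularity after this base change''} cannot be taken literally, since $\Ig$ only lives over the ordinary locus and so does not see $D_1$; what actually happens (and what you in fact do next) is that the formula computed on $\Ig_W$ is a polynomial expression in $a,b,c,d$ and their differentials, hence descends to $W\cap\Shbar^{\ord}$ and then visibly extends as a regular expression across $\{ad-bc=0\}$ to all of $W$.
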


The local expression in \eqref{local-expression-1} is easier to compute using the map on $\Shbar$. 
This lemma is just a careful check that the construction using the section $s$ is compatible with 
the map on $\Shbar$.

\begin{cor}
The maps $H_1 \cdot \Theta_{1,2,3}$ extend to maps of sheaves on $\flag$. 
\begin{proof}
 By \Cref{magic-on-U1} the map $\theta$ from \Cref{extend-s} sends $\LL$ to $\LL \otimes \LL(p-1,p-1) \otimes \pi^* \Omega^1_{\Shbar}$, being 
 a condition that can be checked on an open dense subset. Then $H_1 \cdot \Theta_{1,2,3}$ can be constructed
 on $\flag$ using the same definition as for $\Theta_{1,2,3}$, but replacing $\tilde{\theta}$ by $\theta$.
\end{proof}
\end{cor}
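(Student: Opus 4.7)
The plan is to construct $H_1 \cdot \Theta_i$ on all of $\flag$ by replaying the recipe that defines $\Theta_i$ on $U_1$, but with the globally-defined map $\theta = H_1 \tilde\theta$ of \Cref{extend-s} playing the role of $\tilde\theta$. The algebraic projections
\[
\Pi_i : \Sym^2 \omega \longrightarrow \LL(\lambda_i)
\]
appearing after the theta step are built purely from $V:\omega\to\omega^{(p)}$ and the tautological quotient $\omega\to\omega/\LL$, so they are defined on all of $\flag$ from the outset with no poles. The extension problem for $H_1\cdot\Theta_{1,2,3}$ therefore collapses to producing global versions of the maps $L \xrightarrow{\theta_L} L\otimes\pi^*\Omega^1_{\Shbar}$ after multiplication by $H_1$, for $L=\LL$ and $L=\omega/\LL$.

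The crux is then to check that the everywhere-defined $\theta$ of \Cref{extend-s} carries the sub-line-bundle $\LL\subset\omega$ into $\LL\otimes\LL(p-1,p-1)\otimes\pi^*\Omega^1_{\Shbar}\subset\omega\otimes\LL(p-1,p-1)\otimes\pi^*\Omega^1_{\Shbar}$, and analogously sends $\omega/\LL$ into $(\omega/\LL)\otimes\LL(p-1,p-1)\otimes\pi^*\Omega^1_{\Shbar}$. On the open dense subset $U_1$ both assertions are exactly the content of \Cref{magic-on-U1}. Because the containment of one coherent subsheaf in another inside a common torsion-free ambient sheaf is a closed condition detectable on any dense open subset, both inclusions propagate from $U_1$ to all of $\flag$. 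This is the only non-formal point in the argument, and it is the main obstacle I anticipate: one must be confident that no spurious component along $D_1$ pushes $\theta(\LL)$ out of the $\LL$-summand, and this is handled abstractly by density plus torsion-freeness rather than by any local calculation at non-ordinary points.

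With $\theta|_\LL$ and $\theta|_{\omega/\LL}$ in hand as maps of line bundles on $\flag$, I would define $H_1\cdot\Theta_i$ on the basic weights $(0,1)$ and $(1,0)$ by post-composing with $\Pi_i$ on the $\pi^*\Omega^1_{\Shbar}$ factor, and on arbitrary weights by the Leibniz rule exactly as in \Cref{operators-on-U}. The resulting maps agree with $H_1\cdot\Theta_i$ on the dense open $U_1$, and since the target is torsion-free this is the unique such extension. Hecke equivariance away from $p$ is inherited from that of each constituent, since the construction is built from $\nabla$, $V$, the polarization, and the projections $\omega\to\omega/\LL$, all of which are compatible with prime-to-$p$ quasi-isogenies.

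The passage to (sub)canonical extensions on $\flag^\tor$ is transparent: the log Gauss-Manin connection, the extended Verschiebung on $\omega^\tor$, and the logarithmic Kodaira-Spencer isomorphism $\omega^\can(2,0,0)\cong\Omega^1_{\Sh^\tor/\Z_p}(\log D)$ are all provided by \Cref{toroidal}, and \Cref{extend-s} already treats the canonical-extension version of $\theta$. The entire construction then runs verbatim, with the same subsheaf-preservation check deduced from the $\flag^\tor$-analogue of \Cref{magic-on-U1} on the complement of the boundary.
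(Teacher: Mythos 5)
Your proposal is correct and takes essentially the same route as the paper: the key point in both is that the globally defined $\theta=H_1\tilde\theta$ from \Cref{extend-s} preserves the subsheaf $\LL\subset\omega$ (and hence also descends to $\omega/\LL$), which follows from \Cref{magic-on-U1} on $U_1$ together with the observation that this is a closed condition (vanishing of a map of line bundles into a torsion-free target), and then one replays the definition of $\Theta_{1,2,3}$ with $\theta$ in place of $\tilde\theta$. Your write-up is somewhat more detailed about the torsion-freeness argument and the toroidal case, but there is no difference in substance.
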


However, multiplying by $H_1$ might not be 
necessary for some of the operators. We will show that $\Theta_{3}$ already extends to $\flag$.
To do that we compute a local expression for $\Theta_3$ on the 
formal neighbourhood of a $p$-rank $1$ point. 
For the purposes of checking poles along $D_1$ it is enough to check them inside $\Shbar^{\ge 1}=\Shbar^{\text{ord}} \cup \Shbar^{=1}$,
since by Hartogs lemma a function on $U_1$ that extends to $\flag_{\Shbar^{\ge 1}}$ already extends to $\flag$. 
Being a local problem, we will work on formal completions $\widehat{\mathcal{O}}_{\Shbar,q}$ (or their pullbacks to $\flag$)
for a fixed $q \in \Shbar^{=1}(\overline{\F}_p)$, and use Grothendieck--Messing theory to get a convenient basis for $\omega$.
Write  $\widehat{O}_{\Shbar,q}=\overline{\F}_p[[T_{11},T_{12},T_{22}]]$, and 
let $R=\mathcal{O}_{\Shbar,q}/ \mathfrak{m}_{q}^{p}$. As in the proof of \Cref{Hasse-simple-zero}
we can choose a basis of $\omega_{R}$ such that $V$ is represented 
by the matrix \begin{equation} \label{V-prank=1}
  V_R=\begin{pmatrix}
  1 & 0 \\
  T_{12} & T_{22}
  \end{pmatrix}.
  \end{equation}
Let $q \in \Shbar^{=1}(\overline{\F}_p)$, and fix a basis $\{e_1,e_2\}$ of $\omega$ on $\widehat{O}_{\Shbar,q}/\mathfrak{m}^{p}_q$ 
such that $V$ has matrix \eqref{V-prank=1}. Consider $D_{ij}\coloneqq \textnormal{ks}(e_i e_j)^{\vee}$ as derivations on 
$R$.

\begin{lemma} \label{KS-modulo}
  Let $q \in \Shbar^{=1}(\overline{\F}_p)$. Then 
  $$
  D_{ij}= \frac{\partial}{\partial T_{ij}}  \bmod{\; \mathfrak{m}_{\Shbar,q}^{p-1}}.
  $$
  \end{lemma}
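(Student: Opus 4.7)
The plan is to reduce the question to a direct computation via Grothendieck-Messing theory on the PD thickening $R_p := \widehat{\mathcal{O}}_{\Shbar,q}/\mathfrak{m}_q^p$. In characteristic $p$ the ideal $\mathfrak{m}_q/\mathfrak{m}_q^p$ carries a canonical PD structure (one declares $\gamma_n(x)=0$ for $n \ge p$), making $R_p$ a PD thickening of $\kappa(q)$. By the crystal property and \Cref{gm-gauss-manin}, $H^1_{dR}(A/R_p) \cong D(A[p^\infty]_q)(W,\kappa(q)) \otimes_W R_p$, and the Gauss-Manin connection is trivial on the fixed basis $\{\overline e_i\}$ coming from the first factor.

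I would choose $\{\overline e_1,\dots,\overline e_4\}$ as in the proof of \Cref{Hasse-simple-zero}: a symplectic basis of $D_q$ whose reduction has the $V$-matrix \eqref{V-p-rank-1}. That same proof identifies $\omega_{R_p}$ with the span of $\overline e_1+T_{12}\overline e_3+T_{11}\overline e_4$ and $\overline e_2+T_{22}\overline e_3+T_{12}\overline e_4$, and the basis $\{e_1,e_2\}$ of $\omega_{\widehat{\mathcal{O}}_{\Shbar,q}}$ that the statement fixes (via Hensel) reduces to this modulo $\mathfrak{m}_q^p$. Using horizontality of the $\overline e_i$, one then computes directly
\begin{align*}
\nabla(e_1) &= \overline e_3\otimes dT_{12}+\overline e_4\otimes dT_{11},\\
\nabla(e_2) &= \overline e_3\otimes dT_{22}+\overline e_4\otimes dT_{12}
\end{align*}
in $H_{R_p}\otimes\Omega^1_{R_p/\kappa(q)}$. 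Pairing with $e_j$ via the symplectic form (legitimate since $\omega$ is isotropic and $d\lambda$ is the identity on the level of $H^1_{\dR}$) and using that the only non-trivial pairings among the $\overline e_i$ relevant here are $\langle \overline e_1,\overline e_4\rangle=\langle\overline e_2,\overline e_3\rangle=1$, one obtains $\textnormal{ks}(e_ie_j) = dT_{ij}$ in $\Omega^1_{R_p/\kappa(q)}$ (up to the global sign fixed by the paper's convention).

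To translate this back to $\widehat{\mathcal{O}}_{\Shbar,q}$ I would use the right-exact sequence
\[
\mathfrak{m}_q^p/\mathfrak{m}_q^{2p} \xrightarrow{\,d\,} \Omega^1_{\widehat{\mathcal{O}}_{\Shbar,q}}\otimes R_p \to \Omega^1_{R_p/\kappa(q)} \to 0.
\]
The key observation is that $d(\mathfrak{m}_q^p)\subset \mathfrak{m}_q^{p-1}\Omega^1$, so the kernel on the right lies in $\mathfrak{m}_q^{p-1}\Omega^1/\mathfrak{m}_q^p\Omega^1$. Hence $\textnormal{ks}(e_ie_j)\equiv dT_{ij}\pmod{\mathfrak{m}_q^{p-1}\Omega^1}$ in $\Omega^1_{\widehat{\mathcal{O}}_{\Shbar,q}}$. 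Dualizing via the Kodaira-Spencer isomorphism yields $D_{ij}(T_{lk})\equiv \delta_{(ij),(lk)}\pmod{\mathfrak{m}_q^{p-1}}$ on the generators, and a routine Leibniz argument (the difference $D_{ij}-\partial/\partial T_{ij}$ is a continuous derivation sending each $T_{lk}$ into $\mathfrak{m}_q^{p-1}$) propagates this to all of $\widehat{\mathcal{O}}_{\Shbar,q}$.

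The only delicate point is obtaining the sharp exponent $p-1$ rather than $p$: the Grothendieck-Messing calculation takes place natively on $R_p = \widehat{\mathcal{O}}_{\Shbar,q}/\mathfrak{m}_q^p$, so a naive translation would yield only $\bmod\ \mathfrak{m}^p$; the improvement to $\mathfrak{m}^{p-1}$ comes precisely from the fact that $d$ lowers the $\mathfrak{m}$-adic order by one. Once this is pinned down, all the linear-algebraic steps are straightforward given the explicit form of $\omega_{R_p}$ recalled from \Cref{Hasse-simple-zero}.
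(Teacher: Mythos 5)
Your proof is correct and follows the paper's own argument: both reduce to computing $\nabla(e_i)$ on $R_p=\widehat{\mathcal{O}}_{\Shbar,q}/\mathfrak{m}_q^p$ using the Grothendieck--Messing basis from \Cref{Hasse-simple-zero} and the horizontality of the $\overline e_i$ from \Cref{gm-gauss-manin}, then observe that knowing the connection on $R_p$ determines Kodaira--Spencer only modulo $\mathfrak{m}_q^{p-1}$. Your conormal-sequence step makes explicit what the paper compresses into one line; the only thing to be careful about (in both write-ups) is that \Cref{gm-gauss-manin} gives $\nabla(\overline e_i)=0$ in the module of PD differentials $\Omega^1_{R_p,\delta}$ rather than in the K\"ahler module $\Omega^1_{R_p/\kappa(q)}$ that your exact sequence targets, but since the additional relations $\gamma_{p-1}(x)\,dx=0$ (for $x$ in the PD ideal $\mathfrak{m}_q/\mathfrak{m}_q^p$) also lie in $\mathfrak{m}_q^{p-1}\Omega^1$, the exponent $p-1$ is unaffected.
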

  
  \begin{proof}
  Dually, we prove that $\langle \nabla(e_i),e_j \rangle=d T_{ij} \bmod{\; \mathfrak{m}_{\Shbar,q}^{p-1}}$. 
  Let $R=\mathcal{O}_{\Shbar,q}/\mathfrak{m}_{\Shbar,q}^{p}$,
  then
   $$
   \langle \nabla(e_i),e_j \rangle=\langle \nabla_{R}(e_{R,i}), e_{R,j} \rangle
    \bmod{ \;\mathfrak{m}_{\Shbar,q}^{p-1}}
   $$
   since $\nabla_R=\nabla \bmod{\; \mathfrak{m}_{\Shbar,q}^{p}}$ determines $\text{ks}
   \bmod{\; \mathfrak{m}_{\Shbar,q}^{p-1}}$. In the notation of \eqref{omega_R}
   \begin{align*}
   e_{R,1}&=\overline{e}_1+T_{12}\overline{e}_3+T_{11}\overline{e}_4 \\
   e_{R,2}&=\overline{e}_2+T_{22}\overline{e}_3+T_{12}\overline{e}_4,
   \end{align*}
   where $\{\overline{e}_i\}$ is a symplectic basis on $H^1_{\kappa(q)}$ coming 
   from a trivialization of $A[p]_{\kappa(q)}$. By \Cref{gm-gauss-manin}
   $\nabla_R(\overline{e}_i)=0$. Then 
   $$
   \langle \nabla(e_1),e_1 \rangle=\langle dT_{12} \overline{e}_3+ dT_{11} \overline{e}_4, \overline{e}_1 \rangle=
   dT_{11} \bmod{\; \mathfrak{m}_{\Shbar,q}^{p-1}},
   $$
   and similarly for the others. 
  \end{proof}

  Using the lemma above and the local expression \eqref{local-expression-1} we can compute the following local expressions for 
  $H_1 \Theta_{1}$ and $H_1 \Theta_{3}$.
  \begin{prop} \label{full-local-expression}
  Let $q \in \Shbar^{=1}(\overline{\F}_p)$, and $\{e_1,e_2\}$ be a basis of $\omega$ on $\widehat{O}_{\Shbar,q}/\mathfrak{m}^{p}_{q}$
  satisfying \eqref{V-prank=1}. 
  We have the following local expressions at $\mathbb{A}^1_{T} \subset \p^1_{\widehat{\mathcal{O}}_{\Shbar,q}}$ with respect 
to the local sections $f e^{k-l}_2 (e_1 \wedge e_2)^l \in \LL(k,l)$ for $f \in \widehat{\mathcal{O}}_{\Shbar,q}[T]$,
and $g e^i_1e^{k-l-i}_2(e_1 \wedge e_2)^l$
 for $g \in \widehat{\mathcal{O}}_{\Shbar,q}$, $i\in \Z$.
\begin{align*}
H_1\Theta_1(f)&=kf \bmod{\mathfrak{m}_{q}}\\
H_1\Theta_3(g)&=0  \bmod{\mathfrak{m}_{q}}
\end{align*}

\begin{proof}
Using the explicit matrix of $V$ in \eqref{V-prank=1} on
the local expressions for $\theta$ in \Cref{extend-s}, as well as in \eqref{projections-from-KS};
and the knowledge of Kodaira--Spencer modulo
$\mathfrak{m}^{p-1}_{q}$ of \Cref{KS-modulo} we get 
\begin{align*}
  H_1\Theta_{1}(e^{k-l}_2 (e_1 \wedge e_2)^l)&=k \bmod{\mathfrak{m}_{q}^{p-1}} \\
H_1\Theta_{3}(e^i_1e^{k-l-i}_2(e_1 \wedge e_2)^l)&=(k-i)T^2_{22} e^i_1 e^{k-l-i+2p}_2 (e_1 \wedge e_2)^{l+p-1} \\
&+i T_{22}T_{12} e^{i-1}_1 e^{k-l-i+2p+1}_2 (e_1 \wedge e_2)^{l+p-1} \\
&-T_{22}ie^{i+p-1}_1 e^{k-l-i+p+1}_2 (e_1 \wedge e_2)^{l+p-1} \bmod{\mathfrak{m}_{q}^{p-1}}.
\end{align*}
To conclude we check that $\Theta_{1}$ and $\Theta_{3}$ extend to $\flag$ when applied to $\mathcal{O}_{\flag}$. 
It is enough to check this for $\Pi_{1,3}$. We use \Cref{splitting-2} to identify $\pi_{1,3}$ over $U$ with maps 
of $T$-representations. For $\Pi_1$ it follows from the fact that $-\beta \to \mathfrak{g}/\mathfrak{b}$ is a map of $B$-representations. 
For $\Pi_3$, we want to check that $\pi_3$ already extends to $U_2$. Let $\tilde{M}$ be the Levi of $G$
 associated to
$\{\beta\} \subseteq \Delta$. Then over $U_2$ there is a $\tilde{M} \cap B$ reduction 
$I_{\tilde{M} \cap B} \subseteq I_{B}$ by considering the splitting $\LL \oplus V^{-1}(\LL^{(p)})=H$. Then the map 
$-2\alpha-\beta \to \mathfrak{g}/\mathfrak{b}$ giving raise to $\pi_3$ over $U$ is also $\tilde{M} \cap B$-equivariant. 
\end{proof}
  \end{prop}

  It is then clear that $\Theta_3$ already extends to $\pi^{-1}(\Shbar^{\ge 1})$, since it does pointwise,
  and hence to $\flag$. 
  Including the central character in the Kodaira--Spencer isomorphism 
  we get Hecke equivariant maps. 
  \begin{prop} \label{weights-thetas}
  Define $\theta_{1,2,3}$ on $\flag$ by 
  \begin{align*}
  \theta_1&\coloneqq H_1 \Theta_1: \LL(k,l,c) \to \LL(k+p+1,l+p-1,c) \\
  \theta_2&\coloneqq H_1 \Theta_2: \LL(k,l,c) \to \LL(k+2p,l+p-1,c) \\
  \theta_3&\coloneqq \Theta_3: \LL(k,l,c) \to \LL(k+2p,l,c) \\
   \theta_4&: \LL(k,l,c) \to \LL(k+p-1,l,c+p-1).
  \end{align*}
  They are Hecke equivariant away from $p$, and they satisfy relations as in \Cref{relations-U1}, namely
  \begin{enumerate}
    \item $[\theta_1,\theta_4]=\theta_2$.
    \item $[\theta_2,\theta_4]=2H_1\theta_3$.
    \item $[\theta_3,\theta_4]=0$.
    \item $[\theta_{i},\theta_j]=0$ for $i,j=1,2,3$. 
    \item $\theta^p_1=H^p_1\theta_3$.
    \item The operator $\Theta\coloneqq \frac{1}{H^2_2}(4H_1\theta_1 \theta_3- \theta^2_2)$ is well-defined and satisfies
     $\Theta^p=H^{2p}_1\Theta$.
    \end{enumerate}

  \end{prop}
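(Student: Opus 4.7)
The plan is to reduce each assertion to material already in place.

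\emph{Extension to $\flag^{\tor}$.} The maps $\theta_1=H_1\Theta_1$ and $\theta_2=H_1\Theta_2$ already extend from $U_1$ to $\flag$ by the corollary following \Cref{extend-s}, since the extra factor of $H_1$ cancels the pole coming from the unit root splitting. For $\theta_3=\Theta_3$ one uses the local expression of \Cref{full-local-expression}: every term there carries an explicit factor of $T_{22}$, which is a uniformizer of $D_1$ at a generic point of $\Shbar^{=1}$, so $\Theta_3$ has no pole along $D_1$ above $\Shbar^{\ge 1}$. Combined with its definition on $\pi^{-1}(\Shbar^{\mathrm{ord}})$ this gives an extension to $\pi^{-1}(\Shbar^{\ge 1})$, and \Cref{Hartogs} extends $\Theta_3$ to all of $\flag$ since the missing locus has codimension two. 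The passage to $\flag^{\tor}$ with (sub)canonical poles is carried out by reproducing the construction using the log Gauss-Manin connection, the extended $V$, the Hasse invariants, and the log Kodaira-Spencer isomorphism on $\omega^{\tor}$ provided by \Cref{toroidal}; the output lands in $\LL^{\can}(\cdot,\cdot)$ with no log pole, and the fact that $D_1^{\tor}$ meets the boundary only in codimension two in $\flag^{\tor}$ lets \Cref{Hartogs} absorb any remaining boundary issues.

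\emph{Hecke equivariance.} Every ingredient in the construction is Hecke equivariant away from $p$: Gauss-Manin and Verschiebung are intrinsic to the universal (semi-)abelian scheme and compatible with prime-to-$p$ quasi-isogenies, the Hasse invariants are intrinsic to $V$, and the Kodaira-Spencer isomorphism is equivariant once the central character is incorporated, as discussed in the Hecke subsection.

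\emph{The seven relations.} Each identity is an equality of maps of coherent sheaves on $\flag^{\tor}$, so it suffices to verify it after restriction to the open dense $U_1$, where \Cref{relations-U1} applies. The relations for the new $\theta_i$ then follow from those for the $\Theta_i$ by formal manipulation, using only the Leibniz rule for $\theta_4$ together with the vanishing $\theta_k(H_1)=0$ for $k=1,2,3,4$ already established. For example, $[\theta_1,\theta_4]=[H_1\Theta_1,\theta_4]=H_1[\Theta_1,\theta_4]=H_1\Theta_2=\theta_2$; $\theta_1^p=(H_1\Theta_1)^p=H_1^p\Theta_1^p=H_1^p\Theta_3=H_1^p\theta_3$; and the new $\Theta$ equals $H_1^2$ times the $\Theta$ of \Cref{relations-U1}, so raising to the $p$-th power and invoking the old identity $\Theta_{\mathrm{old}}^p=H_1^2\Theta_{\mathrm{old}}$ yields $\Theta^p=H_1^{2p}\Theta$. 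The commutations with $\theta_4$ are obtained the same way by pulling out the relevant powers of $H_1$.

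\emph{Main obstacle.} The only step with real content is the pole analysis of $\Theta_3$ along $D_1$, which depends crucially on the Grothendieck-Messing-adapted basis at a $p$-rank one point giving the shape \Cref{V-prank=1} of $V$, together with the computation of $\mathrm{ks}$ modulo $\mathfrak{m}^{p-1}$ in \Cref{KS-modulo}. Once the overall $T_{22}$ factor in \Cref{full-local-expression} is visible, everything else is bookkeeping using the structural lemmas already proven.
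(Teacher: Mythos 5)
Your proof is correct and follows the same route as the paper: the extension of $\theta_{1,2}=H_1\Theta_{1,2}$ comes from the corollary after \Cref{extend-s}, the extension of $\theta_3=\Theta_3$ from the $T_{22}$-factor visible in \Cref{full-local-expression} plus \Cref{Hartogs}, the toroidal version from \Cref{toroidal}(7), and the relations reduce to \Cref{relations-U1} on $U_1$. The only minor wording slip is attributing the formal manipulations to the Leibniz rule for $\theta_4$ alone: the steps $(H_1\Theta_i)^p=H_1^p\Theta_i^p$ and $[\theta_i,\theta_j]=H_1^2[\Theta_i,\Theta_j]$ also use that $\Theta_{1,2,3}$ commute with multiplication by $H_1$, which requires the Leibniz rule and $\Theta_i(H_1)=0$ for those operators as well, but this is established in the preceding proposition so there is no gap.
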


 Looking at the local expression for $\theta_1$ we immediately obtain a criterion for the divisibility 
 of $\theta_1$ by $H_1$.

\begin{prop} \label{divisibility-properties}
  Let $f \in \LL(k,l)$ be a local section such that $H_1 \nmid f$. 
  Then 
  $H_1 \mid \theta_1(f)$ if and only if
  $p \mid k$. 
  \begin{proof}
  It follows from \Cref{full-local-expression}, since if $H_1 \nmid f$ we can take 
  $q \in \Shbar^{=1}(\overline{\F}_p)$ so that 
   $\tilde{H}_1=T_{22}$ doesn't divide $f$
  in these coordinates. Then in these coordinates $H_1 \mid \theta_1(f)$ implies $p \mid k$.
  Conversely, if $T_{22} \mid \theta_1(f)$ for all $q \in \Shbar^{=1}(\overline{\F}_p)$ 
  then $H_1 \mid \theta_1(f)$. 
  \end{proof}
  \end{prop}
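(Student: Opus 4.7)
The approach is to reduce the divisibility question to a local calculation in the formal neighborhood of a $p$-rank $1$ point, where $H_1$ becomes a single regular parameter, and then to read off the condition directly from \Cref{full-local-expression}. Specifically, by \Cref{Hasse-simple-zero} $H_1$ has a simple zero along $D_1$, so $H_1 \mid \theta_1(f)$ is equivalent to the image of $\theta_1(f)$ in the quotient sheaf $\LL(k+p+1, l+p-1)/H_1 \cdot \LL(k+2, l)$ vanishing; this quotient is a coherent sheaf supported on $D_1$. Since the $p$-rank $1$ locus $\Shbar^{=1}$ is open and dense in the non-ordinary locus, the vanishing can be tested at the formal completions of closed points $q \in \Shbar^{=1}(\overline{\F}_p)$. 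The analogous reduction works on $\flag^{\tor}$ using that the complement of $\Shbar^{=1}$ inside $D_1$ remains of positive codimension after toroidal compactification by \Cref{toroidal}(7).

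Fix such a $q$ and pick a basis $\{e_1, e_2\}$ of $\omega_{\widehat{\mathcal{O}}_{\Shbar, q}}$ realizing the matrix \eqref{V-prank=1}, so that $\tilde H_1 = T_{22}$. Writing a local section of $\LL(k,l)$ as $f = g \cdot e_2^{k-l}(e_1 \wedge e_2)^l$ with $g \in \widehat{\mathcal{O}}_{\Shbar,q}[T]$, the argument producing \Cref{full-local-expression} -- combining the exact local formula of \Cref{extend-s} with the Kodaira--Spencer identification -- actually yields the exact identity
\[
\theta_1(f) = \bigl(k g + T_{22}(T^2 D_{11} g - T D_{12} g + D_{22} g)\bigr) \cdot e_2^{k-l+2}(e_1 \wedge e_2)^{l+p-1}
\]
in $\widehat{\mathcal{O}}_{\Shbar, q}[T]$; the ``mod $\mathfrak{m}_q^{p-1}$'' fuzz that appears in the statement of \Cref{full-local-expression} enters only when one substitutes $D_{ij} \leadsto \partial/\partial T_{ij}$ via \Cref{KS-modulo}, and it is not needed here.

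Both directions now follow from this identity. For the forward implication, assuming $H_1 \mid \theta_1(f)$ and $H_1 \nmid f$, I can pick $q$ so that the reduction of $g$ modulo $T_{22}$ is nonzero in $\kappa(q)$ (equivalently, $f$ has nonzero image in the fiber of $\LL(k,l)/H_1 \LL(k,l)$ at $q$); reducing the displayed identity modulo $(T_{22}, \mathfrak{m}_q)$ then gives $k \bar g(q) = 0$, forcing $p \mid k$. For the converse, when $p \mid k$ the identity immediately gives $T_{22} \mid \theta_1(f)$ at every $q \in \Shbar^{=1}(\overline{\F}_p)$, and by the first-paragraph reduction this yields $H_1 \mid \theta_1(f)$ globally. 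The step requiring most care is checking that the $D_{ij}$-version of the formula really is exact -- that the only mod-$\mathfrak{m}_q^{p-1}$ fuzz in \Cref{full-local-expression} comes from Kodaira--Spencer and not from an earlier step -- which is transparent from its derivation but is the main technical point to flag.
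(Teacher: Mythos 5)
Your approach is the same as the paper's: reduce to the formal neighborhood of a $p$-rank~$1$ point, invoke \Cref{full-local-expression}, and read off divisibility by $T_{22}=\tilde H_1$. However, I do not think the exactness claim you single out as ``the main technical point to flag'' is correct, and fortunately it is also unnecessary. The terms $T_{22}\,D_{ij}(g)$ really are exact, because $dg=\sum D_{ij}(g)\,D_{ij}^\vee$ is a tautology for a dual basis and $\text{ks}^{-1}(D_{ij}^\vee)=e_ie_j$ by definition. But the constant $k$ in front of $g$ comes from the terms $f\cdot dT_{ij}$ in \Cref{extend-s} (e.g.\ $\theta(fe_1)$ produces $f\,dT_{12}\,e_2(e_1\wedge e_2)^{p-1}$), and turning those into a clean $kg$ requires identifying $dT_{ij}$ with $D_{ij}^\vee$, which is precisely the content of \Cref{KS-modulo} and holds only modulo $\mathfrak m_q^{p-1}$. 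So the fuzz survives in the $kg$ term even after rewriting the derivatives as $D_{ij}$.

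What saves the argument is that you never need the exact formula: you should evaluate at a closed point rather than merely reduce modulo $T_{22}$. Concretely, since $H_1\nmid f$ and $H_1$ has a simple zero along $D_1$ (\Cref{Hasse-simple-zero}), the class of $f$ in $\LL(k,l)|_{D_1}$ is nonzero, hence nonvanishing on a dense open; because $\pi^{-1}(\Shbar^{=1})$ is dense in $D_1$, you can choose a geometric point $\tilde q\in D_1\cap\pi^{-1}(\Shbar^{=1})$ over some $q\in\Shbar^{=1}$ at which $g(\tilde q)\ne0$. (Your phrase ``$k\bar g(q)=0$'' with $q\in\Shbar^{=1}$ is imprecise: $g$ lives in $\widehat{\mathcal O}_{\Shbar,q}[T]$, so its class mod $\mathfrak m_q$ is a polynomial in $T$; you must further specialize $T$.) Reducing the formula mod $\mathfrak m_{\tilde q}$ kills both $T_{22}(\cdots)$ and the entire $\mathfrak m_q^{p-1}$ error, leaving $k\,\bar g(\tilde q)=0$ and forcing $p\mid k$. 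The converse works the same way: when $p\mid k$, the formula mod $\mathfrak m_{\tilde q}$ gives $\theta_1(f)(\tilde q)=0$ for every $\tilde q\in D_1\cap\pi^{-1}(\Shbar^{=1})$, hence $\theta_1(f)|_{D_1}=0$ and $H_1\mid\theta_1(f)$. With these adjustments the proof is complete and matches the paper's.
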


  Combining \Cref{divisibility-properties} with the identity $\theta^p_1=H^p_1 \theta_3$ 
  we obtain the map that produces the generic entailment, similarly to the way one determines the theta cycle
  for the modular curve. 

  \begin{theorem} \label{geometric-construction-the-map}
  Let $(k,l) \in X^*(T)$, and write $k=pb+a$ with $1 \le a \le p$. Then the map 
  $$
  \theta^1_{(k,l)}\coloneqq \frac{1}{H^{p-a+1}_1}\theta_1^{p-a+1} : \LL(k,l) \to \LL(2p-2a+k+2,l)
  $$
  exists as a map of sheaves on $\flag$, and it is Hecke equivariant away from $p$.
 
 \begin{proof}
 We prove the statement by induction on $a$, starting at $p$, then $p-1$, $p-2$ and so on. For $f \in \LL(k,l)$
 we write $a(f)$ to emphasize that it depends on $f$.
 Let $f \in \LL(k,l)$ be a local section over some open $V \subseteq \flag$. We want to prove that 
 $H^{p-a+1}_1 \mid \theta^{p-a+1}_1(f)$.
 If $V \subset U_1$ then the assertion is trivial,
 suppose it's not. 
 First assume $a=p$, if 
 $H_1 \mid f$ the statement is obvious, otherwise it follows from \Cref{divisibility-properties}. Now assume 
 $a \neq p$. If $H_1 \mid f$ write $f=H_1 f_0$, then the statement $H^{p-a+1}_1 \mid 
 \theta^{p-a+1}_1(f)$ follows from $H^{p-a(f_0)+1}_1 \mid 
 \theta^{p-a(f_0)+1}_1(f_0)$ for $a(f_0)=a(f)+1$, which appears earlier in the induction process.
 Therefore, we may assume $H_1 \nmid f$. 
 By \Cref{divisibility-properties} we see that 
 $H_1 \nmid \theta^{p-a}_1(f)$ but $H_1 \mid \theta^{p-a+1}_1(f)$. Let $N$ be the exact power of $H_1$ 
 that divides $\theta^{p-a+1}(f)$. We can assume that $a>1$ and $1 \le N \le p-1$, otherwise the statement 
 follows from $\theta^p_1=H^p_1 \theta_3$.
 Then $h\coloneqq \frac{1}{H^N_1} \theta^{p-a+1}_1(f)$ satisfies $a(h)=N+1$.
  We know that $H^{p-N}_1 \mid \theta^{a-1}_1(h)$
 by the relation 
 $\theta^p_1=H^p_1 \theta_3$, so for some $0 \le i \le a-2$, the $a(\theta^i(h))$ has to be divisible 
 by $p$ again. This implies that $(N+1)+a-2 \ge p$, i.e. $N \ge p-a+1$. 
 \end{proof}
  \end{theorem}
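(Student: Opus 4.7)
The plan is to prove by induction on $a$, going backwards from $a=p$ down to $a=1$, that $H_1^{p-a+1}$ divides $\theta_1^{p-a+1}(f)$ for any local section $f$ of $\LL(k,l)$ on a small open $V \subset \flag$; this is precisely what is needed to form the quotient $\theta^1_{(k,l)}$. I shrink $V$ to lie inside $\pi^{-1}(W)$ for $W \subset \Shbar$ trivializing $\omega$, and note that if $V \subset U_1$ there is nothing to check since $H_1$ is invertible there, so I may assume $V$ meets the non-ordinary locus.

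The base case $a=p$ is immediate from \Cref{divisibility-properties}: either $H_1 \mid f$ (in which case $H_1 \mid \theta_1(f)$ by Leibniz and $\theta_1(H_1)=0$), or $H_1 \nmid f$ and the first coordinate $k=pb+p$ is divisible by $p$, so $H_1 \mid \theta_1(f)$. For the inductive step, if $H_1 \mid f$, write $f = H_1 f_0$; then $f_0$ has first weight coordinate $k-(p-1)$, so $a(f_0) = a+1$, and the claim for $f$ follows from the inductive hypothesis applied to $f_0$ together with $\theta_1^m(H_1 f_0) = H_1 \theta_1^m(f_0)$. So it suffices to treat the case $H_1 \nmid f$.

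In this case, by iterating \Cref{divisibility-properties}, the first weight coordinate of $\theta_1^i(f)$ is $k + i(p+1) \equiv a + i \pmod p$, which first becomes divisible by $p$ when $i = p - a$; hence $H_1 \nmid \theta_1^i(f)$ for $0 \le i \le p-a$ and a factor of $H_1$ first appears at $i = p - a + 1$. Let $N \ge 1$ be the exact power of $H_1$ dividing $\theta_1^{p-a+1}(f)$; I need $N \ge p - a + 1$. I expect this estimate to be the main obstacle, and the key bootstrap uses $\theta_1^p = H_1^p \theta_3$ from \Cref{relations-U1}. We may assume $N \le p - 1$, else the bound is trivial; set $h = \theta_1^{p-a+1}(f)/H_1^N$, which satisfies $H_1 \nmid h$, and a weight computation gives $a(h) = N + 1$. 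Applying $\theta_1$ a further $a-1$ times and using $\theta_1(H_1 \cdot -) = H_1 \theta_1(-)$, one has
\begin{equation*}
H_1^N \theta_1^{a-1}(h) = \theta_1^p(f) = H_1^p \theta_3(f),
\end{equation*}
so $H_1^{p-N} \mid \theta_1^{a-1}(h)$, with $p - N \ge 1$. For any factor of $H_1$ to appear in $\theta_1^{a-1}(h)$, \Cref{divisibility-properties} forces the first weight coordinate of some $\theta_1^i(h)$, $0 \le i \le a-2$, to be divisible by $p$; since these coordinates are congruent to $N + 1 + i$ modulo $p$, this requires $N + 1 + (a-2) \ge p$, i.e. $N \ge p - a + 1$, closing the induction.

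The Hecke equivariance away from $p$ and the extension to (sub)canonical extensions on $\flag^\tor$ are inherited from the corresponding properties of $\theta_1$ and $H_1$: $H_1$ is a non-zero-divisor on all sheaves involved, its vanishing divisor $D_1^\tor$ meets the boundary in codimension at least one, and division by $H_1$ commutes with Hecke correspondences and with the formation of (sub)canonical extensions.
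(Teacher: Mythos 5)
Your proof follows the paper's argument almost verbatim: the same downward induction on $a$, the same base case, the same reduction to $H_1 \nmid f$, the same use of \Cref{divisibility-properties} to locate the first appearance of $H_1$ at step $p-a+1$, and the same bootstrap via $\theta_1^p = H_1^p\theta_3$ to force $N \ge p-a+1$. The only differences are cosmetic — you spell out the modular arithmetic for the first weight coordinates a bit more explicitly, and you add a sentence or two of justification for the extension to $\flag^\tor$ where the paper simply asserts it.
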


 We will
 prove that for generic $p$-restricted weights the map
 is injective on global sections in \Cref{geometric-entailment}.

\subsection{$\pi_* \theta_{1,2,3}$ as maps on the Shimura variety.}

Here we describe $\pi_* \theta_{1,2,3}$ without referring to the flag Shimura variety. 
First we recall the theta operator on $\Shbar$ already defined in the literature.
On $\Shbar^{\text{ord}}$ we have the projection $\pi_\omega
: H \to \omega$, which can be extended to a map  $\Pi_{\omega}: H \to \omega \otimes \det^{p-1} \omega$
on $\Shbar$ by multiplying 
by the Hasse invariant. We use $H(k,l)$ to denote the sheaf $\Sym^{k-l} H \otimes (\bigwedge^2 H)^{\otimes l}$
over $\Shbar$, the Gauss-Manin connection $\nabla$ extends to it. The Hodge filtration on $H$
defines a decreasing filtration, and by Griffiths transversality  $\nabla$ on $H(k,l)$ factors through the first non-trivial
step of the filtration $F^1 H(k,l)$. Then $\Pi_{\omega}$ extends to a map $\Pi: F^1 H(k,l) \to \omega(k,l) \otimes \det^{p-1} \omega$
such that on a simple tensor one only applies $\Pi_{\omega}$ to the component which doesn't lie in $\omega$ if there is one,
otherwise $\Pi$ acts as multiplication by $H_1$. 
\begin{defn}
Let $(k,l) \in X^*(T)$ be a $M$-dominant weight. 
Define the differential operator $\theta_{\Shbar}: \omega(k,l) \to \omega(k+p-1,l+p-1) \otimes \Sym^2 \omega$
as the composite of the following maps
$$
\theta_{\Shbar}\coloneqq \omega(k,l) \xrightarrow{\nabla} F^1 H(k,l) \otimes \Omega^1_{\Shbar/\F_p} \xrightarrow{\Pi \otimes \text{ks}^{-1}}
\omega(k+p-1,l+p-1) \otimes \Sym^2 \omega.
$$
For $(k,l)=(0,0)$ by convention $\nabla$ becomes $d: \mathcal{O}_{\Shbar} \to \Omega^1_{\Shbar}$.
\end{defn}

Recall the $3$ projections from $\Sym^2\omega $ on $\flag$ in \eqref{projections-from-KS}.
Similarly, we can also define $3$ projections from $\Sym^2 \omega$ over $\Shbar$:
\begin{align*}
\Pi_{\Shbar,1}: & \Sym^2 \omega \xrightarrow{\text{id}} \Sym^2 \omega \\
\Pi_{\Shbar,2}: & \Sym^2 \omega \to \omega^{\otimes 2} \xrightarrow{\text{id} \otimes V} \omega \otimes \omega^{(p)}
\to \omega \otimes \Sym^p \omega \to \Sym^{p+1} \omega \\
\Pi_{\Shbar,3}: & \Sym^2 \omega \xrightarrow{\Sym^2 V} \Sym^2 \omega^{(p)} \to \Sym^{2p} \omega,
\end{align*}
where we are using the natural embedding $\omega^{(p)} \hookrightarrow \Sym^p \omega$. 

\begin{lemma} \label{identifications-projections}
Under the identifications $\pi_* \LL(k,l)=\omega(k,l)$, we have $\pi_* \Pi_i= \Pi_{\Shbar,i}$. 
Moreover, the pushforward of 
$$
\LL(k,l) \otimes \Sym^2 \omega \xrightarrow{\text{id} \otimes \Pi_i}
\LL(k+k_i,l+l_i)
$$
is 
$$
\omega(k,l) \otimes \Sym^2 \omega \xrightarrow{\text{id} \otimes \Pi_{\Shbar,i}}
\omega(k,l) \otimes \omega(k_i,l_i) \xrightarrow{r} \omega(k+k_i,l+l_i),
$$ where 
$r$ comes from the canonical projection $\Sym^n \otimes \Sym^m \to \Sym^{n+m}$.
\end{lemma}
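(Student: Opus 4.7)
The plan is to reduce both statements to one core computation: the pushforward of the tautological geometry on $\flag=\underline{\textnormal{Proj}}(\Sym^\bullet\omega)$. Writing $\omega/\LL=\LL(1,0)$ as the ``$\mathcal{O}(1)$'' of the projective bundle, the tautological surjection $\pi^*\omega\twoheadrightarrow\omega/\LL$ corresponds under the $(\pi^*,\pi_*)$ adjunction to the identity $\omega\to\omega$. Since $\omega/\LL$ is a line bundle, symmetric and tensor powers coincide, and the pushforward of the induced map $\pi^*\omega^{\otimes k}\to(\omega/\LL)^{\otimes k}$ is the canonical symmetrization $\omega^{\otimes k}\to\Sym^k\omega$; this is the $l=0$ case of \Cref{basic-lemma-flag} together with the standard computation of $\pi_*\mathcal{O}(k)$ on a projective bundle.

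With this identification in hand I would handle each $\Pi_i$ separately. For $\Pi_1=\Sym^2\pi_{(1,0)}$ the case $k=2$ above immediately gives $\pi_*\Pi_1=\mathrm{id}_{\Sym^2\omega}=\Pi_{\Shbar,1}$. For $\Pi_3=\Sym^2\pi_{(0,1)}$ I would factor $\pi_{(0,1)}:\pi^*\omega\xrightarrow{\pi^*V}\pi^*\omega^{(p)}\to(\omega/\LL)^{(p)}=(\omega/\LL)^{\otimes p}$; the first arrow is pulled back from $\Shbar$ and passes through $\pi_*$ as $V$, while taking $\Sym^2$ of the second arrow and pushing forward (using the core identification with $k=2p$) produces the composition $\Sym^2\omega^{(p)}\to\Sym^2\Sym^p\omega\to\Sym^{2p}\omega$. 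Precomposing with $\Sym^2 V$ yields exactly $\Pi_{\Shbar,3}$. The case $\Pi_2$ is a hybrid using one $\pi_{(1,0)}$ factor and one $\pi_{(0,1)}$ factor and follows by the same method.

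For the second claim the projection formula does the rest: since $\Sym^2\omega$ is pulled back from $\Shbar$, $\pi_*(\LL(k,l)\otimes\pi^*\Sym^2\omega)=\omega(k,l)\otimes\Sym^2\omega$, and the target $\LL(k+k_i,l+l_i)=\LL(k,l)\otimes\LL(k_i,l_i)$ pushes forward to $\omega(k+k_i,l+l_i)$. Applying $\pi_*$ to the multiplication $\LL(k,l)\otimes\LL(k_i,l_i)\to\LL(k+k_i,l+l_i)$ yields the canonical map $r:\omega(k,l)\otimes\omega(k_i,l_i)\to\omega(k+k_i,l+l_i)$ coming from symmetrization $\Sym^{k-l}\omega\otimes\Sym^{k_i-l_i}\omega\to\Sym^{(k-l)+(k_i-l_i)}\omega$, by the same universal/local argument as before. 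Combining with the first part produces the advertised factorization of the pushforward of $\mathrm{id}\otimes\Pi_i$.

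The main subtlety is tracking Frobenius twists in $\Pi_2$ and $\Pi_3$: one must verify that the pushforward of the map $\pi^*\omega^{(p)}\to(\omega/\LL)^{(p)}$ on $\flag$ is precisely the canonical map $\omega^{(p)}\hookrightarrow\Sym^p\omega$ that appears in $\Pi_{\Shbar,i}$. This is the $k=p$ instance of the core identification, and if one wanted to double-check it is easily confirmed by a local computation on $\Ig$, where a canonical basis of $\omega$ makes both $V$ and the tautological quotient explicit.
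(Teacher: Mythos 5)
Your proposal is correct and takes essentially the same route as the paper: both reduce to a direct local computation on the $\mathbb{P}^1$-bundle (the paper phrases this as checking the multiplication map $\LL(k,l)\otimes\Sym^n\omega\to\LL(k+n,l)$ and the Frobenius inclusion $\omega^{(p)}\to\LL(p,0)$ on local coordinates, you phrase it as the adjunction/core identification for the tautological quotient), and both then feed $\pi_*V = V$ via the projection formula into the cases $i=2,3$. The one point worth flagging is that your ``$k=p$ instance'' step is not literally a specialization of the tensor-power computation — the map $\pi^*\omega^{(p)}\to(\omega/\LL)^{(p)}$ is the Frobenius twist of the tautological quotient, not its $p$-th tensor power — so one must either invoke the commutativity of the square relating $\omega^{(p)}\hookrightarrow\Sym^p\omega$ to the $\Sym^p$ of the tautological quotient, or (as you note and as the paper does) just check it in coordinates; with that caveat acknowledged, the argument is complete.
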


\begin{proof}
 For $i=1$ the property that we need is that the map $\LL(k,l) \otimes \Sym^n \omega \to \LL(k+n,l)$
 corresponds to $r$ under $\pi_*$. This can be checked on local coordinates, reducing to the case of 
 $\mathbb{P}^1$. We also have that $\omega^{(p)} \to \LL(p,0)$ corresponds to $\omega^{(p)} \to 
 \Sym^p \omega$, also by reducing to the case of $\mathbb{P}^1$. The statement for $i=2,3$
 follows by combining these two properties and the fact that $\pi_*(V: \omega \to \omega^{(p)})$ is again
 the Verschiebung on $\Shbar$
 by the projection 
 formula. 
\end{proof}

We can use these to describe $\pi_* \theta_{1,2,3}$ as maps on $\Shbar$.

\begin{prop} \label{theta-on-X}
Let $(k,l) \in X^{*}(T)$ $M$-dominant. We define $3$ different theta operators as maps of sheaves on $\Shbar$:
\begin{align*}
\theta_{\Shbar,1}:&=r \circ \Pi_{\Shbar,1} \circ \theta_{\Shbar} : \omega(k,l) \to  \omega(k+p+1,l+p-1) \\
\theta_{\Shbar,2}:&=r \circ \Pi_{\Shbar,2} \circ \theta_{\Shbar}: \omega(k,l) \to \omega(k+2p,l+p-1) \\
\theta_{\Shbar,3}:&=r \circ \Pi_{\Shbar,3} \circ \theta_{\Shbar}: \omega(k,l) \to \omega(k+3p-1,l+p-1),
\end{align*}
where $r$ is the natural map  $\Sym^n \otimes \Sym^m \to \Sym^{n+m}$. Then 
$\pi_* H_1 \Theta_i=\theta_{\Shbar,i}$. 
\begin{proof}
By a Leibniz rule type of argument we can reduce to weights $(1,0)$, $(1,1)$ and $(0,0)$. 
\Cref{extend-s} shows that $\pi_{*} \theta=\theta_{\Shbar}$, which combined 
with \Cref{identifications-projections} proves it for weight $(1,0)$. For $(1,1)$ it follows similarly from
\Cref{extend-s} by following the recipe for $H_1 \Theta_i$ in the case 
$\text{det } \omega= \LL \otimes \omega/\LL$. For $(0,0)$ it follows from the diagram in the proof of \Cref{extend-s}.
\end{proof}
\end{prop}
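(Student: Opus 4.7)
The plan is to reduce the identity $\pi_* H_1 \Theta_i = \theta_{\Shbar,i}$ to the two generating cases $(k,l) = (1,0)$ and $(1,1)$, and then assemble the result from \Cref{extend-s} (which identifies the common ``derivation piece'' $\pi_*\theta$ with $\theta_{\Shbar}$) and \Cref{identifications-projections} (which identifies the ``projection piece'' $\pi_*\Pi_i$ with $\Pi_{\Shbar,i}$, along with the pushforward of the induced multiplication maps on $\LL(k,l)\otimes\Sym^2\omega$).

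First I would note that both $\pi_* H_1 \Theta_i$ and $\theta_{\Shbar,i}$ are first-order differential operators that satisfy a Leibniz rule with respect to tensor products of automorphic bundles: on the flag side this is built into the definition of $\Theta_i$ in \Cref{operators-on-U}, and on $\Shbar$ it is inherited from the Gauss-Manin connection together with the tensor functoriality of the Hodge filtration, Kodaira-Spencer, and the symmetric multiplications. Since every $M$-dominant bundle $\omega(k,l) = \Sym^{k-l}\omega \otimes \det^l\omega$ is built from $\omega = \omega(1,0)$ and $\det\omega = \omega(1,1)$ via symmetric powers and tensor products, and since $\pi_*$ interacts well with these operations by \Cref{identifications-projections}, it suffices to verify the equality on $\LL(1,0)$ and $\LL(1,1)$.

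For $\LL(1,0) = \omega/\LL$, factor $H_1 \Theta_i = (\text{id}\otimes\Pi_i)\circ H_1\theta_{\omega/\LL}$. Applying $\pi_*$ to the first factor and invoking \Cref{extend-s} identifies it with $\theta_{\Shbar}$ restricted to $\omega \subset F^1 H$, while applying $\pi_*$ to the second factor and invoking \Cref{identifications-projections} replaces it with $r\circ(\text{id}\otimes\Pi_{\Shbar,i})$; composing these is precisely the definition of $\theta_{\Shbar,i}$ on $\omega = \omega(1,0)$. For $\LL(1,1) = \LL\otimes(\omega/\LL)$ one uses the Leibniz rule to reduce to the analogous identities on $\LL$ and $\omega/\LL$; on $\LL$ the same argument applies because $\theta$ of \Cref{extend-s} preserves $\LL$ by \Cref{magic-on-U1}, so its pushforward lands in the piece of $\theta_{\Shbar}$ that corresponds to a simple tensor already contained in $\omega$.

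The main obstacle, though largely bookkeeping, is making the Leibniz reduction rigorous: one must check that $\pi_*$ of a first-order operator defined by Leibniz on a tensor product $\LL(\lambda)\otimes\LL(\mu)$ is itself the Leibniz-built operator on the pushforwards $\omega(\lambda)\otimes\omega(\mu)\to\omega(\lambda+\mu)$, which rests on the projection formula together with the compatibility (\Cref{identifications-projections}) of the multiplications $\Sym^n\omega \otimes \Sym^m\omega \to \Sym^{n+m}\omega$ with the corresponding flag-side multiplications $\LL(n,0)\otimes\LL(m,0)\to\LL(n+m,0)$. With this in place the identity on the two generating weights propagates to arbitrary $M$-dominant $(k,l)$, and the proposition follows.
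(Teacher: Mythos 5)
Your proposal is correct and follows essentially the same route as the paper: reduce to the generating weights $(1,0)$ and $(1,1)$ by a Leibniz argument, identify the derivation piece via \Cref{extend-s} ($\pi_*\theta = \theta_{\Shbar}$), and identify the projection piece via \Cref{identifications-projections}, using $\det\omega = \LL\otimes(\omega/\LL)$ together with \Cref{magic-on-U1} for the $(1,1)$ case. The only thing to phrase more carefully is that one does not push forward the operator on $\LL$ by itself (since $\pi_*\LL = 0$), but rather pushes forward the Leibniz-assembled operator on $\LL(1,1)$, which makes sense because $\LL(1,1)=\det\omega$ is a pullback from $\Shbar$ — a detail the paper also elides.
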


The operator $\theta_{\Shbar,1}$ is the theta operator already considered by \cite{Eischen-Mantovan-1} in 
much greater generality,
and by \cite{Yamauchi-1} (denoted by $\theta_3$ in his paper) for $\gsp_4$. 
Since $\Theta_3$ already extends to $\flag$ we see that $H_1 \mid \theta_{\Shbar,3}$. One could have 
proved this via a local expression directly on $\Shbar$.

With this description we can prove the following important result.
\begin{theorem} \label{geometric-entailment}
  Let $(k,l) \in X^*(T)$ such that $p-1 \ge k \ge l \ge 0$, then
  $\Ker \pi_*\theta^1_{(k,l)}=\Ker \pi_* \theta_1$. If furthermore $(k,l) \neq (p-1,p-1)$ 
  $$
  \theta^1_{(k,l)}: \H^0(\Shbar,\omega(k,l)) \to \H^0(\Shbar,\omega(2p-k+2,l)) 
  $$
  is injective. The map $\theta^1_{(k,l)}$ is still injective for $(k,l)=(p-1,p-1)$ after localizing
   at a non-Eisenstein maximal ideal.
  \begin{proof}
  We have an inclusion $\Ker \pi_* \theta_1 \subset \Ker \pi_*\theta^1_{(k,l)}$ since 
  $H^{p-k+1}_1 \theta^1_{(k,l)}=\theta^{p-k+1}_1$ and $H_1$ is injective. Also, $\Ker \pi_*\theta^1_{(k,l)}
  \subset \Ker \pi_* \theta_3$ since $\theta_3=\theta^1_{(2p-k+2,l)} \circ \theta^1_{(k,l)}$.
   We prove first that whenever $0\le k-l\le p-1$
   the map $f:\omega(k,l) \otimes \Sym^2 \omega \xrightarrow{\text{id} \otimes \Pi_{\Shbar,3}}
  \omega(k,l) \otimes \Sym^{2p}\omega \to \omega(2p+k,l)$ 
  is injective as a map of sheaves.
  It is enough to do this on $\Shbar^{\text{ord}}$, which is open dense, and 
  one can further pass to the finite \'etale cover $\Ig$. The canonical basis $\{e_1,e_2\}$ of $\omega$
  satisfies $Ve_i=e^{(p)}_i$, so $f$ is identified with the map 
  $\Sym^{k-l} \mathcal{O}^2 \otimes \Sym^2 \mathcal{O}^2 \to \Sym^{2p+k-l} \mathcal{O}^2$
  sending basis elements $e^n_1 e^{k-l-n}_2 \otimes e_i e_j$ to $e^n_1 e^{k-l-n}_2e^p_i e^p_j$,
   which is clearly injective when $k-l \le p-1$. Therefore $\Ker \pi_*\theta_3=\Ker \theta_{\Shbar,3}=
   \Ker(\theta_{\Shbar}: \omega(k,l) \to \omega(k+p-1,l+p-1) \otimes \Sym^2 \omega)$. Since the map 
   $\theta_{\Shbar}$ is a composition 
   factor of $\pi_* \theta_1=\theta_{\Shbar,1}$ we have $\Ker \pi_* \theta_3 \subset \Ker \pi_*\theta_1$, so that
   for $0\le k-l\le p-1$
   $$
   \Ker \pi_*\theta^1_{(k,l)}=\Ker \pi_* \theta_1.
   $$
  Thus, to prove injectivity on $\H^0$ it is enough to prove that $\theta_1$ is injective on global sections.
  Assume first $(k,l) \neq (p-1,p-1)$.
  Let $f \in \H^0(\Shbar,\omega(k,l))$ non-zero. It can't be divisible by $H_1$ since 
  $\H^0(\Shbar, \omega(k-p+1,l-p+1))=0$ 
  by \Cref{lan-suh}$(2)$. 
  Therefore, $\theta_1(f) \neq 0$ by 
  \Cref{divisibility-properties}. For $(k,l)=(p-1,p-1)$, note that $\H^0(\Shbar,\mathcal{O}_{\Shbar})$ 
   only has Eisenstein Hecke eigensystems (the trivial one),
  and $H_1$ is also Eisenstein, so by the same argument
   $\theta^1_{(k,l)}$ is injective after localizing at a non-Eisenstein maximal ideal
  $\mathfrak{m}$.
  \end{proof}
  \end{theorem}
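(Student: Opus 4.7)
My plan is to prove the theorem in two stages: first upgrade the injectivity of $\theta^1_{(k,l)}$ to a sheaf-theoretic identification of kernels, and then deduce injectivity on global sections from the mod-$p$ cone vanishing theorems.

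The first stage will establish the chain of inclusions $\Ker \pi_*\theta_1 \subseteq \Ker \pi_*\theta^1_{(k,l)} \subseteq \Ker \pi_*\theta_3$ together with the reverse inclusion $\Ker \pi_*\theta_3 \subseteq \Ker \pi_*\theta_1$ under the hypothesis $0 \le k-l \le p-1$. The two easy inclusions follow from the defining relation $\theta_1^{p-k+1} = H_1^{p-k+1}\,\theta^1_{(k,l)}$ (with $H_1$ a nonzero divisor) and from the factorization $\theta_3 = \theta^1_{(2p-k+2,l)} \circ \theta^1_{(k,l)}$ obtained by re-bracketing the identity $\theta_1^p = H_1^p\,\theta_3$.

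The content is the reverse inclusion, which I will prove using \Cref{theta-on-X}: both $\pi_*\theta_1 = \theta_{\Shbar,1}$ and $\pi_*\theta_3 = \theta_{\Shbar,3}$ factor through the common map $\theta_{\Shbar}: \omega(k,l) \to \omega(k+p-1,l+p-1)\otimes\Sym^2\omega$, so it suffices to show that
$$
\omega(k+p-1,l+p-1)\otimes\Sym^2\omega \xrightarrow{\mathrm{id}\otimes r\circ\Pi_{\Shbar,3}} \omega(k+3p-1,l+p-1)
$$
is an injection of sheaves; this yields $\Ker\theta_{\Shbar,3} = \Ker\theta_{\Shbar}\subseteq\Ker\theta_{\Shbar,1}$. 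I will verify the injectivity on the dense ordinary locus by pulling back to the \'etale cover $\Ig$, where the canonical basis $\{e_1,e_2\}$ of $\omega$ satisfies $Ve_i = e_i^{(p)}$. In this basis the map takes the form $P(e_1,e_2)\otimes Q(e_1,e_2) \mapsto P(e_1,e_2)\,Q(e_1^p,e_2^p)$ in $\Sym^{k-l+2p}\omega$, and the three summands indexed by the monomials of $Q$ have $e_1$-degree ranges $[0,k-l]$, $[p,k-l+p]$, $[2p,k-l+2p]$, which are pairwise disjoint precisely when $k-l < p$. This settles part (1).

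For the global injectivity of $\theta^1_{(k,l)}$, part (1) reduces the problem to showing that $\theta_1$ is injective on $H^0(\Shbar,\omega(k,l))$. Given a nonzero $f$, I will first rule out $H_1 \mid f$: a factorization $f = H_1 g$ would place $g$ in $H^0(\Shbar,\omega(k-p+1,l-p+1))$, and the cone inequality $k+lp\ge 0$ of \Cref{lan-suh}(2) forces this group to vanish throughout the range $p-1\ge k\ge l\ge 0$, with the cone saturated only at $(k,l)=(p-1,p-1)$. With $H_1\nmid f$ and $1 \le k \le p-1$ (so $p\nmid k$), \Cref{divisibility-properties} applied contrapositively rules out $\theta_1(f)=0$, since a vanishing $\theta_1(f)$ would trivially be divisible by $H_1$. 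In the exceptional case $(k,l)=(p-1,p-1)$ the obstruction to cone vanishing is exactly the Hasse invariant $H_1$, which generates an Eisenstein Hecke eigensystem, so after localizing at a non-Eisenstein maximal ideal every nonzero section satisfies $H_1\nmid f$ and the previous argument applies. The main obstacle is the multidegree injectivity in the third paragraph, which is what pins down the exact range $k-l\le p-1$; the remaining steps are a clean combination of the established theta factorizations with the mod-$p$ cone vanishing.
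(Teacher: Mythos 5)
Your proof is correct and follows essentially the same route as the paper: the two easy inclusions come from the identities $\theta_1^{p-k+1}=H_1^{p-k+1}\theta^1_{(k,l)}$ and $\theta_3=\theta^1_{(2p-k+2,l)}\circ\theta^1_{(k,l)}$, the reverse inclusion is proved by showing the symmetric-power projection $\Pi_{\Shbar,3}$ is injective by a Frobenius-linear monomial count over $\Ig$, and global injectivity follows from the cone vanishing of \Cref{lan-suh}(2) combined with the Hasse-divisibility criterion of \Cref{divisibility-properties}, with the $(p-1,p-1)$ case handled by localizing at a non-Eisenstein maximal ideal. One small notational slip: you write $\pi_*\theta_3=\theta_{\Shbar,3}$, but since $\theta_3=\Theta_3$ while $\theta_{\Shbar,i}=\pi_*(H_1\Theta_i)$, the correct relation is $H_1\cdot\pi_*\theta_3=\theta_{\Shbar,3}$; as $H_1$ is a nonzerodivisor this changes nothing about the kernel identification $\Ker\pi_*\theta_3=\Ker\theta_{\Shbar,3}$, so the argument is unaffected.
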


  The proof of \Cref{geometric-entailment} also shows that $\theta^1_{(k,l)}$
  is injective on global sections for generic $p$-restricted weights $(k,l)$.

\section{The entailment of Serre weights} \label{Section5}

\subsection{Generalities about the weight part of Serre's conjecture}
We briefly recall the weight part of Serre's conjecture for $\gsp_4/\Q$ and how its combinatorics 
relates to the weight shifting of our operators. Let $\mathfrak{m} \subseteq \mathbb{T}$ be a mod $p$ 
Hecke eigensystem and $\overline{r}_{\mathfrak{m}}: G_{\Q} \to \gsp_4(\overline{\F}_p)$ its associated 
mod $p$ Galois representation. Let $\overline{\rho}_{\mathfrak{m}}: G_{\Q_p} \to \gsp_4(\Fpbar)$ be its restriction 
to the decomposition group at $p$.
We define the set of modular 
Serre weights as
$$
W(\overline{\rho}_{\mathfrak{m}}):=\{\sigma: \H^*_{\et}(\Sh_{\overline{\Q}_p},\sigma)_{\mathfrak{m}} \neq 0 \},
$$ 
where $\sigma$ runs along all Serre weights. 
As the notation indicates, conjecturally this only depends on $\overline{\rho}_{\mathfrak{m}}$.
We will often write $W(\overline{\rho})$ for $W(\overline{\rho}_{\mathfrak{m}})$, as it is customary in the literature. 
 We explain Herzig's \cite{Herzig1}
conjectural recipe for 
$W(\overline{\rho})$ in the case when $\overline{\rho}$ is semisimple, adapted to the case of $\gsp_4/\Q$ 
in \cite{Herzig-Tilouine}, and now proved in the latter case under 
some Taylor-Wiles and genericity conditions 
in \cite{heejong-lee}.\footnote{In \cite{heejong-lee} they use a different global setting, 
but their results can be translated to our setting using the vanishing results of \cite{Hamann-Lee}.} In general one expects that
$W(\overline{\rho}) \subseteq W(\overline{\rho}^{\text{ss}})$, so that the latter would exhaust all possible Serre weights. 
Given $\overline{\rho}: G_{\Q_p} \to \gsp_4(\overline{\F}_p)$ we can
write the semisimplification of the restriction to inertia $\tau\coloneqq \overline{\rho}^{\text{ss}}_{\mid I_p}$ as
$$
\tau=\tau(\mu, w)\coloneqq(\overline{\mu}+pw^{-1}\overline{\mu}+ \ldots 
p^{t-1}w^{1-t}\overline{\mu})\omega_{t}
$$
where $w \in W$, $t \in \{1,2,3,4\}$ is its order, $\omega_t$  is the fundamental 
character of niveau $t$, $\mu=(a,b,c) \in X^{*}(T)$, and $\overline{\mu}=(\frac{a+b+c}{2},
\frac{a-b+c}{2},c) \in X_{*}(\check{T})$. We are fixing
the isomorphism of $\gsp_4$ with its dual group which on Weyl groups we denote by $w \to \check{w}$, that switches the actions 
of $s_0$ and $s_1$. We then have $\check{w} \tau \sim \tau^p$. 
Then any semisimple $\tau: I_{\Q_p} \to \gsp_4(\overline{\F}_p)$ that lifts 
to $G_{\Q_p}$ (a tame inertial parameter)
can always be written as $\tau(\mu,w)$, and for generic $\tau$ we can assume $\mu \in X_1(T)$.
Let $\delta >0$, we say that a weight $\lambda \in X^*(T)$ is $\delta$-generic if 
 $\lvert \langle \lambda, \gamma^{\vee} \rangle -pn \rvert<\delta$ for each $\gamma \in \Phi$ 
 and some $n \in \Z$ depending on 
 $\gamma$. A tame inertial parameter $\tau=\tau(\mu+\rho,w)$ is $\delta$-generic if $\mu$ is.
We say that a statement holds for sufficiently generic weights/tame parameters if there exists some 
$\delta$ independent of $p$
such that the statement holds for all $\delta$-generic weights/tame parameters. 
The set of \textit{regular} weights $X_{\text{reg}}(T) \subset X_1(T)$ consists of all $(a,b) \in X_1(T)$
such that $a-b,b<p-1$.
Based on the reduction of the Deligne-Lusztig representation associated to $\tau$ Herzig 
constructs a set $W^{?}(\overline{\rho})$ of Serre weights, and conjectures that it should equal
the set of regular Serre weights when $\overline{\rho}$
is semisimple. 
Restricting to some set of sufficiently generic weights (or to sufficiently generic tame parameters)
$W^{?}(\overline{\rho})$
can also be described as
\begin{align*}
  W^{?}(\overline{\rho})=\{ F(\mu) \text{ with }\mu  \in X_{\text{reg}}(T) \text{ such that  } & \exists \; \mu' \uparrow \mu \text{ with } \mu'+\rho 
  \text{ dominant and} \\
  & w \in W,
   \text{ such that }
  \overline{\rho}^{\text{ss}}_{\mid I}
  =\tau(\mu'+\rho,w)\}
  \end{align*}
\cite[Prop 6.28]{Herzig1}. For $\overline{\rho}$ ordinary this description holds for all regular weights 
and in general $2$-generic is enough \cite[Prop 4.11]{Herzig-Tilouine}.
In the general case
$W^{?}(\overline{\rho})$ contains 
$20$ Serre weights, which 
can be classified into $8$ \textit{obvious} weights and $12$ \textit{shadow} weights, using the terminology of 
\cite{Gee-Herzig-Savitt}. The $8$ obvious 
weights correspond to taking $\mu'=\mu$ in the description of $W^{?}(\overline{\rho})$,
and the $12$ remaining shadow weights correspond to the cases where $\mu' \neq \mu$. 
The obvious weights match up with the $8$ "obvious" crystalline lifts \footnote{The lift only has to agree with $\overline{\rho}$
restricted to inertia.} of $\overline{\rho}$ with Hodge-Tate weights 
prescribed by
$\mu+\rho$, obtained by applying an element of $W$ to $\overline{\rho}$ and then using the congruences of the fundamental 
characters. We will see that the weights appearing in the generic entailment are shadow weights 
corresponding to a lower alcove weight.

\subsection{The generic entailment of Serre weights}

We first explain how to translate between coherent and \'etale cohomology. For $V \in \Rep_{\overline{\F}_p}G$ there is 
an associated $\gsp_4(\F_p)$-local system on $\Sh_{\Q_p}$ with the same name, by composing the local system
 $A[p]$ with $V_{\mid G(\F_p)}$. 
Generically for $p$-restricted weights we can compare the Hecke eigensystems appearing in
coherent and \'etale cohomology, but only when the coefficients for \'etale cohomology are dual Weyl modules. 
Recall that the spherical Hecke algebra
$\mathbb{T}/\Z_p$ acts both on coherent and \'etale cohomology, and we say that 
a maximal ideal $\mathfrak{m} \subset \mathbb{T}$ appearing in either cohomology is non-Eisenstein 
if the associated residual Galois representation is irreducible as a $\GL_4$ valued 
representation. 
We will also say that $\mathfrak{m}$ is generic if there exists an auxiliary prime $l \neq p$ such that the restriction of
$\mathfrak{m}$ to the Hecke algebra at $l$ is generic in the sense of \cite[Def 1.1]{Hamann-Lee}.
\begin{prop} \label{coherent-to-betti}
Let $\lambda=(a,b) \in X^*(T)$ be a dominant weight. Let $\mathfrak{m} \subset \mathbb{T}$ be a non-Eisenstein generic 
maximal ideal. Then
$$
\H^3_{\text{\'et}}(\Sh_{\overline{\Q}_p},V(a,b)_{\overline{\F}_p})_{\mathfrak{m}}=\H^3_{\text{\'et},c}(\Sh_{\overline{\Q}_p},V(a,b)_{\overline{\F}_p})_{\mathfrak{m}},
$$
$$
\H^{1}(\Sh^{\tor}_{\overline{\F}_p},\omega^{\can}(a+3,b+3))_{\mathfrak{m}}=\H^{1}(\Sh^{\tor}_{\overline{\F}_p},\omega^{\sub}(a+3,b+3))_{\mathfrak{m}},
$$
and
$$
\H^3_{\text{\'et}}(\Sh_{\Q_p},V(a,b) \otimes \overline{\F}_p)_{\mathfrak{m}} \neq 0  \implies 
\H^0(\Sh^{\tor}_{\overline{\F}_p},\omega^{\can}(a+3,b+3))_{\mathfrak{m}} \neq 0.
$$
Moreover, $\H^{*}_{\text{\'et}}(\Sh_{\Q_p},V(a,b)_{\overline{\F}_p})_{\mathfrak{m}}$ is concentrated 
in degree $3$.
Further, for $p \ge 5$ if $\lambda=(a,b) \in X_1(T)$ satisfies $ b \ge 3$,
or $b=2$ and $a \le p-2$, or $b=1$ and $a \le p-3$, then the reverse implication 
to the one above is also true. 
\begin{proof}
   The first equality follows from a similar (but simpler) argument to the one in \cite[Thm 4.2]{Newton-Thorne-CM}.
   To adapt their method 
  one needs to prove that the cohomology of the boundary of the Borel-Serre compactification 
  of $\Sh_{K}$ is Eisenstein. 
  The boundary is stratified by locally symmetric spaces associated to rational parabolics of $G$, whose Levi quotients 
  are respectively a split torus $T$ (Borel), $\GL_2 \times \mathbb{G}_m$ (Siegel), $\GL_2 \times \mathbb{G}^2_m$ (Klingen).
  One can attach mod $p$ Galois representations to mod $p$ eigensystems appearing in the cohomology of 
  the locally symmetric spaces for these Levi subgroups. In fact, in our case these locally symmetric spaces 
  are essentially the modular curve, where the construction of mod $p$ Galois representations is a classical result.  
  That way, as in the proof of \cite[Thm 4.2]{Newton-Thorne-CM}
  one can show that the Galois representation attached to 
  the Hecke eigenclasses appearing in the boundary are always a direct sum of 
  Galois representations attached to one of the proper rational Levis. Therefore, as a $\GL_4$-valued 
  representation $\overline{r}_{\mathfrak{m}}$ 
  is respectively the 
  sum of $4$ characters (Borel), the sum of two $2$-dimensional representations (Siegel),
  or the sum of $2$ characters and a $2$-dimensional representation (Klingen).  
  The second equality is proved by mimicking the proof of \cite[Thm 24.iii)]{Atanasov-Harris}, 
  which concerns unitary Shimura varieties. 
  Namely, the boundary of $\Shbar^{\tor}$ is also stratified by components labelled by rational parabolics.
  The Galois representations attached to these components are then summands of
  $\overline{r}_{\mathfrak{m}}$. We also note that
  as in the unitary case $\gsp_4$ is self-dual, so that their proof carries over to our setup. 

For the first implication about the existence of nontrivial eigensystems, 
  the vanishing theorem of \cite{Hamann-Lee} and the first equality imply that 
  $\H^{*}_{\et}(\Sh_{\overline{\Q}_p},V(a,b)_{\overline{\F}_p})_{\mathfrak{m}}$ is concentrated in degree $3$. 
  This means that we can lift any non-trivial mod $p$ eigensystem to 
  $\H^{3}_{\et}(\Sh_{\overline{\Q}_p},V(a,b)_{\check{\Z}_p})_{\mathfrak{m}}$. The latter is also torsion-free,
  so that there exists an ideal $\tilde{\mathfrak{m}} \subset \mathbb{T}$ corresponding 
  to an integral Hecke eigensystem in $\H^{*}_{\et}(\Sh_{\overline{\Q}_p},V(a,b)_{\check{\Z}_p})_{\mathfrak{m}}$.
  Then
  \begin{equation} \label{BGG}
  \H^3_{\text{\'et}}(\Sh_{\overline{\Q}_p},V(a,b)_{\overline{\Q}_p})_{\tilde{\mathfrak{m}}}=
  \oplus_{w \in W^{M}} \H^{l(w)}(\Sh^{\tor}_{\overline{\Q}_p},\omega(w \cdot (a,b)+(3,3)))_{\tilde{\mathfrak{m}}}
  \end{equation}
  by
  \cite{faltings-chai}, and the rational 
  \'etale-crystalline comparison theorem. Moreover, since the Galois 
  representation attached to $\tilde{\mathfrak{m}}$ is irreducible we have that the non-vanishing of the left-hand side 
  implies the non-vanishing of any of $\H^{l(w)}(\Sh^{\tor}_{\overline{\Q}_p},\omega(w \cdot (a,b)+(3,3)))_{\tilde{\mathfrak{m}}}$.
  This follows from the fact in the \'etale cohomology localized at such $\tilde{\mathfrak{m}}$ 
  (concretely in the case where the underlying 
  automorphic representation is not CAP, which have reducible
   Galois representations \cite[Thm 2]{Galois-reps}) one only sees contributions from automorphic representations
   $\pi$
   whose 
   components at infinity belong to the discrete series \cite[\S 4.1]{weissauer}.
   Then \cite[Prop 1.5]{Galois-reps} (using that weakly endoscopic representations 
   are Eisenstein) implies that the multiplicity of 
   the (anti)holomorphic discrete series $\pi_{\infty, \text{hol}}$ 
   equals the one of the non-holomorphic discrete series $\pi_{\infty,\text{n-hol}}$ \cite[\S 1.1]{weissauer}. 
   Moreover, the multiplicity of $\pi_{\infty, \text{hol}}$ contributes to 
   coherent cohomology in degree $0$ and $3$, and the multiplicity of $\pi_{\infty, \text{n-hol}}$ 
   contributes to coherent cohomology in degree $1$ and $2$.
  By rescaling an eigenclass to land on
  $\H^{l(w)}(\Sh^{\tor}_{\mathcal{O}_{\overline{\Q}_p}},\omega(w \cdot (a,b)+(3,3)))_{\tilde{\mathfrak{m}}}$ and reducing mod $p$
  we get one implication.  
  For the other direction, 
  by \Cref{lan-suh}(1) $\H^1(\Shbar^{\tor},\omega^{\sub}(a+3,b+3))=0$
  under the restrictions on the weights, so by the second equality we can lift eigensystems in 
  $\H^0(\Sh^{\tor}_{\overline{\F}_p},\omega^{\can}(a+3,b+3))_{\mathfrak{m}}$
  to $\H^0(\Sh^{\tor}_{\check{\Z}_p},\omega^{\can}(a+3,b+3))_{\mathfrak{m}}$, 
  which is moreover torsion-free. Using \eqref{BGG} and reducing mod $p$ implies what we want. 
\end{proof}
\end{prop}

We can now explain how \Cref{geometric-entailment} proves a generic entailment of Serre weights.
Given $\lambda_0=(a,b) \in C_0$ let $\lambda_1 =(p-b-3,p-a-3) \in C_1$ and $\lambda_2=(p+b-1,p-a-3) \in C_2$
be the corresponding affine Weyl reflections. 
\begin{theorem} \label{entailment}
Let $\lambda_0=(a,b) \in C_0$ satisfying $b \ge 1$, $p \ge 5$, and $\mathfrak{m} \subset \mathbb{T}$ a non-Eisenstein generic eigensystem
such that $F(\lambda_0) \in W(\overline{\rho}_{\mathfrak{m}})$.  Then $F(\lambda_1) \in W(\overline{\rho}_{\mathfrak{m}})$ or 
$F(\lambda_2) \in W(\overline{\rho}_{\mathfrak{m}})$.
\begin{proof}
  By \Cref{Weyl-decomposition} we have the exact sequence
  $$
  0 \to F(\lambda_1) \to V(\lambda_1)_{\Fpbar} \to F(\lambda_0) \to 0
  $$ in
  $\Rep_{\overline{\F}_p}\gsp_4(\F_p)$.
 By the concentration of \'etale cohomology in \Cref{coherent-to-betti} we have that
 $\H^3_{\text{\'et}}(\Sh_{\overline{\Q}_p},V(\lambda_1)_{\Fpbar})_{\mathfrak{m}} \neq 0$, 
 which implies that $\H^0(\Shbar,\omega(p-b,p-a))_{\mathfrak{m}} \neq 0$.
 Assuming that $b \ge 1$ 
 the map $\theta^1_{(p-b,p-a)}$ is injective on global sections localized at $\mathfrak{m}$ by \Cref{geometric-entailment},
 so that
 $\H^0(\Shbar,\omega(p+b+2,p-a))_{\mathfrak{m}} \neq 0$. By applying \Cref{coherent-to-betti} again we obtain
 $\H^3_{\text{\'et}}(\Sh_{\overline{\Q}_p},V(\lambda_2)_{\F_p})_{\mathfrak{m}} \neq 0$.
 By \Cref{Weyl-decomposition} 
 $$
 0 \to F(\lambda_2) \to V(\lambda_2)_{\Fpbar} \to F(\lambda_1) \to 0
 $$
 is exact, so we conclude by taking its long exact sequence in \'etale cohomology. 
\end{proof}
\end{theorem}

\subsection{Some non-generic entailments}
We can also prove some non-generic entailments using $H_1$ and $\theta_1$.
\begin{theorem} \label{non-generic}
Let $\mathfrak{m}$ be non-Eisenstein and generic, and $p\ge 5$. Suppose that $F(a,0) \in W(\overline{\rho}_{\mathfrak{m}})$ for some 
$0 \le a \le p-1$. Then 
\begin{enumerate}
\item $F(a+p-1,p-1) \in W(\overline{\rho}_{\mathfrak{m}})$. 
\item If $a \le p-4$ then $F(a+p+1,p-1) \in W(\overline{\rho}_{\mathfrak{m}})$.
\item Let $\lambda_0=(a,b) \in X_1(T)$ such that $a+b=p-3$, $b \neq 0$, and $F(a,b) \in W(\overline{\rho}_{\mathfrak{m}})$. Then 
  $F(2p-a-4,b) \in W(\overline{\rho}_{\mathfrak{m}})$.
\end{enumerate}
\begin{proof}
By \Cref{coherent-to-betti} we have $\H^0(\Shbar,\omega(a+3,3))_{\mathfrak{m}} \neq 0$. Applying $H_{1}$ yields 
$\H^0(\Shbar,\omega(a+p+2,p+2))_{\mathfrak{m}} \neq 0$, so we get $1)$ by the converse direction of \Cref{coherent-to-betti}.
Similarly, we have that $\theta_1$ is injective on $\H^0(\Shbar,\omega(a+3,3))$: if a form is in its kernel it must be 
divisible by $H_1$ by \Cref{divisibility-properties}, and $\H^0(\Shbar,\omega(a-p+4,4-p))_{\mathfrak{m}}=0$ by \Cref{lan-suh}(2). 
Therefore, $\H^0(\Shbar,\omega(a+p+4,p+2))_{\mathfrak{m}} \neq 0$, and we conclude by \Cref{coherent-to-betti}(2).
For $(3)$ we still have that $\theta^1_{(a+3,b+3)}$ is injective 
on $\H^0(\Shbar,\omega(a+3,b+3))$, since we can reduce to injectivity of $\theta_1$ by
\Cref{geometric-entailment}, which follows by \Cref{divisibility-properties} and \Cref{lan-suh}(2).
In this case $V(2p-a-4,b)_{\Fpbar}=F(2p-a-4,b)$ by \Cref{Weyl-decomposition}, which produces the entailment. 
\end{proof}
\end{theorem}

Point $(3)$ is a boundary case of \Cref{entailment}, where $\lambda_0$ is in the boundary of $C_0$ and $C_1$.
One would also expect the entailment $F(a,a) \in W(\overline{\rho}_{\mathfrak{m}}) \implies F(a+p-1,a) \in W(\overline{\rho}_{\mathfrak{m}})$, 
but it seems more difficult to obtain with our techniques, since $\theta_4$ is identically zero on 
$\H^0(\Shbar,\omega(a+3,a+3))$.

\bibliographystyle{alpha}
\bibliography{bib}

\end{document}